\definecolor{bwgreen}{rgb}{0.01,0.10,0.25}%{0.183,1,0.5}
\definecolor{bwmagenta}{rgb}{0.25,0.0,0.1}
\definecolor{bwblue}{rgb}{0.317,0.161,1}
\newtheorem{theo}{Theorem}[section]
\newtheorem{cor}[theo]{Corollary}
\newtheorem{lemma}[theo]{Lemma}
\newtheorem{remark}[theo]{Remark}
\newtheorem{proposition}[theo]{Proposition}
\newtheorem{hyp}{Hypothesis}
\newtheorem*{main}{Theorem A}
\newtheorem*{main+}{Theorem B}
\newtheorem*{remarki}{Remark}
\newcommand{\quo}[1]{ \Z/p^{n}\Z  }
\newcommand{\iw}{\Lambda}
\newcommand{\fre}[1]{\stackrel{#1}{\rightarrow}}
\DeclareSymbolFont{cyrletters}{OT2}{wncyr}{m}{n}
\DeclareMathSymbol{\sha}{\mathalpha}{cyrletters}{"58}
\newcommand{\inlim}{\mathop{\varprojlim}\limits}
\newcommand{\dia}[1]{\left<{}#1\right>}
\newcommand{\lri}[1]{\left(#1\right)}
\newcommand{\rank}{\mathrm{rank}}
\newcommand{\ctsb}{C_{\mathrm{cont}}^{\bullet}}
\newcommand{\scob}{\widetilde{C}_{f}^{\bullet}}
\newcommand{\derco}{\widetilde{\mathbf{R}\Gamma}_{f}}
\newcommand{\exsel}{\widetilde{H}_f}
\newcommand{\divp}{\Q_p/\Z_p}
\newcommand{\Hom}[1]{\mathrm{Hom}_{#1}}
\newcommand{\xari}{\mathcal{X}^{\mathrm{arith}}}
\newcommand{\derot}[1]{\otimes_{#1}^{\mathbf{L}}}
\newcommand{\neko}{Nekov\'a\v{r}}
\newcommand{\dercts}{\mathbf{R}\Gamma_{\mathrm{cont}}}
\newcommand{\ppq}{\mathbb{T}_{\mathbf{f}}}
\newcommand{\I}{\mathbb{I}}
\newcommand{\ppql}{\mathbb{T}_{\mathbf{f},\mathfrak{p}_{f}}}
\newcommand{\Il}{\I_{\mathfrak{p}_{f}}}
\newcommand{\ppqll}{\mathbb{T}_{\mathbf{f},\mathfrak{p}_{f}}}
\newcommand{\N}{\mathbf{N}}
\newcommand{\Z}{\mathbf{Z}}
\newcommand{\Q}{\mathbf{Q}}
\newcommand{\C}{\mathbf{C}}
\newcommand{\F}{\mathbf{F}}
\author{Rodolfo Venerucci}
\begin{document}
%\maketitle 

\title{On the $p$-converse of the Kolyvagin-Gross-Zagier theorem}
\maketitle

\vspace{-6mm}

\begin{center}
\small{Universität Duisburg-Essen \\
Fakultät für Mathematik,
Mathematikcarrée \\  
Thea-Leymann-Stra\ss{}e 9, 45127 Essen. \\
\textit{E-mail address}: \texttt{rodolfo.venerucci@uni-due.de}
}\end{center}

%\lhead{R. Venerucci}
%\rhead{On the $p$-converse of KGZ}

\begin{abstract} Let $A/\Q$ be an elliptic curve having split multiplicative reduction at an odd prime $p$.
Under some mild technical assumptions, we prove the statement: 
\[
                       \rank_{\Z}A(\Q)=1\ \text{and\ }\ \#\big(\sha(A/\Q)_{p^{\infty}}\big)<\infty\ \ 
                       \implies\ \ \mathrm{ord}_{s=1}L(A/\Q,s)=1,
\]
thus providing a `$p$-converse' to a celebrated theorem of Kolyvagin-Gross-Zagier. 
\end{abstract}
\setcounter{tocdepth}{1}
\tableofcontents

\section*{Introduction}

Let $A$ be an elliptic curve defined over $\Q$, let $L(A/\Q,s)$ be its Hasse-Weil $L$-function, and let
$\sha(A/\Q)$ be its Tate-Shafarevich group.  
The (weak form of the) conjecture of Birch and Swinnerton-Dyer predicts that $\sha(A/\Q)$ is finite, and that 
the order of vanishing $\mathrm{ord}_{s=1}L(A/\Q,s)$ of $L(A/\Q,s)$ at $s=1$
equals the rank of the Mordell-Weil group $A(\Q)$.
The main result to date in support of this conjecture comes combining the 
fundamental work of Kolyvagin \cite{Koles} and Gross-Zagier \cite{Gross-Zagier} (KGZ theorem for short):
\[
          r_{\mathrm{an}}:=\mathrm{ord}_{s=1}L(A/\Q,s)\leq{}1\ \ \Longrightarrow{} \ \    
                 \mathrm{rank}_{\Z}A(\Q)=r_{\mathrm{an}} \ \ \text{and} \ \ \#\big(\sha(A/\Q)\big)<\infty.
\]

Let $p$ be a rational prime, let $r_{\mathrm{alg}}\in{}\{0,1\}$, and let $\sha(A/\Q)_{p^{\infty}}$ be the $p$-primary part of $\sha(A/\Q)$.
By the  \emph{$p$-converse of the KGZ theorem in rank $r_{\mathrm{alg}}$} we mean the conjectural statement
\[
       \mathrm{rank}_{\Z}A(\Q)=r_{\mathrm{alg}}\ \ \mathrm{and\ }\  \#\big(\sha(A/\Q)_{p^{\infty}}\big)<\infty\ \ 
       \stackrel{? \ }{\Longrightarrow{}} \ \ \mathrm{ord}_{s=1}L(A/\Q,s)=r_{\mathrm{alg}}.
\]
Thanks to the fundamental work  of Bertolini-Darmon, Skinner-Urban and their schools, we have now
(at least conceptually) all the necessary tools to attack the $p$-converse of the KGZ theorem.
Notably, assume that $p$ is a prime of \emph{good ordinary reduction} for $A/\Q$. In this case the $p$-converse of the KGZ theorem in rank $0$ follows 
by \cite{S-U}. In the preprint \cite{Skinner}, Skinner combines Wan's  Ph.D. Thesis \cite{Wan} $-$which proves,
following the ideas and the strategy used in \cite{S-U}, one divisibility in the Iwasawa main conjecture 
for Rankin-Selberg $p$-adic $L$-functions$-$
with the main results of \cite{Be-Da-Pr} and  Brooks's Ph.D. Thesis \cite{Brooks} $-$extending the results of \cite{Be-Da-Pr}$-$
to  prove many cases of the $p$-converse of the KGZ theorem  in rank $1$.
In the preprint \cite{WeiZ}, W. Zhang also proves (among other things) many cases of the $p$-converse of the KGZ theorem in rank $1$
for good  ordinary primes, combining the results of \cite{S-U}
with the results and ideas presented in Bertolini-Darmon's proof of (one divisibility in) the anticyclotomic main conjecture \cite{Be-Da-main}.
The same strategy also appears  in Berti's forthcoming Ph.D. Thesis \cite{Berti}
(see also \cite{BBV}).

The aim of this note is to prove the $p$-converse of the KGZ theorem in rank $1$ for  a prime $p$
of \emph{split multiplicative reduction} for $A/\Q$.
Our strategy is different from both the one of \cite{Skinner} and the one of  \cite{WeiZ}, and is based on the (two-variable) Iwasawa 
theory for the Hida deformation of the $p$-adic Tate module of $A/\Q$.
Together  with the results of the author's Ph.D. Thesis \cite{PhD}, and then \neko's theory of Selmer Complexes \cite{Ne}
(on which the results of \cite{PhD} rely),
the key ingredients in our approach are represented by the main results of \cite{B-D} and \cite{S-U}
(see the outline of the proof given below for more details). \footnote{After this note was written, C. Skinner communicated to the author 
that, together with W. Zhang, he extended the methods of \cite{WeiZ} to obtain (among other results) the  $p$-converse of the KGZ theorem 
in cases where  $p$ is a prime of multiplicative reduction \cite{SkinnerZhang}. While there is an overlap between 
the main result of this note and the result of Skinner-Zhang, neither subsumes the other
(cf. the end of this section). Moreover, as remarked above,
the methods of proof are substantially different.    }

\subsection*{The main result} Let $A/\Q$ be an elliptic curve having \emph{split} multiplicative reduction at an \emph{odd}
rational prime $p$. Let $N_{A}$ be the conductor of $A/\Q$,
let $j_{A}\in{}\Q$ be its $j$-invariant, and let 
$\overline{\rho}_{A,p} : G_{\Q}\fre{}\mathrm{GL}_{2}(\F_p)$  be (the isomorphism class of) the 
representation of $G_{\Q}$ on the  
$p$-torsion submodule $A[p]$ of $A(\overline{\Q})$.

\begin{main}\label{mainth} Let $A/\Q$ and $p\not=2$ be as above. Assume in addition that the following properties hold:
\begin{itemize}
\item[1.] $\overline{\rho}_{A,p}$ is irreducible; 
\item[2.] there exists a prime $q\Vert{}N_{A}$, $q\not=p$ such that $p\nmid{}\mathrm{ord}_{q}(j_{A})$;
\item[3.] $\mathrm{rank}_{\Z}A(\Q)=1$ and $\sha(A/\Q)_{p^{\infty}}$ is finite.
\end{itemize}
Then the Hasse-Weil $L$-function  $L(A/\Q,s)$ of $A/\Q$ has a simple zero at $s=1$.
\end{main}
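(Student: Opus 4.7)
The plan is to exploit the two-variable Iwasawa theory for the Hida family containing the modular form $f_A$ attached to $A/\Q$. Because split multiplicative reduction is $p$-ordinary, hypothesis 1 guarantees a unique Hida family $\mathbf{f}$ through $f_A$; I will write $\mathbb{T}$ for the associated big Galois representation deforming $T_p(A)$, $\mathcal{R}$ for its two-variable coefficient ring (a finite extension of the product of the weight and cyclotomic Iwasawa algebras), $\lambda \colon \mathcal{R} \twoheadrightarrow \Z_p$ for the arithmetic specialization recovering $T_p(A)$ at weight two and trivial cyclotomic character, and $L_p(\mathbf{f}) \in \mathcal{R}$ for the Mazur-Kitagawa two-variable $p$-adic $L$-function, whose specialization at $\lambda$ recovers $L_p(A/\Q, s)$ at $s = 1$.

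The strategy is to pin down the order of vanishing of $L_p(\mathbf{f})$ at $\lambda$ by matching an algebraic upper bound with an analytic lower bound. On the algebraic side I will use \neko's formalism of Selmer complexes, as developed in the author's Ph.D.~thesis, to define an extended Selmer module $\widetilde{H}^1_f(\Q, \mathbb{T})$ and a dual Selmer module over $\mathcal{R}$. Hypothesis 3, via descent from $\mathbb{T}$ to $T_p(A)$, forces the weight-two specialization of $\widetilde{H}^1_f$ to have $\Z_p$-rank one, with generator furnished by a non-torsion point $P \in A(\Q)$; combining this control with the one-sided divisibility in the Rankin-Selberg Iwasawa main conjecture of Skinner-Urban \cite{S-U} then yields a sharp upper bound on the order of vanishing of $L_p(\mathbf{f})$ at $\lambda$.

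On the analytic side, a Bertolini-Darmon-type derivative formula \cite{B-D} computes the corresponding leading coefficient of $L_p(\mathbf{f})$ at $\lambda$ as a non-zero algebraic constant times $\mathcal{L}(A) \cdot L'(A/\Q, 1)/\Omega_A$, where $\mathcal{L}(A) = \log_p(q_A)/\mathrm{ord}_p(q_A) \neq 0$ is the Tate $\mathcal{L}$-invariant attached to the Tate period $q_A$. Kato's divisibility applied under hypothesis 3 gives $L(A/\Q, 1) = 0$, so the cyclotomic exceptional zero together with the Greenberg-Stevens formula already produces a matching lower bound on the order of vanishing of $L_p(\mathbf{f})$ at $\lambda$. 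Combining the two bounds pins the vanishing order down exactly, forcing the leading coefficient to be non-zero, and hence $L'(A/\Q, 1) \neq 0$ via the derivative formula; together with $L(A/\Q, 1) = 0$ this gives $\mathrm{ord}_{s=1} L(A/\Q, s) = 1$.

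The hardest part will be the precise matching of two independent sources of vanishing at $\lambda$: the cyclotomic exceptional zero from the split multiplicative Euler factor at $p$, and the rank-one zero in the weight direction coming from the non-trivial Selmer element lifting $P$. Separating these contributions, and ruling out the spurious extra vanishing that would weaken the main conjecture bound, requires \neko's height-pairing machinery for extended Selmer complexes. Hypothesis 2, which provides a ramified bad prime $q \ne p$ with $p \nmid \mathrm{ord}_q(j_A)$, will enter at this stage to ensure the non-degeneracy of the relevant local conditions and the freeness of certain local cohomologies, so that the characteristic-ideal computation remains clean and the two bounds can genuinely be matched.
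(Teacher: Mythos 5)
Your high-level outline — deform $T_p(A)$ along a Hida family, bound the order of vanishing of a $p$-adic $L$-function from below by exceptional-zero considerations and from above by Skinner-Urban plus a Selmer-group computation via \neko's theory, and then convert the equality into a statement about $L(A/\Q,s)$ — does match the paper's strategy. But two of your key mechanisms are not available in the form you state them, and both failures point to the same missing ingredient: the paper's proof runs fundamentally over an auxiliary imaginary quadratic field $K/\Q$, never over $\Q$ alone, and you have suppressed this. Skinner-Urban's divisibility is a theorem about a three-variable Rankin-Selberg $p$-adic $L$-function over an imaginary quadratic field $K$ in which $p$ splits, subject to a Heegner-type condition on $N^-$ and a ramification hypothesis on $\overline\rho_{A,p}$; there is no version over $\Q$ you can just cite. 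Consequently the paper first uses Friedberg-Hoffstein (plus \neko's parity theorem) to produce a $K$ with $N^- = q$ and $\mathrm{ord}_{s=1}L(A^K/\Q,s)=1$, invokes KGZ to get rank one and finite $p$-primary $\sha$ for $A^K/\Q$, and only then can carry out the two-sided bound — separately on the $\chi_{\mathrm{triv}}$ and $\epsilon_K$ eigencomponents of the Selmer module over $K$, obtaining the chain $4 \leq \mathrm{ord}_{k=2}L_p^{\mathrm{cc}}(f_\infty/K,k) \leq \mathrm{length}_{\mathfrak{p}_f}(X_{\mathrm{Gr}}^{\mathrm{cc}}(\mathbf{f}/K))+2 = 4$.

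Your analytic input is also misstated. The Bertolini-Darmon exceptional-zero formula is a \emph{weight-direction} second-derivative formula along the central critical line $s=k/2$, and its leading term is $\log_A^2(\mathbf P_\chi)$ for a Heegner point $\mathbf P_\chi\in A(K_\chi)^\chi$ — not $\mathcal L(A)\cdot L'(A/\Q,1)/\Omega_A$. The passage from non-vanishing of $\log_A(\mathbf P_\chi)$ to non-vanishing of $L'$ is precisely Gross-Zagier, which again forces the base change; in the paper this appears as the equivalence $\mathrm{ord}_{k=2}L_p^{\mathrm{cc}}(f_\infty,\chi,k)=2\iff\mathrm{ord}_{s=1}L(A^\chi/\Q,s)=1$, applied to both $\chi$'s and combined with the factorization $L(A/K,s)=L(A/\Q,s)L(A^K/\Q,s)$ to finish. (The formula you wrote resembles the rank-one Mazur-Tate-Teitelbaum statement, which is itself a \emph{consequence} of the present machinery together with Bertolini-Darmon and Gross-Zagier — citing it here would be circular.) Finally, hypothesis 2 is not about non-degeneracy of local conditions or freeness of local cohomology: it enters via Tate's $q$-expansion to show that $\overline\rho_{A,p}$ is ramified at $q$, verifying the ramification hypothesis in Skinner-Urban's main conjecture with $N^-=q$; the non-degeneracy you need is that of \neko's half-twisted weight pairing, and it is forced instead by the explicit formula $\dia{q_\chi,P_\chi}^{\mathrm{Nek},\chi}_{V_f,\pi}\doteq\log_A(P_\chi)\neq 0$, which uses only that $P_\chi$ has infinite order.
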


Combined with the KGZ theorem recalled above, this  implies:

\begin{main+} Let $A/\Q$ be an elliptic curve having split multiplicative reduction at an odd rational  prime $p$.
Assume that $\overline{\rho}_{A,p}$ is irreducible, and that there exists a prime $q\Vert{}N_{A}$, $q\not=p$ such that $p\nmid{}\mathrm{ord}_{q}(j_{A})$.
Then
\[
                   \mathrm{ord}_{s=1}L(A/\Q,s)=1 \ \ \iff \ \ \mathrm{rank}_{\Z}A(\Q)=1\ \mathrm{and}
                   \ \#\big(\sha(A/\Q)_{p^{\infty}}\big)<\infty.
\]
If this is the case, the whole  Tate-Shafarevich group $\sha(A/\Q)$ is finite. 
\end{main+}

\subsection*{Outline of the proof} Let $A/\Q$ be an elliptic curve having split multiplicative reduction 
at a prime $p\not=2$, and let $f=\sum_{n=1}^{\infty}a_{n}q^{n}
\in{}S_{2}(\Gamma_{0}(N_{A}),\Z)^{\mathrm{new}}$ be the weight-two newform attached to $A$
by the modularity theorem of Wiles, Taylor-Wiles \emph{et. al.} Then $N_{A}=Np$, with $p\nmid{}N$
and $a_{p}=a_{p}(A)=+1$. Assume that $\overline{\rho}_{A,p}$ is irreducible.

Let $\mathbf{f}=\sum_{n=1}^{\infty}\mathbf{a}_{n}q^{n}\in{}\I\llbracket{}q\rrbracket$ be the \emph{Hida family}
passing through $f$. %Referring to the precise definitions given below, we content ourselves here to 
%quote that 
Here $\I$ is a normal local domain, finite and flat over Hida's weight algebra $\iw:=\mathcal{O}_{L}\llbracket{}\Gamma\rrbracket$
with $\mathcal{O}_{L}$-coefficients, where $\Gamma:=1+p\Z_{p}$ and $\mathcal{O}_{L}$
is the ring of integers of a (sufficiently large) finite extension $L/\Q_{p}$ (cf. Section $\ref{hifa}$). There is 
a natural injective morphism (Mellin transform) $\mathtt{M} : \I\hookrightarrow{}\mathscr{A}(U)$,
where $U\subset{}\Z_{p}$ is a suitable $p$-adic neighbourhood of $2$, and $\mathscr{A}(U)\subset{}L\llbracket{}k-2\rrbracket$
denotes the sub-ring of formal power series in $k-2$ which converge  in $U$
(see Section $\ref{lochidafam}$). Write $$f_{\infty}:=\sum_{n=1}^{\infty}a_{n}(k)\cdot{}q^{n}
\in{}\mathscr{A}(U)\llbracket{}q\rrbracket,$$ with $a_{n}(k)\in{}\mathscr{A}(U)$ defined as the image of $\mathbf{a}_{n}\in\I$
under $\mathtt{M}$. 
For every \emph{classical point} $\kappa\in{}U^{\mathrm{cl}}:=U\cap{}\Z^{\geq{}2}$,
the weight-$\kappa$-specialization $f_{\kappa}:=\sum_{n=1}^{\infty}a_{n}(\kappa)q^{n}$ is the $q$-expansion of a normalised 
Hecke eigenform of weight $\kappa$ and level $\Gamma_{1}(Np)$; moreover $f_{2}=f$.
For every quadratic character $\chi$ of conductor coprime with $Np$, a construction of Mazur-Kitagawa and Greenberg-Stevens \cite[Section 1]{B-D} attaches to 
$f_{\infty}$ and $\chi$   a two-variable $p$-adic analytic $L$-function $L_{p}(f_{\infty},\chi,k,s)$ on $U\times{}\Z_{p}$,
interpolating the special complex $L$-values $L(f_{\kappa},\chi,j)$, where $\kappa\in{}U^{\mathrm{cl}}$, $1\leq{}j\leq{}\kappa-1$
and $L(f_{\kappa},\chi,s)$ is the Hecke $L$-function of $f_{\kappa}$ twisted by $\chi$.
(Here $s$ is the \emph{cyclotomic variable}, and $k$ is the \emph{weight-variable}.)
Define the \emph{central critical $p$-adic $L$-function of $(f_{\infty},\chi)$}:
\[
             L_{p}^{\mathrm{cc}}(f_{\infty},\chi,k):=L_{p}(f_{\infty},\chi,k,k/2)\in{}\mathscr{A}(U)
\]
as the restriction of  the Mazur-Kitagawa $p$-adic $L$-function to the \emph{central critical line $s=k/2$} in the $(k,s)$-plane.

On the algebraic side, Hida theory attaches to $\mathbf{f}$ a \emph{central critical deformation $\ppq$} of the $p$-adic Tate module of 
$A/\Q$. $\ppq$ is a free rank-two $\I$-module, equipped with a continuous, $\I$-linear action of $G_{\Q}$,
satisfying the following interpolation property.
For every classical point $\kappa\in{}U^{\mathrm{cl}}$ (s.t. $\kappa\equiv{}2\ \mathrm{mod}\ 2(p-1)$) 
the base change $\mathbb{T}_{\mathbf{f}}\otimes_{\I,\mathrm{ev}_{\kappa}}L$ is isomorphic to the central critical twist 
$V_{f_{\kappa}}(1-\kappa/2)$ of the contragredient of the $p$-adic Deligne representation $V_{f_{\kappa}}$
of $f_{\kappa}$, where $\mathrm{ev}_{\kappa} : \I\hookrightarrow{}\mathscr{A}(U)\fre{}L$ denotes the morphism 
induced by evaluation at $\kappa$ on $\mathscr{A}(U)$. Moreover, $\mathbb{T}_{\mathbf{f}}$ is \emph{nearly-ordinary} at $p$. More precisely, let $v$ be a prime of $\overline{\Q}$ dividing $p$,
associated with an embedding $i_{v} : \overline{\Q}\hookrightarrow{}\overline{\Q}_{p}$,
and denote by  
$i_{v}^{\ast} : G_{\Q_{p}}\cong{}G_{v}\subset{}G_{\Q}$ the corresponding  decomposition group at $v$.
Then there is a short exact sequence of $\I[G_{v}]$-modules 
\[
       0\fre{}\mathbb{T}^{+}_{\mathbf{f},v}\fre{}\mathbb{T}_{\mathbf{f}}\fre{}\mathbb{T}_{\mathbf{f},v}^{-}\fre{}0,
\]
with $\mathbb{T}_{\mathbf{f},v}^{\pm}$
 free of rank one over $\I$.
For every number field $F/\Q$,  define the  (strict) Greenberg Selmer group
\[
        \mathrm{Sel}_{\mathrm{Gr}}^{\mathrm{cc}}(\mathbf{f}/F):=\ker\lri{
        H^{1}(G_{F,S},\ppq\otimes_{\I}\I^{\ast})\longrightarrow{}\prod_{v|p}H^{1}(F_{v},\mathbb{T}_{\mathbf{f},v}^{-}\otimes_{\I}\I^{\ast})}.
\]
Here $S$ is a finite set of primes of $F$ containing every prime divisor of  $N_{A}\mathrm{disc}(F)$, $G_{F,S}$ 
is the Galois group of the maximal algebraic extension of $F$
which is unramified outside $S\cup{}\{\infty\}$, 
$\I^{\ast}:=\Hom{\mathrm{cont}}(\I,\divp)$ is the Pontrjagin dual of $\I$, and the product 
runs over all the primes $v$ of $F$ which divide $p$
\footnote{$\mathrm{Sel}_{\mathrm{Gr}}^{\mathrm{cc}}(\mathbf{f}/F)$ depends on the choice of the set $S$,
even if this dependence is irrelevant for the purposes of this introduction.}. Write
\[
                   X_{\mathrm{Gr}}^{\mathrm{cc}}(\mathbf{f}/F):=\Hom{\Z_{p}}\Big(\mathrm{Sel}_{\mathrm{Gr}}^{\mathrm{cc}}(\mathbf{f}/F),
                   \divp\Big)
\]
for the Pontrjagin dual of $\mathrm{Sel}_{\mathrm{Gr}}^{\mathrm{cc}}(\mathbf{f}/F)$. It is a finitely generated $\I$-module.
We now explain the  main steps entering in the proof of Theorem A.

\subsubsection*{Step I: Skinner-Urban's divisibility.} Let $K/\Q$ be an imaginary quadratic field in which $p$ \emph{splits}.
Assume that the discriminant of $K/\Q$ is coprime to $N_{A}$, and write $N_{A}=N^{+}N^{-}$,
where $N^{+}$ (resp., $N^{-}$) is divided precisely by the prime divisors of $N_{A}$ which are split (resp., inert) in $K$.
Assume the following \emph{generalised Heegner hypothesis} and \emph{ramification hypothesis}: \vspace{2mm}

$\bullet$ $N^{-}$ is a square-free product of an \emph{odd} number of primes. \vspace{2mm}

$\bullet$ $\overline{\rho}_{A,p}$ is ramified at all prime divisors of $N^{-}$.\vspace{2mm}\\
Under some additional  technical hypotheses
on the data $(A,K,p,\dots)$ (cf.  Hypotheses $\ref{h2}$, $\ref{h3}$ and $\ref{h4}$ below), the main result of \cite{S-U},
together with some auxiliary computations, allows us to deduce the following inequality:
\begin{equation}\label{eq:suintro}
              \mathrm{ord}_{k=2}L_{p}^{\mathrm{cc}}(f_{\infty}/K,k)\leq{}
              \mathrm{length}_{\mathfrak{p}_{f}}\Big(X_{\mathrm{Gr}}^{\mathrm{cc}}(\mathbf{f}/K)\Big)+2.
\end{equation}
Here $L_{p}^{\mathrm{cc}}(f_{\infty}/K,k):=L_{p}^{\mathrm{cc}}(f_{\infty},\chi_{\mathrm{triv}},k)\cdot{}L_{p}^{\mathrm{cc}}(f_{\infty},\epsilon_{K},k)$,
where $\chi_{\mathrm{triv}}$ is the trivial character and $\epsilon_{K}$ is the quadratic character attached to $K$.
$\mathfrak{p}_{f}:=\ker\big(\mathrm{ev}_{2} : \I\hookrightarrow{}\mathscr{A}(U)\rightarrow{}L\big)$
is the kernel of the morphism induced by evaluation at $k=2$ on $\mathscr{A}(U)$; it is a height-one prime ideal of  $\I$,
so that the localisation $\Il$ is a discrete valuation ring.
Finally, $\mathrm{length}_{\mathfrak{p}_{f}}(M)$ denotes the length over $\Il$ of the localisation $M_{\mathfrak{p}_{f}}$,
for every finite $\I$-module $M$.

\begin{remarki}\emph{
The main result of  Skinner and Urban \cite{S-U} mentioned above, which 
proves one divisibility in a three variable main conjecture for $\mathrm{GL}_{2}$,
is a result \emph{over $K$}, for $K/\Q$ as above, and not over $\Q$. This is why we need to consider a base-change to 
such a $K/\Q$ in our approach to Theorem A. }
\end{remarki}

\begin{remarki}\emph{ By assumption, $A/\Q$ has split multiplicative reduction at $p$,
and as well-known this implies that $L_{p}(f_{\infty},\chi_{\mathrm{triv}},k,s)$ has a trivial zero at $(k,s)=(2,1)$
in the sense of \cite{M-T-T}. Moreover, the hypothesis $\epsilon_{K}(p)=+1$ (i.e. $p$ splits in $K$) implies that $L_{p}(f_{\infty},\epsilon_{K},k,s)$
also 
has such an exceptional zero at $(k,s)=(2,1)$ (see, e.g. \cite[Section 1]{B-D}). 
This is the reason behind the appearance of the addend $2$ on the R.H.S. of $(\ref{eq:suintro})$.
}
\end{remarki}

\begin{remarki}\emph{The generalised Heegner hypothesis gives $\epsilon_{K}(-N_{A})=-\epsilon_{K}(N^{-})=+1$. This 
implies that the Hecke $L$-series $L(f,s)=L(A/\Q,s)$ and $L(f,\epsilon_{K},s)=L(A^{K}/\Q,s)$
(where $A^{K}/\Q$ is the quadratic twist of $A$ by $K$) have the \emph{same} sign in their functional equations 
at $s=1$. The Birch and Swinnerton-Dyer conjecture then predicts that the ranks of  $A(\Q)$ and $A^{K}(\Q)\cong{}
A(K)^{-}$ have the same parity. In particular $\mathrm{rank}_{\Z}A(K)$, and then $\mathrm{ord}_{k=2}L_{p}^{\mathrm{cc}}(f_{\infty}/K,k)$
 should be \emph{even}.}
\end{remarki}

\subsubsection*{Step II: Bertolini-Darmon's exceptional-zero formula} Let $K/\Q$
be as in Step I. Assume moreover\vspace{2mm}

$\bullet$ $\mathrm{sign}(A/\Q)=-1$\vspace{2mm}\\
where $\mathrm{sign}(A/\Q)\in{}\{\pm1\}$
denotes the sign in the functional equation satisfied by the Hasse-Weil $L$-function $L(A/\Q,s)$.
As remarked above, this implies that $\mathrm{sign}(A^{K}/\Q)=-1$ too. 
The analysis carried out in \cite{G-S} and \cite{B-D} tells us that, for both $\chi=\chi_{\mathrm{triv}}$ and $\chi=\epsilon_{K}$:
\begin{equation}\label{eq:exzerointro}
                \mathrm{ord}_{k=2}L_{p}^{\mathrm{cc}}(f_{\infty},\chi,k)\geq{}2;
\end{equation}
this is once again a manifestation of the presence of an exceptional zero at $(k,s)=(2,1)$ for the Mazur-Kitagawa $p$-adic $L$-function 
$L_{p}(f_{\infty},\chi,k,s)$.
Much more deeper,  Bertolini and Darmon proved in \cite{B-D} the formula
\[
                  \frac{d^{2}}{dk^{2}}L_{p}^{\mathrm{cc}}(f_{\infty},\chi,k)_{k=2}\stackrel{\cdot{}}{=}\log_{A}^{2}(\mathbf{P_{\chi}}),
\]
where $\stackrel{\cdot{}}{=}$ denotes equality up to a non-zero factor, $\log_{A} : A(\Q_{p})\fre{}\Q_{p}$
is the formal group logarithm, and $\mathbf{P}_{\chi}\in{}A(K)^{\chi}$ is a Heegner point.
This formula implies  that
\begin{equation}\label{eq:bdintro}
                   \mathrm{ord}_{k=2}L_{p}^{\mathrm{cc}}(f_{\infty},\chi,k)=2\ \ \iff\ \ \mathrm{ord}_{s=1}L(A^{\chi}/\Q,s)=1,
\end{equation}
i.e. if and only if the Hasse-Weil $L$-function of the $\chi$-twist $A^{\chi}/\Q$ has a simple zero at $s=1$.
(Here of course $A^{\chi}=A$ is $\chi=\chi_{\mathrm{triv}}$ and $A^{\chi}=A^{K}$ if $\chi=\epsilon_{K}$.
Recall that by assumption $L(A^{\chi}/\Q,s)$ vanishes at $s=1$.)

\subsubsection*{Step III: bounding the characteristic ideal.} Let $\chi$ denote either the trivial character or a quadratic character
of conductor coprime with $Np$, and write 
$K_{\chi}:=\Q$ or $K_{\chi}/\Q$ for the quadratic field attached to $\chi$ accordingly.
Making use of \neko's theory of Selmer Complexes (especially of \neko's generalised Cassels-Tate pairings) \cite{Ne},
we are able to relate the structure of the $\Il$-module $X_{\mathrm{Gr}}^{\mathrm{cc}}(\mathbf{f}/K_{\chi})^{\chi}_{\mathfrak{p}_{f}}:
=X_{\mathrm{Gr}}^{\mathrm{cc}}(\mathbf{f}/K_{\chi})^{\chi}\otimes_{\I}\Il$ to the properties of a suitable 
\emph{\neko's half-twisted weight pairing} (see Section $\ref{htwp}$)
\[
           \dia{-,-}_{V_{f},\pi}^{\mathrm{Nek},\chi} : A^{\dag}(K_{\chi})^{\chi}\times{}A^{\dag}(K_{\chi})^{\chi}\longrightarrow{}\Q_{p},
\]
playing here the r\^ole of the canonical cyclotomic $p$-adic height pairing of Schneider, Mazur-Tate \emph{et. al.}
in cyclotomic Iwasawa theory. Here, for every $\Z[\mathrm{Gal}(K_{\chi}/\Q)]$-module 
$M$, we write $M^{\chi}$ for the submodule of $M$ on which $\mathrm{Gal}(K_{\chi}/\Q)$ acts via $\chi$,
and $A^{\dag}(K_{\chi})$ is the \emph{extended Mordell-Weil group} of $A/K_{\chi}$ introduced in \cite{M-T-T}.
$\dia{-,-}^{\mathrm{Nek},\chi}_{V_{f},\pi}$ is a bilinear and \emph{skew-symmetric} form on $A^{\dag}(K_{\chi})^{\chi}$
(see Section $\ref{mainsec}$).
Assume that the following conditions are satisfied: \vspace{2mm}

$\bullet$ $\chi(p)=1$, i.e. $p$ splits in $K_{\chi}$;\vspace{2mm}

$\bullet$ $\rank_{\Z}A(K_{\chi})^{\chi}=1$ and $\sha(A/K_{\chi})_{p^{\infty}}^{\chi}$ is finite.\vspace{2mm}\\
Then $A^{\dag}(K_{\chi})^{\chi}\otimes\Q_{p}=\Q_{p}\cdot{}q_{\chi}\oplus{}\Q_{p}\cdot{}P_{\chi}$ is a $2$-dimensional 
$\Q_{p}$-vector space generated by a non-zero point $P_{\chi}\in{}A(K_{\chi})^{\chi}\otimes\Q$ and a certain \emph{Tate's period}
$q_{\chi}\in{}A^{\dag}(K_{\chi})^{\chi}$ (which does \emph{not} come from a $K_{\chi}$-rational point of $A$). 
In the author's Ph.D. Thesis \cite{PhD} we proved that
\begin{equation}\label{eq:explexintro}
            \dia{q_{\chi},P_{\chi}}^{\mathrm{Nek},\chi}_{V_{f},\pi}\stackrel{\cdot{}}{=}\log_{A}(P_{\chi})
\end{equation}
(where $\stackrel{\cdot{}}{=}$ denotes again equality up to a non-zero multiplicative factor), which implies that 
%\neko's pairing 
$\dia{-,-}_{V_{f},\pi}^{\mathrm{Nek},\chi}$ is non-degenerate on $A^{\dag}(K_{\chi})^{\chi}$.
Together with the results of \neko{} mentioned above,
%$-$relating the  structure  of the $\Il$-module 
%$X_{\mathrm{Gr}}^{\mathrm{cc}}(\mathbf{f}/K_{\chi})_{\mathfrak{p}_{f}}^{\chi}$
%to the properties of $\dia{-,-}^{\mathrm{Nek},\chi}_{V_{f},\pi}$$-$ 
this allows us to deduce  that 
\begin{equation}\label{eq:mmintro}
                           X_{\mathrm{Gr}}^{\mathrm{cc}}(\mathbf{f}/K_{\chi})^{\chi}_{\mathfrak{p}_{f}}\cong{}\Il/\mathfrak{p}_{f}\Il.
\end{equation}

\begin{remarki}\emph{Let $V_{f}:=\mathrm{Ta}_{p}(A)\otimes_{\Z_{p}}\Q_{p}$
be the $p$-adic Tate module of $A/\Q$, %with $\Q_{p}$-coefficients
and let 
$H^{1}_{f}(K_{\chi},V_{f})$
be the Bloch-Kato Selmer group of $V_{f}$ over $K_{\chi}$.
The pairing $\dia{-,-}_{V_{f},\pi}^{\mathrm{Nek},\chi}$ is naturally defined on \neko's \emph{extended Selmer group}
$\exsel^{1}(K_{\chi},V_{f})^{\chi}$, which is an extension of $H^{1}_{f}(K_{\chi},V_{f})^{\chi}$
by the $\Q_{p}$-module generated by $q_{\chi}$.
Indeed it is the non-degeneracy of $\dia{-,-}_{V_{f},\pi}^{\mathrm{Nek},\chi}$ on 
$\exsel^{1}(K_{\chi},V_{f})^{\chi}$ %this extended Selmer group 
to be directly related to the structure of the $\Il$-module $X_{\mathrm{Gr}}^{\mathrm{cc}}(\mathbf{f}/K_{\chi})^{\chi}_{\mathfrak{p}_{f}}$.
On the other hand, $\exsel^{1}(K_{\chi},V_{f})^{\chi}$ contains  $A^{\dag}(K_{\chi})^{\chi}\otimes{}\Q_{p}$, and equals it precisely if the $p$-primary part of $\sha(A/K_{\chi})^{\chi}$
is finite. This explains why we need the finiteness of $\sha(A/K_{\chi})^{\chi}_{p^{\infty}}$ in order to deduce $(\ref{eq:mmintro})$.}
\end{remarki}

\begin{remarki}\emph{ The length of $X_{\mathrm{Gr}}^{\mathrm{cc}}(\mathbf{f}/K_{\chi})_{\mathfrak{p}_{f}}^{\chi}$
over $\I_{\mathfrak{p}_{f}}$
can be interpreted as the order of vanishing at $k=2$ of an algebraic 
$p$-adic $L$-function $\mathbb{L}^{\mathrm{cc}}_{p}(f_{\infty},\chi,k)\in{}\mathscr{A}(U)$,
defined as the Mellin transform of the characteristic ideal of %the $\I$-module   
$X_{\mathrm{Gr}}^{\mathrm{cc}}(\mathbf{f}/K_{\chi})^{\chi}$
(at least assuming that $\I$ is regular).
The results of \neko{} briefly mentioned above
%, which relate  the structure of $X_{\mathrm{Gr}}^{\mathrm{cc}}(\mathbf{f}/K_{\chi})^{\chi}$
%at $\mathfrak{p}_{f}$ to the properties of $\dia{-,-}^{\mathrm{Nek},\chi}_{V_{f},\pi}$, 
can be used to prove 
 an  analogue in our setting of the 
algebraic $p$-adic Birch and Swinnerton-Dyer formulae of Schneider \cite{Sch} and Perrin-Riou \cite{PR},
which relates the leading coefficient of $\mathbb{L}_{p}^{\mathrm{cc}}(f_{\infty},\chi,k)$
at $k=2$ to the determinant of $\dia{-,-}^{\mathrm{Nek},\chi}_{V_{f},\pi}$,
computed on  $A^{\dag}(K_{\chi})^{\chi}/\mathrm{torsion}$.
%On the other hand, since we are interested here only  in the order of vanishing of the characteristic ideal of $X_{\mathrm{Gr}}^{\mathrm{cc}}(\mathbf{f}/K_{\chi})^{\chi}$ at $\mathfrak{p}_{f}$
%and not in its leading coefficient, 
%we use a more direct and simple argument, following by results of \cite{Ne}. 
}
\end{remarki}

\begin{remarki}\emph{Formula $(\ref{eq:explexintro})$ is crucial here. Indeed, as remarked above, it allows us to deduce the non-degeneracy of 
the weight-pairing $\dia{-,-}^{\mathrm{Nek},\chi}_{V_{f},\pi}$. The analogue of this result in cyclotomic Iwasawa theory 
(i.e. Schneider conjecture in rank-one) seems out of reach at present.}
\end{remarki}

\begin{remarki}\emph{The preceding results, and $(\ref{eq:explexintro})$ in particular, should be considered as an algebraic counterpart
of Bertolini-Darmon's exceptional zero formula (cf. Step II). This point of view is developed in \cite{Ven} (see also
Part I of the author's Ph.D. thesis 
\cite{PhD}), and leads  to the 
formulation of two-variable analogues of the Birch and Swinnerton-Dyer conjecture for 
the Mazur-Kitagawa $p$-adic $L$-function $L_{p}(f_{\infty},\chi,k,s)$. Formula $(\ref{eq:explexintro})$
$-$to be considered part of \neko's theory$-$
and  Bertolini-Darmon's exceptional zero formula, also represent crucial 
ingredients in the proof, given in \cite{Ven-2}, of the Mazur-Tate-Teitelbaum exceptional zero conjecture in rank one.
}
\end{remarki}

\subsubsection*{Step IV: conclusion of the proof} Assume that the hypotheses of Theorem A are satisfied. 
Thanks to \neko's proof of the parity conjecture \cite{Ne},  $\mathrm{sign}(A/\Q)=-1$.
By the main result 
of \cite{BFH} and hypothesis $2$ in Theorem A, we are then able to find a quadratic imaginary field 
$K/\Q$ which satisfies the hypotheses needed  in Steps I and II, with $N^{-}=q$, and such that 
$L(A^{K}/\Q,s)$ has a simple zero at $s=1$, i.e.
\begin{equation}\label{eq:jjjqqq}
      \mathrm{ord}_{s=1}L(A^{K}/\Q,s)=1.
\end{equation}
An application of the KGZ theorem gives
\[
          \mathrm{rank}_{\Z}A^{K}(\Q)=1;\ \ \#\Big(\sha(A^{K}/\Q)_{p^{\infty}}\Big)<\infty.
\]
Together with hypothesis $3$ in Theorem A, this implies that the hypotheses  needed in  Step III
are satisfied by both the trivial character $\chi=\chi_{\mathrm{triv}}$ and $\chi=\epsilon_{K}$.
Then
\[
            4\stackrel{(\ref{eq:exzerointro})\ }{\leq{}}\mathrm{ord}_{k=2}L_{p}^{\mathrm{cc}}(f_{\infty}/K,k)
            \stackrel{(\ref{eq:suintro})\ }{\leq{}}
            \mathrm{length}_{\mathfrak{p}_{f}}\Big(X_{\mathrm{Gr}}^{\mathrm{cc}}(\mathbf{f}/K)\Big)+2
            \stackrel{(\ref{eq:mmintro})}{=}4,
\]
i.e. $\mathrm{ord}_{k=2}L_{p}^{\mathrm{cc}}(f_{\infty}/K,k)=4$. Applying now Bertolini-Darmon's result
$(\ref{eq:bdintro})$ yields
\[
                \mathrm{ord}_{s=1}L(A/K,s)=2,
\]
where $L(A/K,s)=L(A/\Q,s)\cdot{}L(A^{K}/\Q,s)$ is the Hasse-Weil $L$-function of $A/K$.
Together with  $(\ref{eq:jjjqqq})$, this implies that 
$L(A/\Q,s)$ has a simple zero at $s=1$, as was to be shown.

\subsection*{Recent related results}
In the recent preprint \cite{SkinnerZhang}, Skinner and Zhang prove (among other results) a theorem similar to our Theorem A. 
More precisely, Theorem 1.1 of \emph{loc. cit.} proves  instances of the $p$-converse of the KGZ theorem in rank one, for  an elliptic curve 
with multiplicative reduction at a prime $p\geq{}5$. On the one hand, their result does not require the $p$-primary part of the Tate-Shafarevich group
to be finite, but only that  the  $p$-primary Selmer group of the elliptic curve has $\Z_{p}$-corank one.
On the other hand, together with the assumptions $1$ and $2$ of Theorem A,  the authors assume extra hypotheses in their statement.
For example, they assume that the mod-$p$ Galois representation $\overline{\rho}_{A,p}$ is not finite at $p$, that the Mazur-Tate-Teitelbaum
$L$-invariant $\mathscr{L}_{p}(A/\Q):=\frac{\log_{p}(q_{A})}{\mathrm{ord}_{p}(q_{A})}$ has $p$-adic valuation $1$
(where $q_{A}\in{}p\Z_{p}$ is the Tate period of $A/\Q_{p}$),  and require additional `$p$-indivisibility conditions'  
for the  Tamagawa factors of $A/\Q$.
(We refer to \emph{loc. cit.} for a precise list of the assumptions.) Finally, it is worth noting that our approach here
(cf. preceding Section)
is essentially different from that of \cite{SkinnerZhang}, where the authors extend the results and methods of \cite{WeiZ}
to the multiplicative setting.

\ \\
\emph{Acknowledgements.} We sincerely thank Massimo Bertolini for many  inspiring and interesting conversations,
and for his  encouragement during the preparation of this note. We thank Henri Darmon for his interest in this work.

\section[Hida Theory]{Hida Theory}
Fix for the rest of this note an elliptic curve $A/\Q$ having \emph{split} multiplicative reduction at an odd rational prime $p$.
Let  $N_{A}$ be the conductor of $A/\Q$, so that $N_{A}=Np$, with $p\nmid{}N$, and let 
$$f=\sum_{n=1}^{\infty}a_{n}q^{n}\in{}S_{2}(\Gamma_{0}(Np),\Z)^{\mathrm{new}}$$
be the weight-two newform attached to $A/\Q$
by modularity. Fix a  finite extension $L/\Q_p$, with ring of integers $\mathcal{O}_{L}$
and maximal ideal $\mathfrak{m}_{L}$,  
and an embedding $i_{p} : \overline{\Q}\hookrightarrow{}\overline{\Q}_{p}$,
under which we  identify $\overline{\Q}$ with a subfield of $\overline{\Q}_{p}$.
%algebraic numbers inside $\overline{\Q}_{p}$. 
This also fixes a decomposition group 
$i_{p}^{\ast} : G_{\Q_{p}}\hookrightarrow{}G_{\Q}$ at $p$
(where $G_{F}:=\mathrm{Gal}(\overline{F}/F)$
for every field $F$).

%Recall the representation $\overline{\rho}_{A,p} : G_{\Q}\fre{}\mathrm{GL}_{2}(\F_{p})$
%arising from the action of $G_{\Q}$ on the $p$-torsion submodule $A[p]$ of $A(\overline{\Q})$.
%We assume throughout this note:
%\begin{hyp}[$\mathbf{irr}$] \label{h2}$\overline{\rho}_{A,p}$ is (absolutely) irreducible.
%\end{hyp}
 
\subsection{The Hida family $\I$}\label{hifa} 

Let $\Gamma:=1+p\Z_{p}$, let $\Z_{N,p}^{\times}:=\Gamma\times{}\lri{\Z/pN\Z}^{\times}$,
and let 
\[
      \mathcal{O}_{L}\llbracket{}\Z_{N,p}^{\times}\rrbracket[T_{n} : n\in{}\mathbf{N}]\twoheadrightarrow{}h^o(N,\mathcal{O}_{L})
\]
be Hida's universal $p$-ordinary Hecke algebra with $\mathcal{O}_{L}$-coefficients. Writing $\iw:=\mathcal{O}_{L}\llbracket{}\Gamma\rrbracket$, 
$h^o(N,\mathcal{O}_{L})$ is a finite, flat $\iw$-algebra \cite{H-2}.
Letting $\mathscr{L}:=\mathrm{Frac}(\iw)$, there is  a decomposition %$h^o(N,\mathcal{O})=\prod_{j}h^{o}(N,\mathcal{O})_{\mathfrak{m}_{j}}$
$h^o(N,\mathcal{O}_{L})\otimes_{\iw}\mathscr{L}=\prod_{j}\mathscr{K}_{j}$ as a finite product of
%the localization of $h^o(N,\mathcal{O})$ at its maximal ideals (resp., of 
finite field extensions $\mathscr{K}_{j}/\mathscr{L}$.
Let $\mathscr{K}=\mathscr{K}_{j_{o}}$ be the \emph{primitive component} of $h^{o}(N,\mathcal{O}_{L})\otimes_{\iw}\mathscr{L}$
to which the $p$-ordinary newform $f$ \emph{belongs} \cite[Section 1]{H-2},
and let $\I$ be the integral closure of $\iw$ in the finite extension $\mathscr{K}/\mathscr{L}$.
For every $n\in{}\mathbf{N}$, write $\mathbf{a}_{n}\in{}\I$ for the image in $\I$ of the $n$th Hecke operator $T_{n}$.
By \cite[Corollary 1.5]{H-2}, there exists a unique morphism of $\mathcal{O}_{L}$-algebras
\[
              \phi_{f} : \I\longrightarrow{}\mathcal{O}_{L},
\]
such that $\phi_{f}(\mathbf{a}_{n})=a_{n}$ for every $n\in{}\mathbf{N}$; moreover, $\phi_{f}$ maps the image of 
$\Z_{N,p}^{\times}$ in $\I$ to $1$ (as $f$ has weight two and trivial neben type).
$\I$ is a normal local domain, finite and flat over Hida's weight algebra $\iw$. 
The domain $\I$ is called the \emph{(branch of the) Hida family passing through $f$}.
This terminology is justified as follows.

An \emph{arithmetic point} on $\I$ is a continuous morphism of $\mathcal{O}_{L}$-algebras $\psi : \I\fre{}\overline{\Q}_p$,
whose restriction to $\Gamma$ (with respect to the structural morphism $\iw\fre{}\I$) 
is of the form $\psi|_{\Gamma}(\gamma)=\gamma^{k_{\psi}-2}\cdot{}\chi_{\psi}(\gamma)$,
for an integer $k_{\psi}\geq{}2$ and a finite order character $\chi_{\psi}$ on $\Gamma$. 
We call $k_{\psi}$ and $\chi_{\psi}$  the \emph{weight} and \emph{(wild) character} of  $\psi$ respectively. 
 %$\chi(\gamma)^{p^{c_{\psi}}}=1$,
%where $\gamma$ is any topological generator of $\Gamma$.
Write $\xari(\I)$ for the set of arithmetic points on $\I$. Note that $\phi_{f}\in{}\xari(\I)$
is an arithmetic point of weight $2$ and trivial character.
Let  
$$\mathbf{f}=\sum_{n=1}^{\infty}\mathbf{a}_{n}\cdot{}q^{n}\in{}\I\llbracket{}q\rrbracket.$$
Then for every $\psi\in{}\xari(\I)$,
the \emph{specialisation of $\mathbf{f}$ at $\psi$}:  $$f_{\psi}:=\sum_{n=1}^{\infty}\psi(\mathbf{a}_n)\cdot{}
q^{n}\in{}S_{k_{\psi}}(\Gamma_{0}(Np^{c_{\psi}+1}),\xi_{\psi})$$
is a $p$-stabilised ordinary newform of tame level $N$, weight $k_{\psi}$ and character 
$\xi_{\psi}:=\chi_{\psi}\cdot{}\omega^{2-k_{\psi}}$.
Here 
$c_{\psi}\geq{}0$ is the smallest positive integer such that 
$\Gamma^{p^{c_{\psi}}}\subset{}\ker(\chi_{\psi})$, and  $\omega : \Z/(p-1)\Z\cong{}\F_p^{\times}
\fre{}\Z_p^{\times}$ is the Teichm\"uller character.  Moreover, we recover $f$ as the $\phi_{f}$-specialisation of $\mathbf{f}$, i.e.
\[
                 f_{\phi_{f}}:=\sum_{n=1}^{\infty}\phi_{f}(\mathbf{a}_{n})q^{n}=f.
\]
Let $\psi\in{}\xari(\I)$ be an arithmetic point. Denote by 
$K_{\psi}:=\mathrm{Frac}(\psi(\I))\subset{}\overline{\Q}_{p}$
the fraction field of $\psi(\I)$,
by $\mathfrak{m}_{\psi}$ its maximal ideal, and by
 $\F_{\psi}=\psi(\I)/\mathfrak{m}_{\psi}$ its residue field.
Let $\rho_{\psi} : G_{\Q}\fre{}\mathrm{GL}_{2}(K_{\psi})$ be the contragredient of the Deligne 
representation  associated with $f_{\psi}$,
and denote by $\overline{\rho}_{\psi} : G_{\Q}\fre{}\mathrm{Gal}(\F_{\psi})$
the semi-simplification of the reduction of $\rho_{\psi}$ modulo $\mathfrak{m}_{\psi}$.
Then $\overline{\rho}_{\psi}$ is unramified at every prime $\ell\nmid{}Np$,
and $\mathrm{Trace}(\overline{\rho}_{\psi}(\mathrm{Frob}_{\ell}))=\psi(\mathbf{a}_{\ell})\ (\mathrm{mod}\ \mathfrak{m}_{\psi})$
for every prime $\ell\nmid{}Np$, where $\mathrm{Frob}_{\ell}\in{}G_{\Q}$ is an arithmetic Frobenius at $\ell$.
Enlarging $L$ if necessary, one can assume $\F_{\psi}=\F:=\mathcal{O}_{L}/\mathfrak{m}_{L}$.
Then the representation $\overline{\rho}_{\psi}$ does not depend, up to isomorphism, on the 
arithmetic point $\psi$. Denote by $\overline{\rho}_{\mathbf{f}}$ this isomorphism class,
and assume throughout this note the following 

\begin{hyp}[$\mathbf{irr}$] \label{h2}$\overline{\rho}_{\mathbf{f}}$ is (absolutely) irreducible.
\end{hyp}

%Since $\overline{\rho}_{\phi_{f}}\cong{}\overline{\rho}_{A,p}\otimes{}\F_{\I}$
%and $\phi_{f}(\mathbf{a}_{\ell})\equiv{}\psi(\mathbf{a}_{l})\ (\mathrm{mod}\ \mathfrak{m}_{L})$
%for every prime $\ell$,
%Hypothesis $\ref{h2}$ implies that 
%$\overline{\rho}_{\psi}\cong{}\overline{\rho}_{A,p}\otimes{}\F_{\I}$ does not depend, up to isomorphism, on the arithmetic prime 
%$\psi\in{}\xari(\I)$. Write  $\overline{\rho}_{\mathbf{f}}$ for this   isomorphism class.
%\footnote{This assumption is indeed not necessary for the results that will be discussed in  Sections $\ref{bdsec}$ and $\ref{mainsec}$. }:
%\begin{hyp}[$\mathbf{triv}$]\label{h1}
%$\chi_{\mathbf{f}}$ is the trivial character modulo $Np$.
%\end{hyp}
%It follows by the irreducibility of  $\overline{\rho}_{\mathbf{f}}$ that  
Under this assumption, 
it is known that $\mathbb{H}_{\mathbf{f}}:=\big(h^o(N,\mathcal{O}_{L})\otimes_{\iw}\I\big)\cap{}
\big(\mathscr{K}\times{}0)$ is a
free $\I$-module of rank one (where
we use the decomposition 
$h^{o}(N,\mathcal{O}_{L})\otimes_{\iw}\mathscr{L}=\mathscr{K}\times{}\prod_{j\not=j_{o}}\mathscr{K}_{j}$
mentioned above). 

\begin{remark}\label{remirrdist}
\emph{Taking $\psi=\phi_{f}$ in the discussion above, we deduce that $\overline{\rho}_{\mathbf{f}}$
is isomorphic to the $\F$-base change  of the mod-$p$
Galois representation $\overline{\rho}_{A,p}$ attached to the $p$-torsion submodule $A[p]$ of $A(\overline{\Q})$. 
(Indeed, Hypothesis $\ref{h2}$ is equivalent to require that  $\overline{\rho}_{A,p}$
is absolutely irreducible.) 
Since $A$ has split multiplicative reduction at $p$, Tate's theory gives us an isomorphism
(see \cite{Tate-2} or Chapter V of \cite{Sil-2})  $$\overline{\rho}_{\mathbf{f}}|_{G_{\Q_{p}}}\cong
\begin{pmatrix} \omega_{\mathrm{cy}} & \ast \\ 0 & 1\end{pmatrix},$$
where $\overline{\rho}_{\mathbf{f}}|_{G_{\Q_{p}}}$
is the restriction of $\overline{\rho}_{\mathbf{f}}$ to $G_{\Q_{p}}$
and $\omega_{\mathrm{cy}} : G_{\Q_{p}}\twoheadrightarrow{}\mathrm{Gal}(\Q_{p}(\mu_{p})/\Q_{p})\cong{}\F_{p}^{\times}$ is the mod-$p$ cyclotomic character. As $p\not=2$,
this implies  that $\overline{\rho}_{\mathbf{f}}$ is \emph{$p$-distinguished}, i.e. that condition $(\mathbf{dist})_{\mathbf{f}}$
in \cite{S-U} is satisfied.}
\end{remark}

\subsection{Hida's representations $T_{\mathbf{f}}$ and $\ppq$}\label{ppq}
Let $T_{\mathbf{f}}=(T_{\mathbf{f}},T_{\mathbf{f}}^+)$ be Hida's $p$-ordinary $\mathbb{I}$-adic representation attached to $\mathbf{f}$
(see, e.g. \cite{H-2}, \cite{S-U}). Thanks to our Hypothesis $\ref{h2}$,
$T_{\mathbf{f}}$ is a free $\mathbb{I}$-module of rank two, equipped with a continuous action 
of $G_{\Q}$ which is unramified at every prime $\ell\nmid{}Np$, and such that 
\begin{equation}\label{eq:ES}
         \det\lri{1-\mathrm{Frob}_{\ell}\cdot{}X|T_{\mathbf{f}}}=1-\mathbf{a}_{\ell}\cdot{}X+\ell[\ell]\cdot{}X^{2}
\end{equation}
for every $\ell\nmid{}Np$. Here $\mathrm{Frob}_{\ell}=\mathrm{frob}_{\ell}^{-1}$ is an arithmetic Frobenius at $\ell$
and $[\cdot{}] : \Z_{N,p}^{\times}\subset{}\mathcal{O}_{L}\llbracket\Z_{N,p}^{\times}\rrbracket\fre{}\I$ is the structural morphism. 
Write
$\chi_{\mathrm{cy},N} : G_{\Q}\twoheadrightarrow{}
\mathrm{Gal}(\Q(\mu_{Np^{\infty}})/\Q)\cong{}\Z_{N,p}^{\times}=\Gamma
\times{}\lri{\Z/Np\Z}^{\times}$,
$\chi_{\mathrm{cy}} : G_{\Q}\twoheadrightarrow{}\Z_{p}^{\times}$
for the $p$-adic cyclotomic character (i.e. the composition of $\chi_{\mathrm{cy},N}$ with projection to $\Z_{p}^{\times}=\Gamma
\times{}\lri{\Z/p\Z}^{\times}$)
and  $\kappa_{\mathrm{cy}} : G_{\Q_{p}}\fre{}\Gamma$ for the composition of $\chi_{\mathrm{cy}}$
with projection to principal units. Then $[\chi_{\mathrm{cy}}]=[\kappa_{\mathrm{cy}}]=[\chi_{\mathrm{cy},N}]$
as $\I^{\times}$-valued characters on $G_{\Q}$
(since $f$ has trivial neben type).
In particular the determinant  representation of $T_{\mathbf{f}}$ is given by
\begin{equation}\label{eq:hghghgdet}
               \det_{\mathbb{I}}T_{\mathbf{f}}\cong{}\I\big(\chi_{\mathrm{cy}}\cdot{}[\kappa_{\mathrm{cy}}]\big).
\end{equation}
$T_{\mathbf{f}}^+$ is an $\I$-direct summand of $T_{\mathbf{f}}$
of rank one, which is invariant under the action of the decomposition group $G_{\Q_p}\hookrightarrow{}G_{\Q}$ determined by $i_{p}$.
Moreover, $T_{\mathbf{f}}^-:=T_{\mathbf{f}}/T_{\mathbf{f}}^+$ is an unramified $G_{\Q_p}$-module, and the 
Frobenius  
$\mathrm{Frob}_p\in{}G_{\Q_p}/I_{\Q_{p}}$ acts on it via multiplication by 
the $p$-th Fourier coefficient $\mathbf{a}_p\in{}\mathbb{I}^{\times}$ of $\mathbf{f}$. In other words
\begin{equation}\label{eq:addequation}
                 T_{\mathbf{f}}^{+}\cong{}\I\Big(\mathbf{a}_{p}^{\ast-1}\cdot{}\chi_{\mathrm{cy}}\cdot{}[\kappa_{\mathrm{cy}}]\Big);\ \ 
                 T_{\mathbf{f}}^{-}\cong{}\I\big(\mathbf{a}_{p}^{\ast}\big)
\end{equation}
as $\I[G_{\Q_{p}}]$-modules, where $\mathbf{a}_{p}^{\ast} : G_{\Q_{p}}\twoheadrightarrow{}G_{\Q_{p}}/I_{\Q_{p}}
\fre{}\I^{\times}$ is the unramified character sending $\mathrm{Frob}_{p}$ to $\mathbf{a}_{p}$,
and we write again $\kappa_{\mathrm{cy}} : G_{\Q_{p}}\twoheadrightarrow{}\mathrm{Gal}(\Q_{p}(\mu_{p^{\infty}})/\Q_{p})
\cong{}\Z_{p}^{\times}\twoheadrightarrow{}\Gamma$ for  the $p$-adic cyclotomic character on $G_{\Q_{p}}$. 

Given an arithmetic point $\psi\in{}\xari(\I)$, let $V_{\psi}$ be the contragredient of the $p$-adic Deligne representation attached to 
the eigenform $f_{\psi}$: it is a two-dimensional vector space over $K_{\psi}=\mathrm{Frac}(\I/\mathrm{ker}(\psi))$,
equipped with a continuous $K_{\psi}$-linear action of $G_{\Q}$ which is unramified at every prime $\ell\nmid{}Np$,
and such that the trace of $\mathrm{Frob}_{\ell}$ acting on $V_{\psi}$ equals the $\ell$th  Fourier coefficient $\psi(\mathbf{a}_{\ell})=a_{\ell}(f_{\psi})$
of $f_{\psi}$, for every $\ell\nmid{}Np$. As proved by Ribet, $V_{\psi}$ is an  absolutely irreducible $G_{\Q}$-representation,
so that the Chebotarev density theorem, together with the Eichler-Shimura relations $(\ref{eq:ES})$ tell us that there exists an isomorphism 
of $K_{\psi}[G_{\Q}]$-modules
\begin{equation}\label{eq:ESdis}
                    T_{\mathbf{f}}\otimes_{\I,\psi}K_{\psi}\cong{}V_{\psi}.
\end{equation}
In other words, $T_{\mathbf{f}}$ interpolates the contragredients of the Deligne representations of the classical 
specialisations of the Hida family $\mathbf{f}$. 
(Note: $T_{\mathbf{f}}$ is the \emph{contragredient} of the representation denoted 
by the same symbol in \cite{S-U}.)

Together with the representations $T_{\mathbf{f}}$, we are  particularly interested in a certain self-dual twist $\ppq$ of it,
defined as follows. Define the \emph{critical character}
\[
           [\chi_{\mathrm{cy}}]^{1/2}=[\kappa_{\mathrm{cy}}]^{1/2} : G_{\Q}
           \twoheadrightarrow{}\mathrm{Gal}(\Q(\mu_{p^{\infty}})/\Q)\cong{}\Z_{p}^{\times}
           \twoheadrightarrow{}\Gamma\stackrel{\sqrt{\cdot}}{\longrightarrow}\Gamma\stackrel{[\cdot{}]}{\longrightarrow}\I^{\times},
\]
where the isomorphism is given by the $p$-adic cyclotomic character $\chi_{\mathrm{cy}}$.
%$\chi_{\mathrm{cy}} : \mathrm{Gal}(\Q(\mu_{p^{\infty}})/\Q)
%\cong{}\Z_{p}^{\ast}$. 
(As $p\not=2$ by assumption, 
$\Gamma=1+p\Z_{p}$ is uniquely $2$-divisible, e.g. by Hensel's Lemma, so that $\sqrt{\cdot{}} : \Gamma\cong{}\Gamma$
is defined.) Let
\[
                  \ppq:=T_{\mathbf{f}}\otimes_{\I}[\chi_{\mathrm{cy}}]^{-1/2}\in{}_{\I[G_{\Q}]}\mathrm{Mod};\ \ 
                  \ppq^{\pm}:=T_{\mathbf{f}}^{\pm}\otimes_{\I}[\chi_{\mathrm{cy}}]^{-1/2}\in{}_{\I[G_{\Q_{p}}]}\mathrm{Mod},
\]
where we write for simplicity $[\chi_{\mathrm{cy}}]^{-1/2}$ for the inverse of $[\chi_{\mathrm{cy}}]^{1/2}$.
By $(\ref{eq:hghghgdet})$, $\ppq$ satisfies the crucial property:
\[
                  \det_{\I}\ppq\cong{}\I(1),
\]
i.e. the determinant representation of $\ppq$ is given by the $p$-adic cyclotomic character. As explained in \cite{N-P}, this implies  that there exists a skew-symmetric morphism of $\I[G_{\Q}]$-modules
\[
                    \pi : \ppq\otimes_{\I}\ppq\longrightarrow{}\I(1),
\]
inducing by adjunction isomorphisms of $\I[G_{\Q}]$- and $\I[G_{\Q_{p}}]$-modules respectively:
\[
              \mathrm{adj}(\pi) : \ppq\cong{}\mathrm{Hom}_{\I}(\ppq,\I(1));\ \ \mathrm{adj}(\pi) : \ppq^{\pm}\cong{}\mathrm{Hom}_{\I}(\ppq^{\mp},\I(1)).
\]
Let $\xari(\I)^{\prime}$ be the set of arithmetic points $\psi$ with trivial character and weight $k_{\psi}\equiv{}2\ (\mathrm{mod}\ 2(p-1))$.
Given $\psi\in{}\xari(\I)^{\prime}$, we have $\psi\circ{}[\chi_{\mathrm{cy}}]^{-1/2}(\mathrm{Frob}_{\ell})=\ell^{1-k_{\psi}/2}$
for every $\ell\nmid{}Np$. Equation
$(\ref{eq:ESdis})$ then gives: for every arithmetic point $\psi\in{}\xari(\I)^{\prime}$, 
there exists an isomorphism of $K_{\psi}[G_{\Q}]$-modules
\[
                           \ppq\otimes_{\I,\psi}K_{\psi}\cong{}V_{\psi}(1-k_{\psi}/2).
\]
In particular, $\ppq$ interpolates
the  family of self-dual, critical twists $V_{\psi}(1-k_{\psi}/2)$, for $\psi\in{}\xari(\I)^{\prime}$.

Let $v$ be a prime of $\overline{\Q}$ dividing $p$, associated with an embedding $i_{v} : \overline{\Q}
\hookrightarrow{}\overline{\Q}_{p}$. Write $i_{v}^{\ast} : G_{\Q_{p}}\hookrightarrow{}
G_{\Q}$ for the embedding determined by $i_{v}$, and $G_{v}:=i_{v}^{\ast}(G_{\Q_{p}})$
for the corresponding decomposition group at $v$.
Let $M_{\mathbf{f}}$ denote either $T_{\mathbf{f}}$ or $\mathbb{T}_{\mathbf{f}}$. 
Set $M_{\mathbf{f},v}^{\pm}:=M_{\mathbf{f}}^{\pm}\in{}_{\I[G_{\Q_{p}}]}\mathrm{Mod}$,
which we consider as $\I[G_{v}]$-modules via $i_{v}^{\ast}$.
Then there is a short exact sequence of $\I[G_{v}]$-modules 
\begin{equation}\label{eq:exact for general v}
            0\fre{}M_{\mathbf{f},v}^{+}\fre{i_{v}^{+}}M_{\mathbf{f}}\fre{p_{v}^{-}}M_{\mathbf{f},v}^{-}\fre{}0,
\end{equation}
where $i_{v}^{+}$ and $p_{v}^{-}$ are defined as follows. 
Fix $\alpha_{v}\in{}G_{\Q_{p}}$ and $\beta_{v}\in{}G_{\Q}$
such that $i_{v}=\alpha_{v}\circ{}i_{p}\circ{}\beta_{v}$. Then one sets
$i_{v}^{+}:=\beta_{v}^{-1}\circ{}i^{+}\circ{}\alpha_{v}^{-1}$ and $p_{v}^{-}:=\alpha_{v}\circ{}p^{-}\circ{}\beta_{v}$,
where $i^{+} : M_{\mathbf{f}}^{+}\subset{}M_{\mathbf{f}}$
and $p^{-} : M_{\mathbf{f}}\twoheadrightarrow{}M_{\mathbf{f}}^{-}$
denote the inclusion and  projection respectively.

\section[Skinner-Urban]{The theorem of Skinner-Urban }
The aim of this section is to state the main result of \cite{S-U} in our setting. In order to do that,
we recall  Skinner-Urban's construction 
of a three-variable $p$-adic $L$-function attached to $\mathbf{f}$ and a suitable quadratic imaginary field, 
and we  introduce the  Greenberg-style Selmer groups attached to the Hida family $\mathbf{f}$.

\subsection{Cyclotomic $p$-adic $L$-functions}\label{cpl}
For every $\psi\in{}\xari(\I)$, write $\mathcal{O}_{\psi}:=\psi(\I)$. 
Let $\Q_{\infty}/\Q$ be the $\Z_p$-extension of 
$\Q$, let $G_{\infty}:=\mathrm{Gal}(\Q_{\infty}/\Q)$,
and write $\iw_{\psi}^{\mathrm{cy}}:=\mathcal{O}_{\psi}\llbracket{}G_{\infty}\rrbracket$ for the cyclotomic Iwasawa algebra over 
$\mathcal{O}_{\psi}$.
Let $\psi\in{}\xari(\I)$, let $\epsilon$
be a quadratic Dirichlet character of conductor $C_{\epsilon}$ coprime with $Np$,
and let $S$ be a finite set of rational primes.
We say that an Iwasawa function $\mathcal{L}^{S}_{\epsilon}(f_{\psi})\in{}\iw_{\psi}^{\mathrm{cy}}$
is an $S$-primitive \emph{(cyclotomic) $p$-adic $L$-function of $f_{\psi}\otimes{}\epsilon$} if it satisfies the following interpolation property.
For every finite order character $\chi\in{}G_{\infty}\fre{}\overline{\Q}_{p}^{\ast}$ of conductor $p^{c_{\chi}}$
and every integer $1\leq{}j\leq{}k_{\psi}-1$:
\begin{align}\label{eq:intcp}
          \chi_{\mathrm{cy}}^{j-1}\chi{}\Big(\mathcal{L}^{S}_{\epsilon}(f_{\psi})\Big)= 
          \psi(\mathbf{a}_p)^{-c_{\chi}} & \cdot{} 
          %\lri{1-\frac{\omega^{1-j}\overline{\epsilon}\overline{\chi}(p)\cdot{}
          %p^{k_{\psi}-1-j}}{\psi(\mathbf{a}_p)}} 
          %\cdot{}
          \lri{1-\frac{\omega^{1-j}\epsilon\chi(p)\cdot{}p^{j-1}}{\psi(\mathbf{a}_p)}}
          \times{} \\ 
  \times    &    \frac{\lri{p^{c_{\chi}}C_{\epsilon}}^{j}{}(j-1)!\cdot{}L^{S\backslash{}\{p\}}(f_{\psi},\omega^{j-1}\chi^{-1}\epsilon,j)}
          {(-2\pi{}i)^{j-1}G\lri{\omega^{j-1}\chi^{-1}\epsilon}\cdot{}\Omega_{f_{\psi}}^{\mathrm{sgn}(\epsilon)\cdot{}(-1)^{j-1}}}
          \in{}\mathcal{O}_{\psi},    \nonumber
          %If you want the extra factor, you have to use the special value of %f_{\psi}^{\sahrp}%, determined by: $f_{\psi}$ is the p-stabilisation of 
          %$f_{k}^{\sharp}$. This is overlooked (and incorrect) in \cite{S-U}. 
\end{align}
where the notations are as follows.
$L(f_{\psi},\mu,s)=L^{\emptyset}(f_{\psi},\mu,s)$ denotes the analytic continuation of the complex Hecke $L$-series 
$L(f_{\psi},\mu,s):=\sum_{n=1}^{\infty}\mu(n)\frac{\psi(\mathbf{a}_{n})}{n^{s}}=\prod_{\ell}E_{\ell}(f_{\psi}\otimes{}\mu,\ell^{-s})^{-1}$
of $f_{\psi}$ twisted by $\mu$;
for every finite set $\Sigma$ of rational primes, 
$L^{\Sigma}(f_{\psi},\mu,s):=\prod_{\ell\in{}\Sigma}E_{\ell}(f_{\psi}\otimes{}\mu,\ell^{-s})\cdot{}L(f_{\psi},\mu,s)$.
$G(\mu)$ denotes the Gauss sum of the character $\mu$.
Finally, $\Omega_{f_{\psi}}^{\pm}$ are canonical periods of $f_{\psi}$, as defined, e.g. in \cite{S-U}.
%\footnote{We note that different normalisations for the canonical periods are used 
%in different  Sections of \cite{S-U}. Accordingly, different powers of $2\pi{}i$ sometimes appear on the 
%interpolation formulae displayed in \emph{loc. cit.}}.
We recall that $\Omega_{f_{\psi}}^{\pm}$ is an element of $\C^{\times}$, defined only up to multiplication by a $p$-adic 
unit in $\mathcal{O}_{\psi}$, and such that the quotient appearing in the second line of the equation above lies in the number 
field $\Q\lri{\psi(\mathbf{a}_n) : n\in{}\mathbf{N}}$ generated by the Fourier coefficients of $f_{\psi}$. 
Together with the Weierstra\ss{} preparation theorem, this implies that $\mathcal{L}_{\epsilon}^{S}(f_{\psi})$, if it exists, is unique up
to multiplication a unit in $\mathcal{O}^{\times}_{\psi}$.
For a proof of the existence,
see \cite[Chapter I]{M-T-T}.

\subsection{Skinner-Urban three variable $p$-adic $L$-functions}\label{sup-adic}
Let $K/\Q$ be a quadratic \emph{imaginary} field of (absolute) discriminant $D_{K}$, let $q_{K}\nmid{}6p$
be a rational prime which splits in $K$, and let $S$ be a finite set of finite primes of $K$. We assume  that the following hypothesis is satisfied.

\begin{hyp}\label{h3}
The data $(K,p,L,q_{K},S)$ satisfy the following assumptions:

\begin{itemize}
\item[$\bullet$] $D_{K}$ is coprime with $6Np$. %(i.e. $2$, $3$ and  every prime divisor of $Np$ is unramified in $K/\Q$).
\item[$\bullet$] $p$ splits in $K$.
\item[$\bullet$] $L/\Q_p$ contains the finite extension $\Q_p\lri{D_{K}^{1/2},(-1)^{1/2},1^{1/Np}}/\Q_p$. 
\item[$\bullet$] $S$ consists of all the  primes of $K$ which divide $q_{K}D_{K}Np$.
\end{itemize}
\end{hyp}

Let $\mathcal{K}/K$ be the $\Z_{p}^{2}$-extension of $K$.
Then $\mathcal{K}=K_{\infty}\cdot{}K_{\infty}^{-}$,
where $K_{\infty}$ (resp., $K_{\infty}^-$) is the cyclotomic (resp., anticyclotomic) $\Z_p$-extension of $K$.
Denote by $G_{\infty}:=\mathrm{Gal}(K_{\infty}/K)\cong{}\mathrm{Gal}(\Q_{\infty}/\Q)$ and $D_{\infty}:=\mathrm{Gal}(K_{\infty}^-/K)$
the Galois groups of $K_{\infty}/K$ and $K_{\infty}^{-}/K$ respectively, 
so that $\mathrm{Gal}(\mathcal{K}/K)\cong{}G_{\infty}\times{}D_{\infty}$, and let $\I_{\infty}:=\I\llbracket{}G_{\infty}\rrbracket$.
Section $12$ of \cite{S-U} constructs an element 
\[
         \mathcal{L}_{K}^{S}(\mathbf{f})\in{}
         \I\llbracket{}G_{\infty}\times{}D_{\infty}\rrbracket=\I_{\infty}\llbracket{}D_{\infty}\rrbracket{},
\]
satisfying the following property: given $\psi\in{}\xari(\I)$, write 
$\psi^{\mathrm{cy}} : \I\llbracket{}G_{\infty}\times{}D_{\infty}\rrbracket\fre{}
\iw_{\psi}^{\mathrm{cy}}=\psi(\I)\llbracket{}G_{\infty}\rrbracket$ for the morphism of $\mathcal{O}_{L}\llbracket{}G_{\infty}\rrbracket$-algebras  whose restriction to 
$\I$ is $\psi$, and s.t. $\psi^{\mathrm{cy}}(D_{\infty})=1$.
Moreover, fix canonical periods $\Omega_{\psi}^{\pm}:=\Omega_{f_{\psi}}^{\pm}$ for $f_{\psi}$.
Then, for every $\psi\in{}\xari(\I)$, there exists $\lambda_{\psi}\in{}\mathcal{O}_{\psi}^{\times}$ such that
\begin{equation}\label{eq:spesu}
       \psi^{\mathrm{cy}}\lri{\mathcal{L}_{K}^{S}(\mathbf{f})}=\lambda_{\psi}\cdot{}
       \mathcal{L}^{S}(f_{\psi})\cdot{}\mathcal{L}_{\epsilon_{K}}^{S}(f_{\psi}),
\end{equation}
where 
$\mathcal{L}^{S}(f_{\psi}):=\mathcal{L}^{S}_{1}(f_{\psi})$ (resp., 
$\mathcal{L}_{\epsilon_{K}}^{S}(f_{\psi})$) is an 
$S$-primitive cyclotomic $p$-adic $L$-function 
of $f_{\psi}$ (resp., of 
$f_{\psi}\otimes{}\epsilon_{K}$), computed with respect to the periods $\Omega_{\psi}^{\pm}$.
Here $\epsilon_{K} : \lri{\Z/D_{K}\Z}^{\times}\fre{}\overline{\Q}_{p}^{\ast}$ is the primitive quadratic character 
attached to $K/\Q$, and we write for simplicity $\mathcal{L}^{S}_{\ast}(f_{\psi}):=\mathcal{L}^{S_{o}}_{\ast}(f_{\psi})$,
where $S_{o}:=\{\ell\ \text{prime} : \ell|q_{K}D_{K}Np\}$ is the set of rational primes lying below the primes in $S$.
More precisely, such a $p$-adic $L$-function $\mathcal{L}_{K}^{S}(\mathbf{f})=\mathcal{L}_{K}^{S}(\mathbf{f};1_{\mathbf{f}})$ 
is attached to every generator $1_{\mathbf{f}}$ of the free rank-one $\I$-module $\mathbb{H}_{\mathbf{f}}$
(mentioned at the end of Section $\ref{hifa}$),
and it is a well defined element of $\I_{\infty}\llbracket{}D_{\infty}\rrbracket$ only
up to multiplication by a unit in $\I$.
We refer to \cite[Theorems 12.6 and 12.7 and Proposition 12.8]{S-U} for the proofs of these facts, and for the 
interpolation property characterizing $\mathcal{L}_{K}^{S}(\mathbf{f})$. 

\begin{remark}\emph{Recall that Hypothesis $\ref{h2}$ (denoted $(\mathbf{irred})_{\mathbf{f}}$ in \cite{S-U}) 
is in order, i.e. that the residual representation $\overline{\rho}_{\mathbf{f}}$ is assumed to be  (absolutely) irreducible. 
As explained in Remark $\ref{remirrdist}$, we also know that $\overline{\rho}_{\mathbf{f}}$ is $p$-distinguished, i.e. that 
condition $(\mathbf{dist})_{\mathbf{f}}$ in \cite{S-U} is satisfied. These two hypotheses are used by Skinner and Urban in their construction of
$\mathcal{L}_{K}^{S}(\mathbf{f})$
(cf. Section 3.4.5 and Theorems 12.6 and 12.7 of \cite{S-U}).}
\end{remark}

\subsection{Greenberg Selmer groups}\label{greeselgen} 
Let $F/\Q$ be a number field, and let $\mathcal{F}/F$ be a $\Z_{p}$-power extension of $F$,
i.e. $\mathrm{Gal}(\mathcal{F}/F)\cong{}\Z_{p}^{r}$ for some $r\geq{}0$.
Write
$\mathbb{I}_{\mathcal{F}}:=\mathbb{I}\llbracket
\mathrm{Gal}(\mathcal{F}/K)\rrbracket$ and 
\[
     T_{\mathbf{f}}(\mathcal{F}):=T_{\mathbf{f}}\otimes_{\mathbb{I}}\mathbb{I}_{\mathcal{F}}(\varepsilon_{\mathcal{F}}^{-1})\in{}
     _{\mathbb{I}_{\mathcal{F}}[G_{F}]}\mathrm{Mod},
\]     
where  $\varepsilon_{\mathcal{F}} : G_{F}\twoheadrightarrow{}\mathrm{Gal}(\mathcal{F}/F)\subset{}\mathbb{I}_{\mathcal{F}}^{\times}$
is the tautological representation.
%, and  $T_{\mathbf{f}}\otimes_{\mathbb{I}}\mathbb{I}_{\mathcal{F}}$
%is an $\mathbb{I}_{\mathcal{F}}[G_{F}]$-module with diagonal action, $G_{F}$ acting trivially on $\mathbb{I}_{\mathcal{F}}$.
Let $v$ of be a prime of $F$ dividing $p$,
associated with an embedding $i_{v} : \overline{\Q}\hookrightarrow{}\overline{\Q_{p}}$,
and let $i_{v}^{\ast} : G_{F_{v}}\hookrightarrow{}G_{F}$
denote the corresponding decomposition group at $v$. Define  
\[
           T_{\mathbf{f}}(\mathcal{F})_{v}^{\pm}:=T_{\mathbf{f},v}^{\pm}\otimes_{\I}\I_{\mathcal{F}}
           (\varepsilon_{\mathcal{F},v}^{-1})\in{}_{\I_{\mathcal{F}}[G_{F_{v}}]}\mathrm{Mod},
\]
where $\varepsilon_{\mathcal{F},v}:=\varepsilon_{\mathcal{F}}\circ{}i_{v}^{\ast} : G_{F_{v}}\fre{}\I_{\mathcal{F}}^{\times{}}$.
The exact sequence $(\ref{eq:exact for general v})$ then 
induces a short exact sequence of $\I_{\mathcal{F}}[G_{F_{v}}]$-modules 
%There are  short exact sequences of $\I_{\mathcal{F}}[G_{F_{v}}]$-modules 
%\vspace{-2mm}
\begin{equation}\label{eq:hhhnnn}
                 0\fre{}T_{\mathbf{f}}(\mathcal{F})_{v}^{+}\fre{i_{v}^{+}}T_{\mathbf{f}}(\mathcal{F})\fre{p_{v}^{-}}T_{\mathbf{f}}(\mathcal{F})_{v}^{-}\fre{}0.
\end{equation}
%where the maps $i_{v}^{+}=i_{v,\mathcal{F}}^{+}$ and $p_{v}^{-}=p_{v,\mathcal{F}}^{-}$ are defined as follows.
%Let $i_{p}^{+} : T_{\mathbf{f}}^{+}\subset{}T_{\mathbf{f}}$
%and $p_{p}^{-} : T_{\mathbf{f}}\twoheadrightarrow{}T_{\mathbf{f}}^{-}$
%be the natural inclusion and projection of $\I[G_{\Q_{p}}]$-modules respectively
%($G_{\Q_{p}}$ acting on $T_{\mathbf{f}}$ via our fixed embedding $i_{p}^{\ast}$),
%and write $i_{p,\mathcal{F}}^{+} : T_{\mathbf{f}}(\mathcal{F})_{v}^{+}\fre{}T_{\mathbf{f}}(\mathcal{F})$ and
%$p_{p,\mathcal{F}}^{-} : T_{\mathbf{f}}(\mathcal{F})\twoheadrightarrow{}T_{\mathbf{f}}(\mathcal{F})_{v}^{-}$
%for the corresponding $\I_{\mathcal{F}}$-base changes.
%Then we define $i_{v}^{+}:=\beta_{v}^{-1}\circ{}i_{p,\mathcal{F}}^{+}\circ{}\alpha_{v}^{-1}$ and 
%$p_{v}^{-}:=\alpha_{v}\circ{}p_{p,\mathcal{F}}^{-}\circ{}\beta_{v}$.

Let $S$ be a finite set of primes of $F$, containing all the prime divisors of $NpD_{F}$
(where $D_{F}:=\mathrm{disc}(F/\Q)$ is the discriminant of $F/\Q$), and let 
$G_{F,S}:=\mathrm{Gal}(F_{S}/F)$ be the Galois group of the maximal algebraic extension $F_{S}/F$
which is unramified at every finite  prime $v\notin{}S$ of $F$.
As $\mathcal{F}/F$ (being a  $\Z_{p}$-power extension) is unramified outside $p$, 
$T_{\mathbf{f}}(\mathcal{F})$ is unramified at every finite prime $v\notin{}S$ of $F$,
i.e. $T_{\mathbf{f}}(\mathcal{F})$ is a $\I_{\mathcal{F}}[G_{F,S}]$-module.  
Let
$\mathfrak{a}\in{}\mathrm{Spec}(\I_{\mathcal{F}})$,
and write $\mathbb{I}_{\mathcal{F}}^{\ast}:=\Hom{\mathrm{cont}}(\mathbb{I}_{\mathcal{F}},\divp)$ for the Pontrjagin dual of 
$\mathbb{I}_{\mathcal{F}}$,
so that $\I_{\mathcal{F}}^{\ast}[\mathfrak{a}]$ is the Pontrjagin dual of $\I_{\mathcal{F}}/\mathfrak{a}$.
Define the (discrete) \emph{non-strict Greenberg Selmer group}:
\begin{equation}\label{eq:seldef}
     \mathrm{Sel}_{\mathcal{F}}^{S}(\mathbf{f},\mathfrak{a}):=\ker
     \lri{H^{1}\big(G_{F,S},T_{\mathbf{f}}(\mathcal{F})\otimes_{\mathbb{I}_{\mathcal{F}}}\mathbb{I}_{\mathcal{F}}^{\ast}[\mathfrak{a}]\big)\stackrel{}{\longrightarrow{}}
     \prod_{v|p}H^{1}\big(I_{v},T_{\mathbf{f}}(\mathcal{F})_{v}^-\otimes_{\mathbb{I}_{\mathcal{F}}}\mathbb{I}_{\mathcal{F}}^{\ast}[\mathfrak{a}]\big)
       }
\end{equation}
where  
$I_{v}=I_{F_{v}}\subset{}G_{F_{v}}$ is
the inertia subgroup and the arrow is defined by $\prod_{v|p}p_{v\ast}^{-}\circ{}\mathrm{res}_{v}$,
$p_{v\ast}^{-}$ being the morphism induced in cohomology by  $p_{v}^{-} : T_{\mathbf{f}}(\mathcal{F})\twoheadrightarrow{}T_{\mathbf{f}}(\mathcal{F})_{v}^{-}$.
It is a cofinitely generated $\mathbb{I}_{\mathcal{F}}/\mathfrak{a}$-module, i.e. its Pontrjagin dual
\[
              X_{\mathcal{F}}^{S}(\mathbf{f},\mathfrak{a}):=\Hom{\mathbb{I}_{\mathcal{F}}}
              \lri{\mathrm{Sel}_{\mathcal{F}}^{S}(\mathbf{f},\mathfrak{a}),\mathbb{I}_{\mathcal{F}}^{\ast}[\mathfrak{a}]}
              \cong{}\Hom{\Z_{p}}\Big(\mathrm{Sel}_{\mathcal{F}}^{S}(\mathbf{f},\mathfrak{a}),\divp\Big)
\]
is a finitely-generated $\mathbb{I}_{\mathcal{F}}/\mathfrak{a}$-module. If $\mathfrak{a}=0$, write more simply
$$\mathrm{Sel}_{\mathcal{F}}^{S}(\mathbf{f}):=\mathrm{Sel}_{\mathcal{F}}^{S}(\mathbf{f},0);\ \ 
X_{\mathcal{F}}^{S}(\mathbf{f}):=X_{\mathcal{F}}^{S}(\mathbf{f},0).$$
By construction there are  natural morphisms of $\I_{\mathcal{F}}/\mathfrak{a}$-modules
\begin{equation}\label{eq:controleq}
     \mathrm{Sel}_{\mathcal{F}}^{S}(\mathbf{f},\mathfrak{a})\fre{}
     \mathrm{Sel}_{\mathcal{F}}^{S}(\mathbf{f})[\mathfrak{a}];\ \   
      X_{\mathcal{F}}^{S}(\mathbf{f})\otimes_{\I_{\mathcal{F}}}\I_{\mathcal{F}}/\mathfrak{a}
            \fre{}X_{\mathcal{F}}^{S}(\mathbf{f},\mathfrak{a}).
\end{equation}

Since $\I$ is a normal domain,
so is $\I_{\mathcal{F}}\cong{}\I\llbracket{}X_{1},\dots,X_{r}\rrbracket$ (with $\mathrm{Gal}(\mathcal{F}/F)\cong{}\Z_{p}^{r}$). 
Write $\mathrm{Ch}_{\mathcal{F}}^{S}(\mathbf{f})\subset{}\I_{\mathcal{F}}$ for the characteristic ideal of the $\I_{\mathcal{F}}$-module 
$X^{S}_{\mathcal{F}}(\mathbf{f})$ (cf. Section 3 of \cite{S-U}):
\[
      \mathrm{Ch}_{\mathcal{F}}^{S}(\mathbf{f}):=\left\{ x\in{}\I_{\mathcal{F}} : 
      \mathrm{ord}_{\mathfrak{a}}(x)\geq{}\mathrm{length}_{\mathfrak{a}}\lri{X^{S}_{\mathcal{F}}(\mathbf{f})},\ 
      \text{for every} \ \mathfrak{a}\in{}\mathrm{Spec}(\I_{\mathcal{F}})\ \text{s.t.} \ \mathrm{height}(\mathfrak{a})=1\right\}.
\]
Here $\mathrm{ord}_{\mathfrak{a}} : \mathrm{Frac}(\I_{\mathcal{F}})\fre{}\Q\cup\{\infty\}$ is the (normalised) discrete valuation attached to the 
height-one prime $\mathfrak{a}$, and $\mathrm{length}_{\mathfrak{a}} : \lri{_{\I_{\mathcal{F}}}\mathrm{Mod}}_{\mathrm{ft}}
\fre{}\Z\cup\{\infty\}$
is defined by sending a finite $\I_{\mathcal{F}}$-module $M$ to the 
length over $\lri{\I_{\mathcal{F}}}_{\mathfrak{a}}$ of the localization $M_{\mathfrak{a}}$ of $M$ at $\mathfrak{a}$.

\begin{remark}\emph{Assume that $\mathcal{F}/F$ contains the cyclotomic $\Z_{p}$-extension 
$F_{\infty}\subset{}F(\mu_{p^{\infty}})$ of $F$. Thanks to the work of Kato \cite{Kateul}, we know that  
$X_{\mathcal{F}}^{S}(\mathbf{f})$ is a \emph{torsion} $\mathbb{I}_{\mathcal{F}}$-module (see also Section $3$ of \cite{S-U}), 
so that $\mathrm{Ch}_{\mathcal{F}}^{S}(\mathbf{f})$ is a \emph{non-zero} divisorial ideal
(which is principal if $\I$ is a unique factorization domain).}
\end{remark}

\subsection{The main result of \cite{S-U}}\label{mainsusec} Let $(K,p,L,q_{K},S)$
be as in Section $\ref{sup-adic}$, and assume (as in \emph{loc. cit.})
that this data satisfies Hypothesis $\ref{h3}$. In particular, $K/\Q$ is an \emph{imaginary} quadratic field 
in which $p$ \emph{splits}. Let
$\mathcal{K}=K_{\infty}\cdot{}K_{\infty}^{-}$ be the $\Z_{p}^{2}$-extension of $K$,
and let $\mathcal{L}_{K}^{S}(\mathbf{f})\in{}\I_{\mathcal{K}}=\I\llbracket\mathrm{Gal}(\mathcal{K}/K)\rrbracket$
be Skinner-Urban's three variable $p$-adic $L$-function.
Together with Hypotheses $\ref{h2}$ and $\ref{h3}$, we have to consider:

\begin{hyp}[$\mathbf{ram}$]\label{h4} Decompose  $N=N^{+}N^{-}$, where $N^{+}=N^{+}_{K}$ (resp., $N^{-}=N_{K}^{-}$)
is divided precisely by the prime divisors of $N=N_{A}/p$ which are split (resp., inert) in $K$. Then:
\begin{itemize}
\item[$\bullet$] $N^{-}$ is square-free,
and has an \emph{odd} number of prime divisors.
\item[$\bullet$] The residual representation $\overline{\rho}_{\mathbf{f}}$ is ramified at every prime $\ell\Vert{}N^{-}$.
\end{itemize}
\end{hyp}

The following fundamental and deep result is Theorem 3.26 of \cite{S-U}. 

\begin{theo}[Skinner-Urban \cite{S-U}]\label{msu}
Assume that Hypotheses $\ref{h2}$, $\ref{h3}$ and $\ref{h4}$ hold. Then
$$\mathrm{Ch}_{\mathcal{K}}^S(\mathbf{f})\subseteq{}
\lri{\mathcal{L}_{K}^{S}(\mathbf{f})}.$$
\end{theo}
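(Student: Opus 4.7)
The plan is to follow the Eisenstein congruence method pioneered by Ribet and Mazur-Wiles, and developed by Urban in the higher-rank setting. Because $K$ is imaginary quadratic with $p$ split, the natural ambient group is $\mathrm{GU}(2,2)/\Q$, whose cuspidal automorphic spectrum is large enough to accommodate base changes of forms on $\mathrm{GL}_{2}/\Q$, yet whose Eisenstein spectrum can encode the three-variable $p$-adic $L$-function $\mathcal{L}_{K}^{S}(\mathbf{f})\in\I_{\mathcal{K}}$.

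First, I would construct a Hida-theoretic family of Klingen-type Eisenstein series on $\mathrm{GU}(2,2)$, built by inducing from the Siegel-Klingen parabolic the data consisting of the Hida family $\mathbf{f}$ (viewed via base change to $K$) twisted by a universal character of $\mathrm{Gal}(\mathcal{K}/K)$. The central computation is that an appropriately normalised constant term (or Fourier-Jacobi coefficient) of this family coincides, up to a unit of $\I_{\mathcal{K}}$, with $\mathcal{L}_{K}^{S}(\mathbf{f})$. This is essentially the contents of the interpolation property $(\ref{eq:spesu})$ read on the automorphic side, and requires explicit Shimura-type zeta integrals together with the hypotheses on $(K,p,L,q_{K},S)$ in Hypothesis \ref{h3} to pin down the local archimedean and bad-prime factors.

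Second, applying the standard principle that an Eisenstein family is congruent to a cuspidal Hida family modulo its constant term, I would produce a cuspidal eigenfamily $\Pi$ on $\mathrm{GU}(2,2)$ whose associated Galois pseudocharacter is congruent, modulo $\mathcal{L}_{K}^{S}(\mathbf{f})$, to the reducible pseudocharacter built from $T_{\mathbf{f}}(\mathcal{K})$ and the appropriate characters. Here Hypothesis \ref{h2} (residual irreducibility of $\overline{\rho}_{\mathbf{f}}$) and the $p$-distinguishedness noted in Remark \ref{remirrdist} are crucial: they allow one to invoke Urban's lattice construction to select an $\I_{\mathcal{K}}$-stable lattice inside the Galois representation of $\Pi$ whose associated semisimplification is reducible and whose induced extension class is nonzero modulo $\mathcal{L}_{K}^{S}(\mathbf{f})$. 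The ordinarity of $\Pi$ at $p$ translates into the local condition at primes above $p$ defining the Greenberg Selmer group $\mathrm{Sel}_{\mathcal{K}}^{S}(\mathbf{f})$, while Hypothesis \ref{h4} (squarefreeness of $N^{-}$ with odd number of prime factors, and ramification of $\overline{\rho}_{\mathbf{f}}$ at primes dividing $N^{-}$) controls the local conditions away from $p$ and ensures the correct sign so that $\mathcal{L}_{K}^{S}(\mathbf{f})$ is nonzero.

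Third, the extension classes thus produced are elements of $\mathrm{Sel}_{\mathcal{K}}^{S}(\mathbf{f})$ annihilated by $\mathcal{L}_{K}^{S}(\mathbf{f})$, and a careful counting via Fitting ideals, carried out at each height-one prime of $\I_{\mathcal{K}}$, yields the divisibility $\mathrm{Ch}_{\mathcal{K}}^{S}(\mathbf{f})\subseteq(\mathcal{L}_{K}^{S}(\mathbf{f}))$. The main obstacle is the lattice step: one must show that the congruence ideal between $\Pi$ and the Eisenstein family equals the Eisenstein ideal (so that the Galois cohomology classes produced are genuinely abundant), and one must correctly match local deformation conditions at every prime in $S$ to land in the prescribed Greenberg Selmer group. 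Handling the non-split primes dividing $N^{-}$ and verifying the required local freeness of $T_{\mathbf{f}}$ as a Galois representation is the most delicate input, and is precisely where Hypothesis \ref{h4} and the residual assumptions enter decisively.
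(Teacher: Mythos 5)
The paper does not prove this statement; it is quoted verbatim as Theorem~3.26 of \cite{S-U}, and the role of Section~3 of the paper is only to translate it into the central-critical setting. Your sketch is therefore not an alternative to anything in the paper: it is a high-level outline of the internal argument of \cite{S-U} itself. As such it is broadly accurate — $\mathrm{GU}(2,2)$ is the right ambient group, the Klingen--Eisenstein family with constant term realising $\mathcal{L}_{K}^{S}(\mathbf{f})$ is the right object, the congruence/lattice construction of cohomology classes landing in $\mathrm{Sel}_{\mathcal{K}}^{S}(\mathbf{f})$ is the right mechanism, and Hypotheses~\ref{h2}, \ref{h3}, \ref{h4} together with $p$-distinguishedness are used in essentially the places you say — but at the level of detail given it is a roadmap rather than a proof. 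Each of your three steps (the $q$-expansion/Fourier--Jacobi computation matching local factors, the control of the Eisenstein ideal versus the congruence ideal, and the Fitting-ideal accounting at all height-one primes of $\I_{\mathcal{K}}$) is a theorem of considerable difficulty occupying a large part of \cite{S-U}; you also omit the input needed to make the method start, namely a non-vanishing modulo $p$ of a suitable specialisation of $\mathcal{L}_{K}^{S}(\mathbf{f})$, which \cite{S-U} obtain from Vatsal-type results and which is independent of the sign considerations in Hypothesis~\ref{h4}. In the context of this paper you should do what the author does: cite \cite[Theorem~3.26]{S-U} and not attempt to reprove it.
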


\section[Central Critical]{Restricting to the central critical line}\label{rccl}
The aim of this section is to specialise Skinner-Urban's result to the \emph{(cyclotomic) central critical line} in the \emph{weight-cyclotomic}
space. More precisely, we use Theorem $\ref{msu}$ to compare the order of vanishing of a  certain \emph{central-critical $p$-adic $L$-function}
of the weight variable with the structure of a certain \emph{central-critical Selmer group} attached to Hida's half-twisted 
representation $\ppq$. 

In this section, the notations and hypotheses of Section  $\ref{mainsusec}$ are in order.
In particular, we assume that Hypotheses $\ref{h2}$, $\ref{h3}$ and $\ref{h4}$ are satisfied. 

\subsection{The (localised) Hida family}\label{lochidafam} Let $\phi_{f}\in{}\mathcal{X}^{\mathrm{arith}}(\mathbb{I})$ be the arithmetic point 
of weight $2$ and trivial character introduced in Section $\ref{hifa}$, with associated $p$-stabilised weight-two newform
$f\in{}S_{2}(\Gamma_{0}(Np),\Z)^{\mathrm{new}}$.
Write $\mathfrak{p}_{f}:=\ker\lri{\phi_{f}}\in{}\mathrm{Spec}(\mathbb{I})$. By \cite[Corollary 1.4]{H-2},  the localisation 
$\mathbb{I}_{\mathfrak{p}_{f}}$ is a discrete valuation ring, unramified over the localisation of $\iw=\mathcal{O}_{L}\llbracket{}\Gamma\rrbracket$
at the prime $\widetilde{\mathfrak{p}}=\mathfrak{p}_{f}\cap{}\iw$. Fix a topological generator 
$\gamma_{\mathrm{wt}}\in{}\Gamma=1+p\Z_{p}$, and write $\varpi_{\mathrm{wt}}:=\gamma_{\mathrm{wt}}-1$.
Then $\varpi_{\mathrm{wt}}$ is a generator of the prime $\widetilde{\mathfrak{p}}$, so that 
\begin{equation}\label{eq:unif}
                      \mathfrak{p}_{f}\cdot{}\mathbb{I}_{\mathfrak{p}_{f}}=\varpi_{\mathrm{wt}}\cdot{}\mathbb{I}_{\mathfrak{p}_{f}},
\end{equation}
i.e.  $\varpi_{\mathrm{wt}}\in{}\iw$ is a uniformiser of the discrete valuation ring  $\mathbb{I}_{\mathfrak{p}_{f}}$.

Let $W\subset{}\Z_{p}$ be a non-empty open neighbourhood of $2$. 
Denote by $\mathscr{A}(W)\subset{}\overline{\Q}_{p}\llbracket{}k-2\rrbracket$
the subring of formal power series in $k-2$ which converge for every $k\in{}W$.
As explained in \cite{G-S} (see also \cite{N-P}), there exist an open neighbourhood $U=U_{f}\subset{}\Z_{p}$ of $2$,
and  a natural morphism (the \emph{Mellin transform centred at $\phi_{f}$}) 
$$\mathtt{M} : \mathbb{I}\longrightarrow{}\mathscr{A}(U),$$ characterised by the following properties: for every 
$x\in{}\mathbb{I}$ write $\mathtt{M}_{x}(k):=\mathtt{M}(x)(k)\in{}\mathscr{A}(U)$. Then:
$(i)$ for every $x\in{}\mathbb{I}$, $\mathtt{M}_{x}(2)=\phi_{f}(x)$ and $(ii)$ for every $\gamma\in{}\Gamma\subset{}\mathbb{I}^{\times}$,
$\mathtt{M}_{[\gamma]}(k)=\gamma^{k-2}:=\exp_{p}\lri{(k-2)\cdot{}\log_{p}(\gamma)}\in{}\mathscr{A}(\Z_{p})$
($[\cdot{}] : \iw\fre{}\I$ being the structural  morphism).
For every positive integer $n$, write $a_{n}(k):=\mathtt{M}(\mathbf{a}_{n})\in{}\mathscr{A}(U)$ for the image of the $n$-th Hecke operator 
$\mathbf{a}_{n}\in{}\mathbb{I}$ under $\mathtt{M}$, and consider the formal $q$-expansion with coefficients in $\mathscr{A}(U)$:
\[
                      f_{\infty}:=\sum_{n=1}^{\infty}a_{n}(k)q^{n}\in{}\mathscr{A}(U)\llbracket{}q\rrbracket.
\]
This is the `portion' of the Hida family $\mathbf{f}$ we are mostly interested in. More precisely, 
let $$U^{\mathrm{cl}}:=\big\{k\in{}U\cap{}\Z : k\geq{}2; k\equiv{}2\ \big(\mathrm{mod}\ 2(p-1)\big)\big\}$$
be the subset of \emph{classical points},
which is a dense subset of $U$. For every classical point $\kappa\in{}U^{\mathrm{cl}}$, the composition 
$\phi_{\kappa} : \mathbb{I}\stackrel{\mathtt{M}}{\longrightarrow{}}\mathscr{A}(U)\stackrel{\mathrm{ev}_{\kappa}\ }{\longrightarrow{}}\overline{\Q}_{p}$ (where $\mathrm{ev}_{\kappa}$ is  evaluation at $\kappa$) is an arithmetic point of weight $\kappa$ and  trivial character, and the 
\emph{weight-$\kappa$ specialisation} $f_{\kappa}:=f_{\phi_{\kappa}}=\sum_{n=1}^{\infty}a_{n}(\kappa)q^{n}\in{}S_{\kappa}(\Gamma_{0}(Np))$
is a $p$-ordinary normalised eigenform of weight $\kappa$ and level $\Gamma_{0}(Np)$. By construction: $f=f_{2}$.
Moreover, $N$ divides the conductor of $f_{\kappa}$ for every $\kappa\in{}U^{\mathrm{cl}}$ (and 
$f_{\kappa}$ is old at $p$ for  $\kappa>2$, i.e. $f_{\kappa}$ is the $p$-stabilisation of a newform of level $\Gamma_{0}(N)$ when $\kappa>2$ \cite{H-2}).

\subsection{The central critical $p$-adic $L$-function}
Let $\mathscr{A}(U\times{}\Z_{p}\times{}\Z_{p})\subset{}\overline{\Q}_{p}\llbracket
k-2,s-1,r-1\rrbracket$ be the subring of formal power series converging 
for every  $(k,s,r)\in{}U\times{}\Z_{p}\times{}\Z_{p}$.
Let $\chi_{\mathrm{cy}} : G_{\infty}\cong{}1+p\Z_{p}$ be the $p$-adic cyclotomic character,
and fix an isomorphism $\chi_{\mathrm{acy}} : D_{\infty}\cong{}1+p\Z_{p}$.
We can uniquely  extend the Mellin transform $\mathtt{M}$ to  a morphism of rings
\[
               \widetilde{\mathtt{M}} : \mathbb{I}\llbracket{}G_{\infty}\times{}D_{\infty}\rrbracket\longrightarrow{}\mathscr{A}(U\times{}\Z_{p}
               \times{}\Z_{p}),
\]  
by mapping  every $\sigma\in{}D_{\infty}$ (resp., $\sigma\in{}G_{\infty}$) to the analytic function on $\Z_{p}$
represented by the power series 
$\widetilde{\mathtt{M}}(\sigma):=\chi_{\mathrm{acy}}(\sigma)^{r-1}
=\exp_{p}\lri{(r-1)\cdot{}\log_{p}\big(\chi_{\mathrm{acy}}(\sigma)\big)}$
(resp., $\widetilde{\mathtt{M}}(\sigma):=\chi_{\mathrm{cy}}(\sigma)^{s-1}$). We then define the \emph{$S$-primitive 
analytic three-variable $p$-adic $L$-function of $f_{\infty}/K$}:
\[
                   L_{p}^{S}(f_{\infty}/K,k,s,r):=\widetilde{\mathtt{M}}\big(\mathcal{L}_{K}^{S}(\mathbf{f})\big)\in{}\mathscr{A}\lri{U\times{}\Z_{p}
                   \times{}\Z_{p}}.             
\]
In the rest of this note, the \emph{(cyclotomic) central critical line} $\ell^{\mathrm{cc}}:=\{(k,s,r)\in{}U\times{}\Z_{p}\times{}\Z_{p}
 : r=1;\ s=k/2\}$ will play a key role.
Let $\mathfrak{l}$ be a prime of $K$ contained in $S$, which does not divide $p$.
Let $\ell\not=p$ be the rational prime lying below it: $\mathfrak{l}\cap{}\Z=\ell\Z$.
Define the \emph{central critical $\mathfrak{\ell}$-Euler factor of $f_{\infty}/K$} as
\[
      E_{\ell}(f_{\infty}/K,k):=\lri{1-\frac{a_{\ell}(k)}{\dia{\ell}^{k/2}\omega(\ell)}+
      \frac{\mathbf{1}_{N}(\ell)}{\ell}}\cdot{}
      \lri{1-\frac{\epsilon_{K}(\ell)a_{\ell}(k)}{\dia{\ell}^{k/2}\omega(\ell)}+\frac{\mathbf{1}_{ND_{K}}(\ell)}
      {\ell}}\in{}\mathscr{A}(\Z_{p}),
\]
where $\dia{\ell}:=\omega(\ell)^{-1}\ell\in{}1+p\Z_{p}$ is the projection of $\ell$ to principal units and 
$\mathbf{1}_{M}$ denotes the trivial Dirichlet character modulo $M$, for every $M\in{}\N$.
%As easily seen, there exists a unique \emph{central critical $\mathfrak{\ell}$-Euler factor}
%$E_{\ell}(f_{\infty}/K,k)\in{}\mathscr{A}(U)$ such that 
Then 
\[
          E_{\ell}(f_{\infty}/K,\kappa)=E_{\ell}(f_{\kappa},\ell^{-\kappa/2})\cdot{}
          E_{\ell}(f_{\kappa}\otimes\epsilon_{K},\ell^{-\kappa/2})
\]
for every classical point $\kappa\in{}U^{\mathrm{cl}}$, where 
$E_{\ell}(\ast,X)$ is the $\ell$-th Euler factor of the eigenform $\ast$,
so that the Hecke $L$-series of $\ast$ is given by the product 
$L(\ast,s)=\prod_{q\ \text{prime}}E_{q}(\ast,q^{-s})^{-1}$ (cf. Section $\ref{cpl}$).
Define the \emph{central critical  $S$-Euler factors} of $f_{\infty}/K$ by
\[                
                E_{S}(f_{\infty}/K,k):=\prod_{\ell|q_{K}ND_{K}}E_{\ell}(f_{\infty}/K,k),
\]
where the product runs over the rational primes lying below a prime $\mathfrak{l}\nmid{}p$ of $S$
(cf. Hypothesis $\ref{h3}$).
One has 
$E_{\ell}(f_{\infty}/K,2)\not=0$ for every $\ell|ND_{K}q_{K}$,
so that, up to shrinking the $p$-adic disc $U$ if necessary, 
one can assume that $E_{S}(f_{\infty}/K,k)\in{}\mathscr{A}(U)^{\times}$. 
Define finally  the \emph{central critical $p$-adic $L$-function} of $f_{\infty}/K$:
\begin{equation}\label{eq:deflcc}
             L_{p}^{\mathrm{cc}}(f_{\infty}/K,k):=E_{S}(f_{\infty}/K,k)^{-1}\cdot{}
             L_{p}^{S}(f_{\infty}/K,k,k/2,1)\in{}\mathscr{A}(U).
\end{equation}
%In other words: up to some Euler factor, $L_{p}^{\mathrm{cc}}(\mathbf{f}/K,k)$ is  the restriction of 
%$L_{p}^{S}(\mathbf{f}/K,k,k/2,1)$ to the central critical line 
%$\ell^{\mathrm{cc}}$.
Note that, while the definition of $L_{p}^{S}(f_{\infty}/K,k,s,r)$ depends on the choice of the isomorphism 
$\chi_{\mathrm{acy}} : D_{\infty}\cong{}1+p\Z_{p}$, the analytic function  $L_{p}^{\mathrm{cc}}(f_{\infty}/K,k)$
is independent of this choice.% (as we consider restriction to $r=1$).

\subsection{The central critical Selmer group: a Control Theorem}\label{criticalgrsec} 
%Let $K/\Q$ be an \emph{imaginary} quadratic field
%of discriminant $D_{K}$ coprime with $Np$, let $K_{\infty}\subset{}K(\mu_{p^{\infty}})$ be the cyclotomic $\Z_{p}$-extension,
%and let $\mathcal{K}/K$ be the maximal $\Z_{p}$-power extension of $K$.
%Then $\mathcal{K}=K_{\infty}\cdot{}K_{\infty}^{-}$ is the composition of $K_{\infty}$
%with the anticyclotomic (i.e. pro-dihedral) $\Z_{p}$-extension  $K_{\infty}^{-}/K$ of $K$.
%In particular: $K_{\infty}^{-}$ and $K_{\infty}$ are linearly disjoint and $\mathrm{Gal}(\mathcal{K}/K)\cong{}\Z_{p}^{2}$.
%Write $G_{\infty}:=\mathrm{Gal}(K_{\infty}/K)$ and $D_{\infty}:=\mathrm{Gal}(K_{\infty}^{-}/K)$,
%so that $\mathrm{Gal}(\mathcal{K}/K)=G_{\infty}\times{}D_{\infty}$ and 
%\[
%               \I_{\mathcal{K}}=\I\llbracket{}G_{\infty}\times{}D_{\infty}\rrbracket=\I_{\infty}\llbracket{}D_{\infty}\rrbracket,
%\]
%where we wrote $\I_{\infty}:=\I_{K_{\infty}}=\I\llbracket{}G_{\infty}\rrbracket$.
Fix topological generators $\gamma_{+}\in{}G_{\infty}$, $\gamma_{-}\in{}D_{\infty}$
and $\gamma_{\mathrm{wt}}\in{}\Gamma$, and write $\varpi_{?}:=\gamma_{?}-1$. 
We can (and will) assume that $\chi_{\mathrm{cy}}(\gamma_{+})=\gamma_{\mathrm{wt}}$,
where we write again $\chi_{\mathrm{cy}} : G_{\infty}\cong{}1+p\Z_{p}=\Gamma\subset{}\I^{\times}$
for the isomorphism induced by the $p$-adic cyclotomic character. Let
\[
               \Theta_{K}^{+} : \mathrm{Gal}(\mathcal{K}/K)=G_{\infty}\times{}D_{\infty}\twoheadrightarrow{}G_{\infty}
               \stackrel{\chi_{\mathrm{cy}}}{\cong}\Gamma\stackrel{\sqrt{\cdot{}}}{\longrightarrow}\Gamma\stackrel{[\cdot{}]}{\longrightarrow}\I^{\times}
\]  
be the \emph{cyclotomic central critical Greenberg character}. We can extend uniquely 
$\Theta_{K}^{+}$ to a morphism of $\I$-algebras, denoted again by the same symbol,
$\Theta_{K}^{+} : \I_{\mathcal{K}}\fre{}\I$. As easily seen, its kernel $\mathfrak{P}^{\mathrm{cc}}$ is given by
\[
              \mathfrak{P}^{\mathrm{cc}}:=\ker\big(\Theta_{K}^{+} : \I_{\mathcal{K}}\twoheadrightarrow{}\I\big)=
              (\varpi_{\mathrm{cc}},\varpi_{-})\cdot{}\I_{\mathcal{K}};
              \ \ \varpi_{\mathrm{cc}}:=[\gamma_{\mathrm{wt}}]-\gamma_{+}^{2}\in{}\I_{\mathcal{K}},
\] 
i.e. $\mathfrak{P}^{\mathrm{cc}}$ is generated by $\varpi_{-}$ and $\varpi_{\mathrm{cc}}$. In analogy with the definitions above,
we define the \emph{(cyclotomic) $S$-primitive central critical (non-strict) Greenberg Selmer group of $\mathbf{f}/K$} by
\[
              \mathrm{Sel}^{S,\mathrm{cc}}_{\Q_{\infty}}(\mathbf{f}/K):=
              \ker\lri{H^{1}(G_{K,S},\mathbb{T}_{\mathbf{f}}\otimes_{\I}\I^{\ast})
              \longrightarrow{}\prod_{v|p}H^{1}(I_{v},\mathbb{T}_{\mathbf{f},v}^{-}\otimes_{\I}\I^{\ast})}.
\] 
Here $\mathbb{T}_{\mathbf{f}}=(\mathbb{T}_{\mathbf{f}},\mathbb{T}_{\mathbf{f}}^{+})$ is Hida's half-twisted representation defined in Section 
$(\ref{ppq})$ and $S$ is as in Section $\ref{sup-adic}$.
% a set of finite primes of $K$ containing all the prime divisors of $NpD_{K}$.
Moreover, 
%for every prime $v|p$ of $K$, we have an exact sequence $0\fre{}\mathbb{T}_{\mathbf{f},v}^{+}\fre{}\mathbb{T}_{\mathbf{f}}
%\fre{}\mathbb{T}_{\mathbf{f},v}^{-}\fre{}0$
%of $\I[G_{K_{v}}]$-modules (with $\mathbb{T}_{\mathbf{f},v}^{\pm}=\mathbb{T}_{\mathbf{f}}^{\pm}$
%as $\I$-modules) defined in complete analogy with $(\ref{eq:hhhnnn})$,
%and 
the arrow refers again to $\prod_{v|p}p_{v\ast}^{-}\circ{}\mathrm{res}_{v}$,
where $p_{v}^{-} : \mathbb{T}_{\mathbf{f}}\twoheadrightarrow{}\mathbb{T}_{\mathbf{f},v}^{-}$
is the projection introduced in equation $(\ref{eq:exact for general v})$
\footnote{We should keep in mind that the cyclotomic variable plays a non trivial role in the definition 
of Hida's half-twisted representation $\mathbb{T}_{\mathbf{f}}$. 
%Besides its very definition,
%this point is clarified in the proof of the following Lemma.  
This explains the appearance of the subscript $\Q_{\infty}$  
in the notation  $\mathrm{Sel}^{S,\mathrm{cc}}_{\Q_{\infty}}(\mathbf{f}/K)$.}.
Denote by $X_{\Q_{\infty}}^{S,\mathrm{cc}}(\mathbf{f}/K)$ the Pontrjagin dual of $\mathrm{Sel}_{\Q_{\infty}}^{S,\mathrm{cc}}(\mathbf{f}/K)$:
\[
              X^{S,\mathrm{cc}}_{\Q_{\infty}}(\mathbf{f}/K):=\Hom{\Z_{p}}\lri{\mathrm{Sel}_{\Q_{\infty}}^{S,\mathrm{cc}}
              (\mathbf{f}/K),\divp}.
\]
With these notations, and the ones introduced in Section  $\ref{greeselgen}$, we have the following perfect control theorem.
%(We remind the reader that  Hypotheses $\ref{h2}$ and $\ref{h4}$ are  assumed  in this section.) 

\begin{proposition}\label{speccgr} There exists a canonical isomorphism of $\I$-modules
\[
                     X_{\mathcal{K}}^{S}(\mathbf{f})\otimes_{\I_{\mathcal{K}}}\I_{\mathcal{K}}\big/\mathfrak{P}^{\mathrm{cc}}\cong{}
                     X_{\Q_{\infty}}^{S,\mathrm{cc}}(\mathbf{f}/K).
\]
\end{proposition}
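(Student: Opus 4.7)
By Pontrjagin duality, the asserted isomorphism is equivalent (via the dual of the natural map in $(\ref{eq:controleq})$) to producing a canonical identification
\[\mathrm{Sel}_{\mathcal{K}}^{S}(\mathbf{f},\mathfrak{P}^{\mathrm{cc}})\cong\mathrm{Sel}_{\mathcal{K}}^{S}(\mathbf{f})[\mathfrak{P}^{\mathrm{cc}}],\]
coupled with the identification of the left-hand side with $\mathrm{Sel}_{\Q_{\infty}}^{S,\mathrm{cc}}(\mathbf{f}/K)$. The plan splits into two tasks: \emph{(i)} a Galois-module computation that recognizes the specialization along $\Theta_K^+$ as the half-twisted representation $\mathbb{T}_{\mathbf{f}}$, and \emph{(ii)} a snake-lemma control argument for the two-generator ideal $\mathfrak{P}^{\mathrm{cc}}=(\varpi_{-},\varpi_{\mathrm{cc}})$.

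For \emph{(i)}, the definition of $\Theta_K^+$ forces its composition with the tautological character $\varepsilon_{\mathcal{K}}:G_{K}\twoheadrightarrow\mathrm{Gal}(\mathcal{K}/K)\subset\I_{\mathcal{K}}^{\times}$ to coincide with the Galois character $[\chi_{\mathrm{cy}}]^{1/2}|_{G_{K}}$ (the anticyclotomic factor is killed, and the choice $\chi_{\mathrm{cy}}(\gamma_{+})=\gamma_{\mathrm{wt}}$ converts the relation $\varpi_{\mathrm{cc}}=0$ into the square-root of the weight character). Consequently
\[T_{\mathbf{f}}(\mathcal{K})\otimes_{\I_{\mathcal{K}},\Theta_K^+}\I \;\cong\; T_{\mathbf{f}}\otimes_{\I}\I\bigl([\chi_{\mathrm{cy}}]^{-1/2}\bigr) \;=\; \mathbb{T}_{\mathbf{f}}\]
as $\I[G_K]$-modules, and analogously $T_{\mathbf{f}}(\mathcal{K})_{v}^{\pm}\otimes_{\I_{\mathcal{K}},\Theta_K^+}\I\cong\mathbb{T}_{\mathbf{f},v}^{\pm}$ at every prime $v|p$ of $K$. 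Combining with the canonical $\I_{\mathcal{K}}$-linear identification $\I_{\mathcal{K}}^{\ast}[\mathfrak{P}^{\mathrm{cc}}]\cong\I^{\ast}$, one obtains $T_{\mathbf{f}}(\mathcal{K})\otimes_{\I_{\mathcal{K}}}\I_{\mathcal{K}}^{\ast}[\mathfrak{P}^{\mathrm{cc}}]\cong\mathbb{T}_{\mathbf{f}}\otimes_{\I}\I^{\ast}$, and likewise for the $-$-quotients. Substituting into the definition $(\ref{eq:seldef})$ recovers verbatim the defining kernel of $\mathrm{Sel}_{\Q_{\infty}}^{S,\mathrm{cc}}(\mathbf{f}/K)$.

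For \emph{(ii)}, set $T:=T_{\mathbf{f}}(\mathcal{K})\otimes_{\I_{\mathcal{K}}}\I_{\mathcal{K}}^{\ast}$ and $T_{v}^{-}:=T_{\mathbf{f}}(\mathcal{K})_{v}^{-}\otimes_{\I_{\mathcal{K}}}\I_{\mathcal{K}}^{\ast}$, and consider the commutative diagram with exact rows
\[\begin{array}{ccccccc}
0 & \to & \mathrm{Sel}_{\mathcal{K}}^{S}(\mathbf{f},\mathfrak{P}^{\mathrm{cc}}) & \to & H^{1}(G_{K,S},T[\mathfrak{P}^{\mathrm{cc}}]) & \to & \prod_{v|p}H^{1}(I_{v},T_{v}^{-}[\mathfrak{P}^{\mathrm{cc}}]) \\
 & & \downarrow & & \downarrow\alpha & & \downarrow\beta \\
0 & \to & \mathrm{Sel}_{\mathcal{K}}^{S}(\mathbf{f})[\mathfrak{P}^{\mathrm{cc}}] & \to & H^{1}(G_{K,S},T)[\mathfrak{P}^{\mathrm{cc}}] & \to & \prod_{v|p}H^{1}(I_{v},T_{v}^{-})[\mathfrak{P}^{\mathrm{cc}}]
\end{array}\]
induced by the inclusion $\I_{\mathcal{K}}^{\ast}[\mathfrak{P}^{\mathrm{cc}}]\hookrightarrow\I_{\mathcal{K}}^{\ast}$. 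A direct diagram chase shows that if $\alpha$ is bijective and $\beta$ is injective, then the leftmost vertical arrow is an isomorphism. Since $(\varpi_{-},\varpi_{\mathrm{cc}})$ is a regular sequence in $\I_{\mathcal{K}}$ and multiplication by each $\varpi_{\bullet}$ is surjective on the cofree modules $\I_{\mathcal{K}}^{\ast}$, $T$, and $T_{v}^{-}$, the long exact cohomology sequences obtained by killing one generator at a time express $\ker\alpha$, $\mathrm{coker}\,\alpha$, and $\ker\beta$ in terms of $H^{0}$-groups of the iterated quotients of $T$ and $T_{v}^{-}$.

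The main obstacle is this $H^{0}$ analysis. Globally, the residual irreducibility of $\overline{\rho}_{\mathbf{f}}$ (Hypothesis $\ref{h2}$) propagates through each specialization step, forcing the relevant $G_{K,S}$-invariants to vanish and yielding the bijectivity of $\alpha$. Locally at $v|p$, one uses the explicit description $(\ref{eq:addequation})$: on inertia $\mathbf{a}_{p}^{\ast}$ acts trivially while $[\kappa_{\mathrm{cy}}]^{-1/2}$ is non-trivial ($p$ odd and $\mathcal{K}/K$ totally ramified at $v$, so $\varepsilon_{\mathcal{K},v}|_{I_{v}}$ surjects onto the decomposition of $\mathrm{Gal}(\mathcal{K}/K)$ at $v$), and a careful tracking of the resulting $\mathfrak{p}_{f}$-torsion suffices to conclude that $\beta$ is injective. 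The snake lemma then completes the proof.
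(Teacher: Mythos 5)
Your step~(i) (identifying the $\Theta_K^+$-specialization of $T_{\mathbf{f}}(\mathcal{K})$ with $\mathbb{T}_{\mathbf{f}}$, via $\Theta_K^+\circ\varepsilon_{\mathcal{K}}^{-1}=[\chi_{\mathrm{cy}}]^{-1/2}$ on $G_K$) is correct and matches what the paper does. Your global $H^0$-analysis is also fine: irreducibility of $\overline{\rho}_{\mathbf{f}}|_{G_K}$ kills the relevant invariants, making the middle vertical map bijective. Where you depart from the paper is in trying to run the snake-lemma control against the two-generator ideal $\mathfrak{P}^{\mathrm{cc}}=(\varpi_-,\varpi_{\mathrm{cc}})$ in one pass. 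The paper does not do this: it first specializes along $\varpi_-$ by \emph{citing} \cite[Prop.~3.9]{S-U} for the anticyclotomic control theorem, reducing to $\I_{K_\infty}$, and only then does the hands-on snake lemma for the remaining cyclotomic generator $\varpi_{\mathrm{cc}}$. Your self-contained two-step-at-once version would be a genuine alternative route, but as written it contains an error.

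The error is the parenthetical ``$\mathcal{K}/K$ totally ramified at $v$, so $\varepsilon_{\mathcal{K},v}|_{I_v}$ surjects onto the decomposition of $\mathrm{Gal}(\mathcal{K}/K)$ at $v$.'' This is false. Since $p$ splits in $K$ we have $K_v\cong\Q_p$, and the inertia subgroup of $\mathrm{Gal}(\mathcal{K}/K)\cong\Z_p^2$ at $v$ has $\Z_p$-rank one (only one ramified $\Z_p$-direction above $\Q_p$), whereas the decomposition group at $v$ has rank two. So $\varepsilon_{\mathcal{K},v}(I_v)$ is a rank-one subgroup --- the graph of a non-trivial map $G_\infty\to D_\infty$ --- and $\mathcal{K}/K$ is not totally ramified at $v$. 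The obstruction to control is $\ker(\gamma)$, which after (\ref{eq:nbnbnbnbnbnb})-type analysis reduces to $H^0\bigl(I_v,\,T_{\mathbf{f}}(\mathcal{K})_v^-\otimes\I_{\mathcal{K}}^\ast\bigr)\otimes_{\I_{\mathcal{K}}}\I_{\mathcal{K}}/\varpi_\bullet$, and the correct reason this vanishes is not ``$\mathfrak{p}_f$-torsion tracking'' at all: it is that $H^0(I_v,-)$ is the Pontrjagin dual of $\I_{\mathcal{K}}/(\sigma_0-1)$ for the topological generator $\sigma_0$ of the rank-one image of inertia, and that quotient is an integral domain in which the images of $\varpi_-$ and $\varpi_{\mathrm{cc}}$ are non-zero; hence the dual is divisible by those elements and the cokernel term dies. (This is exactly the mechanism the paper uses for the $\varpi_{\mathrm{cc}}$ step, where total ramification of the cyclotomic $K_\infty/K$ \emph{is} true and the computation is transparent.) You should either replace the false ramification claim by the correct identification of the image of inertia and a divisibility argument as above, or simply follow the paper in invoking \cite[Prop.~3.9]{S-U} for the $\varpi_-$-step and reserve the explicit computation for $\varpi_{\mathrm{cc}}$. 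The phrase ``careful tracking of $\mathfrak{p}_f$-torsion'' should be removed: $\mathfrak{p}_f$ plays no role in this proposition.
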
 
\begin{proof} Let $\mathfrak{a}_{1}=(\varpi_{-})\in{}\mathrm{Spec}(\mathbb{I}_{\mathcal{K}})$ and  $\mathfrak{a}_{2}:=
(\varpi_{\mathrm{cc}})\in{}\mathrm{Spec}(\mathbb{I}_{K_{\infty}})$. (We remind that
$\mathcal{K}=K_{\infty}\cdot{}K_{\infty}^{-}$ is the $\Z_{p}^{2}$-extension of $K$ and  $K_{\infty}/K$ is the cyclotomic 
$\Z_{p}$-extension).
As $\mathbb{I}_{\mathcal{K}}/\mathfrak{a}_{1}\cong{}\mathbb{I}_{K_{\infty}}$
and 
$T_{\mathbf{f}}(\mathcal{K})/\mathfrak{a}_{1}\cong{}T_{\mathbf{f}}(K_{\infty})$:
\[
         T_{\mathbf{f}}(\mathcal{K})\otimes_{\mathbb{I}_{\mathcal{K}}}\mathbb{I}_{\mathcal{K}}^{\ast}[\mathfrak{a}_{1}]
         \cong{}T_{\mathbf{f}}(\mathcal{K})/\mathfrak{a}_{1}\otimes_{\mathbb{I}_{\mathcal{K}}/\mathfrak{a}_{1}}
         \mathbb{I}_{K_{\infty}}^{\ast}
         \cong{}T_{\mathbf{f}}(K_{\infty})\otimes_{\mathbb{I}_{K_{\infty}}}\mathbb{I}_{K_{\infty}}^{\ast},
\]
and similarly $T_{\mathbf{f}}(\mathcal{K})_{v}^{-}\otimes_{\I_{\mathcal{K}}}\I_{\mathcal{K}}^{\ast}[\mathfrak{a}_{1}]
\cong{}T_{\mathbf{f}}(K_{\infty})_{v}^{-}\otimes_{\I_{K_{\infty}}}\I_{K_{\infty}}^{\ast}$
%$T_{\mathbf{f}}(\mathcal{K})_{v}^{+}/\mathfrak{a}_{1}\cong{}T_{\mathbf{f}}(K_{\infty})_{v}^{+}$
for every $v|p$.  In particular $\mathrm{Sel}_{\mathcal{K}}^{S}(\mathbf{f},\mathfrak{a}_{1})$
is canonically isomorphic to $\mathrm{Sel}_{K_{\infty}}^{S}(\mathbf{f})$.
Moreover, by \cite[Proposition 3.9]{S-U}, the maps $(\ref{eq:controleq})$ induce  isomorphisms
\begin{equation}\label{eq:ggg}
            \mathrm{Sel}_{K_{\infty}}^{S}(\mathbf{f})
            %\cong{}\mathrm{Sel}_{\mathcal{K}}^{S}(\mathbf{f},\mathfrak{a}_{1})
            \cong{}\mathrm{Sel}_{\mathcal{K}}^{S}(\mathbf{f})[\mathfrak{a}_{1}];\ \  \ 
            X_{\mathcal{K}}^{S}(\mathbf{f})\otimes_{\I_{\mathcal{K}}}\I_{\mathcal{K}}/\mathfrak{a}_{1}\cong{}X_{K_{\infty}}^{S}(\mathbf{f}).
\end{equation}
Similarly, $\Theta_{K}^{+}$ induces an isomorphism:  $\mathbb{I}_{K_{\infty}}/\mathfrak{a}_{2}\cong{}\mathbb{I}$, an isomorphism 
of $\I[G_{K,S}]$-modules: 
$T_{\mathbf{f}}(K_{\infty})/\mathfrak{a}_{2}\cong{}\mathbb{T}_{\mathbf{f}}$
and isomorphisms of $\I[G_{K_{v}}]$-modules: $T_{\mathbf{f}}(K_{\infty})_{v}^{\pm}/\mathfrak{a}_{2}\cong{}\mathbb{T}_{\mathbf{f},v}^{\pm}$
for every $v|p$.
(Indeed, write $\Theta_{K} : \I_{\infty}=\I_{K_{\infty}}\twoheadrightarrow{}\I$
for the `restriction' of $\Theta_{K}^{+}$ to $\I_{\infty}$.
Then $\Theta_{K}\circ{}\varepsilon_{K_{\infty}}^{-1}=[\chi_{\mathrm{cy}}]^{-1/2}$
on $G_{K,S}$,
%(by the very definitions),
so that 
$$T_{\mathbf{f}}(K_{\infty})/\mathfrak{a}_{2}\cong{}T_{\mathbf{f}}(K_{\infty})\otimes_{\I_{\infty},\Theta_{K}}\I=
T_{\mathbf{f}}\otimes_{\I}\I_{\infty}\big(\varepsilon_{K_{\infty}}^{-1}\big)\otimes_{\I_{\infty},\Theta_{K}}\I
\cong{}T_{\mathbf{f}}\otimes_{\I}[\chi_{\mathrm{cy}}]^{-1/2}=\mathbb{T}_{\mathbf{f}}.$$
The same argument justify the statement for the $\pm$-parts at a prime $v|p$.)
As above (i.e. retracing the definitions), this gives a canonical isomorphism of Selmer groups
\begin{equation}\label{eq:445}
                      \mathrm{Sel}_{\Q_{\infty}}^{S,\mathrm{cc}}(\mathbf{f}/K)\cong{}\mathrm{Sel}_{K_{\infty}}^{S}(\mathbf{f},\mathfrak{a}_{2}).
\end{equation}
Let us consider the following  commutative diagram with (tautological) exact rows:
\[\small\small
                \xymatrix{      0 \ar[r]   & \mathrm{Sel}_{K_{\infty}}^{S}(\mathbf{f},\mathfrak{a}_{2}) \ar[r] \ar[d]_{\alpha}
                                                & H^{1}(G_{K,S},T_{\mathbf{f}}(K_{\infty})\otimes_{\I_{K_{\infty}}}\I_{K_{\infty}}^{\ast}[\mathfrak{a}_{2}]) \ar[d]_{\beta}
                                                \ar[r] & \prod_{v|p}H^{1}(I_{v},T_{\mathbf{f}}(K_{\infty})_{v}^{-}
                                                \otimes_{\I_{K_{\infty}}}\I_{K_{\infty}}^{\ast}[\mathfrak{a}_{2}])
                                                %^{G_{K_{v}}} 
                                                \ar[d]_{\gamma} 
                                                \\
                                                0\ar[r] & \lri{\mathrm{Sel}_{K_{\infty}}^{S}(\mathbf{f})}[\mathfrak{a}_{2}] \ar[r]
                                                 &  \Big(H^{1}\big(G_{K,S},T_{\mathbf{f}}(K_{\infty})\otimes_{\I_{K_{\infty}}}\I_{K_{\infty}}^{\ast}\big)\Big)[\mathfrak{a}_{2}]   
                                                 \ar[r] & \lri{\prod_{v|p}
                                                 H^{1}\big(I_{v},T_{\mathbf{f}}(K_{\infty})_{v}^{-}
                                                \otimes_{\I_{K_{\infty}}}\I_{K_{\infty}}^{\ast}\big)
                                                %^{G_{K_{v}}}
                                                }[\mathfrak{a}_{2}],
                                                            }
\]  
where the vertical maps are the natural ones induced by the inclusion $\I_{K_{\infty}}^{\ast}[\mathfrak{a}_{2}]\subset{}\I_{K_{\infty}}^{\ast}$
(cf. $(\ref{eq:controleq})$). We claim that $\alpha$ is an isomorphism of $\I$-modules:
\begin{equation}\label{eq:333999}
             \alpha : \mathrm{Sel}_{K_{\infty}}^{S}(\mathbf{f},\mathfrak{a}_{2})\cong{}\mathrm{Sel}_{K_{\infty}}^{S}(\mathbf{f})[\mathfrak{a}_{2}].
\end{equation}
The map $\beta$ sits into a short exact sequence 
(arising form $0\fre{}\I_{K_{\infty}}^{\ast}[\mathfrak{a}_{2}]\fre{}\I_{K_{\infty}}^{\ast}\fre{\varpi_{\mathrm{cc}}}\I^{\ast}_{K_{\infty}}\fre{}0$):
\begin{align*}
  0\fre{}     H^{0}(G_{K,S},T_{\mathbf{f}}(K_{\infty})\otimes_{\I_{K_{\infty}}}\I_{K_{\infty}}^{\ast})\big/\varpi_{\mathrm{cc}}
                 \fre{}  &
                 H^{1}(G_{K,S},T_{\mathbf{f}}(K_{\infty})\otimes_{\I_{K_{\infty}}}\I_{K_{\infty}}^{\ast}[\mathfrak{a}_{2}]) \\
   \stackrel{\beta}{\longrightarrow{}}H^{1}(G_{K,S},T_{\mathbf{f}}(K_{\infty})&\otimes_{\I_{K_{\infty}}}\I_{K_{\infty}}^{\ast})[\mathfrak{a}_{2}]\fre{}0.
\end{align*}
%As explained, e.g. in \cite[Lemma 2.8.1]{Skinner}, 
Hypotheses $\ref{h2}$ and  $\ref{h3}$ 
imply that the restriction of $\overline{\rho}_{\mathbf{f}}$ to $G_{K}$ is irreducible. Then the first $H^{0}$ vanishes,
and $\beta$ is an isomorphism. By the Snake Lemma, the morphism $\alpha$ is injective, and its cockerel is a 
sub-module of $\ker(\gamma)$. To prove the claim $(\ref{eq:333999})$ it is then sufficient to show that
\begin{equation}\label{eq:nnn777}
                        \ker(\gamma)=0.
\end{equation}
Looking again at the exact $I_{v}$-cohomology sequence arising from  $0\fre{}\I_{K_{\infty}}^{\ast}[\mathfrak{a}_{2}]\fre{}
\I_{K_{\infty}}^{\ast}\fre{\varpi_{\mathrm{cc}}}\I_{K_{\infty}}^{\ast}\fre{}0$, we have
\begin{equation}\label{eq:nbnbnbnbnbnb}
             \ker(\gamma)\cong{}\prod_{v|p}
             %\lri{
             H^{0}(I_{v},T_{\mathbf{f}}(K_{\infty})_{v}^{-}\otimes_{\I_{K_{\infty}}}\I_{K_{\infty}}^{\ast})
             \otimes_{\I_{K_{\infty}}}\I_{K_{\infty}}/\varpi_{\mathrm{cc}}%}^{G_{K_{v}}}
             .
\end{equation}
Note that $T_{\mathbf{f}}(K_{\infty})_{v}^{-}\otimes_{\I_{K_{\infty}}}\I_{K_{\infty}}^{\ast}
\cong{}\I_{K_{\infty}}^{\ast}\big(\mathbf{a}_{p}^{\ast}\cdot{}\varepsilon_{K_{\infty}}^{-1}\big)$ (cf. Sec. $\ref{ppq}$).
%in particular the action of $I_{v}=I_{\Q_{p}}$ factors through its  image  in $\mathrm{Gal}(K_{v,\infty}/K_{v})%\cong{}\mathrm{Gal}(\Q_{p,\infty}/\Q_{p})\cong{}G_{\infty}\text{`}=\gamma_{+}^{\Z_{p}}%\text{'}$, where $K_{v,\infty}/K$ and $\Q_{p,\infty}/\Q_{p}$
%denote the cyclotomic $\Z_{p}$-extension  of  $K_{v}=\Q_{p}$
%($p$ splits in $K$). 
Since  
$\I_{K_{\infty}}/(\gamma_{+}-1)\I_{K_{\infty}}\cong{}\I$, one finds  
\[
      H^{0}(I_{v},T_{\mathbf{f}}(K_{\infty})_{v}^{-}\otimes_{\I_{K_{\infty}}}\I_{K_{\infty}}^{\ast})=
\I_{K_{\infty}}^{\ast}(\mathbf{a}_{p}^{\ast})[\gamma_{+}-1]=\I^{\ast}(\mathbf{a}_{p}^{\ast})
\]
(recall that $\mathbf{a}_{p}^{\ast}$ is the unramified character on $G_{\Q_{p}}$
sending an arithmetic Frobenius to $\mathbf{a}_{p}$). Finally, note that $\varpi_{\mathrm{cc}}:=[\gamma_{\mathrm{wt}}]-\gamma_{+}^{2}$
acts as $\varpi_{\mathrm{wt}}=[\gamma_{\mathrm{wt}}]-1$ on $\I^{\ast}=\I^{\ast}_{K_{\infty}}[\gamma_{+}-1]$, 
so that $\I^{\ast}$ is $\varpi_{\mathrm{cc}}$-divisible, and hence 
\[
 H^{0}(I_{v},T_{\mathbf{f}}(K_{\infty})_{v}^{-}\otimes_{\I_{K_{\infty}}}\I_{K_{\infty}}^{\ast})
             \otimes_{\I_{K_{\infty}}}\I_{K_{\infty}}/\varpi_{\mathrm{cc}}=0
\]
for every prime $v|p$ of $K$. Together with $(\ref{eq:nbnbnbnbnbnb})$, this implies that   $(\ref{eq:nnn777})$
holds true, and then proves  the claim $(\ref{eq:333999})$. 
When combined with the isomorphism  $(\ref{eq:445})$,  this gives  canonical isomorphisms of $\I$-modules
\[
              \mathrm{Sel}_{\Q_{\infty}}^{S,\mathrm{cc}}(\mathbf{f}/K)\cong{}\mathrm{Sel}_{K_{\infty}}^{S}(\mathbf{f})[\mathfrak{a}_{2}];\ \ \ 
              X_{\Q_{\infty}}^{S,\mathrm{cc}}(\mathbf{f}/K)\cong{}X_{K_{\infty}}^{S}(\mathbf{f})/\mathfrak{a}_{2}.
\]
Since $\mathfrak{P}^{\mathrm{cc}}=(\mathfrak{a}_{1},\mathfrak{a}_{2})\cdot{}\I_{\mathcal{K}}$,
combined  with the second isomorphism in $(\ref{eq:ggg})$, this concludes the proof.
\end{proof}

\subsection{Specialising Skinner-Urban to the central critical line}
We can finally state the following corollary of the theorem of Skinner-Urban. For every $f(k)\in{}\mathscr{A}(U)$, write 
$\mathrm{ord}_{k=2}f(k)\in{}\mathbf{N}$ to denote the order of vanishing of $f(k)$ at $k=2$. 
%Write also 
%\[
%   \mathfrak{p}_{f}:=\ker\big(\phi_{2} : \mathbb{I}\fre{}\overline{\Q}_{p}\big);
%\]    
%it is a height-one prime ideal of $\mathbb{I}$, and the localisation $\mathbb{I}_{\mathfrak{p}_{f}}$
%is a finite \'etale extension of the localisation of $\iw$ at the principal prime ideal  $\mathfrak{p}_{f}\cap{}\iw=(\varpi_{\mathrm{wt}})$  \cite{H-2}.
Given a finite $\mathbb{I}$-module  $M$, write as usual $\mathrm{length}_{\mathfrak{p}_{f}}(M)$
for the length of the localisation $M_{\mathfrak{p}_{f}}$ over the discrete valuation ring $\mathbb{I}_{\mathfrak{p}_{f}}$.

\begin{cor}\label{corsumain} Assume that Hypotheses $\ref{h2}$, $\ref{h3}$ and $\ref{h4}$ are satisfied. Then 
$$\mathrm{ord}_{k=2}L_{p}^{\mathrm{cc}}(f_{\infty}/K,k)\leq{}\mathrm{length}_{\mathfrak{p}_{f}}
\big(X_{\Q_{\infty}}^{S,\mathrm{cc}}(\mathbf{f}/K)\big).$$
\end{cor}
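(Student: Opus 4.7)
The plan is to apply $\M\circ\Theta_{K}^{+}$ to both sides of the Skinner--Urban divisibility (Theorem \ref{msu}): the character $\Theta_{K}^{+}$ cuts out the central critical specialisation on the algebraic side (via the control theorem, Proposition \ref{speccgr}), while the Mellin transform $\M$ converts the algebraic inequality into an analytic bound on $\mathrm{ord}_{k=2}$.

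\textbf{Analytic identification.} First I would trace through the construction of $\widetilde{\M}$ to verify the identity
\[
\M\!\lri{\Theta_{K}^{+}(\mathcal{L}_{K}^{S}(\mathbf{f}))} = E_{S}(f_{\infty}/K,k)\cdot L_{p}^{\mathrm{cc}}(f_{\infty}/K,k)\in\mathscr{A}(U).
\]
This is essentially bookkeeping: $\Theta_{K}^{+}$ kills $D_{\infty}$ and sends $\gamma_{+}\in G_{\infty}$ to $[\gamma_{\mathrm{wt}}]^{1/2}$, which under $\widetilde{\M}$ corresponds precisely to the substitution $r=1$, $s=k/2$ in $L_{p}^{S}(f_{\infty}/K,k,s,r)$, so the claim reduces to the definition \eqref{eq:deflcc} and the invertibility of $E_{S}(f_{\infty}/K,k)$ in $\mathscr{A}(U)$. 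Using that $\varpi_{\mathrm{wt}}$ is a uniformiser of the DVR $\I_{\mathfrak{p}_{f}}$ by \eqref{eq:unif} and that $\M(\varpi_{\mathrm{wt}})$ has a simple zero at $k=2$, one has $\mathrm{ord}_{k=2}\M(x)=\mathrm{ord}_{\mathfrak{p}_{f}}(x)$ for all $x\in\I$, hence
\[
\mathrm{ord}_{k=2}L_{p}^{\mathrm{cc}}(f_{\infty}/K,k)=\mathrm{ord}_{\mathfrak{p}_{f}}\!\lri{\Theta_{K}^{+}(\mathcal{L}_{K}^{S}(\mathbf{f}))}.
\]

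\textbf{Algebraic bound.} Since $\mathrm{Ch}_{\mathcal{K}}^{S}(\mathbf{f})\subseteq(\mathcal{L}_{K}^{S}(\mathbf{f}))$ by Theorem \ref{msu}, applying $\Theta_{K}^{+}$ preserves the containment, so
\[
\mathrm{ord}_{\mathfrak{p}_{f}}\!\lri{\Theta_{K}^{+}(\mathcal{L}_{K}^{S}(\mathbf{f}))}\leq \mathrm{ord}_{\mathfrak{p}_{f}}\!\lri{\Theta_{K}^{+}(\mathrm{Ch}_{\mathcal{K}}^{S}(\mathbf{f}))}.
\]
To compare the right-hand side with $\mathrm{length}_{\mathfrak{p}_{f}}(X_{\Q_{\infty}}^{S,\mathrm{cc}}(\mathbf{f}/K))$, I would specialise in two principal stages using $\mathfrak{P}^{\mathrm{cc}}=(\varpi_{-},\varpi_{\mathrm{cc}})$: first reduce modulo $\varpi_{-}$ (obtaining $X_{K_{\infty}}^{S}(\mathbf{f})$), then modulo $\varpi_{\mathrm{cc}}$ (obtaining $X_{\Q_{\infty}}^{S,\mathrm{cc}}(\mathbf{f}/K)$ by Proposition \ref{speccgr}). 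At each step one invokes the standard commutative-algebra fact that, for a finitely generated torsion module $M$ over a regular local ring $R$ and $f\in R$ regular on $M$ with $M/fM$ still $R/f$-torsion, the image of $\mathrm{Ch}_{R}(M)$ in $R/f$ divides $\mathrm{Ch}_{R/f}(M/fM)$. Iterating and taking the $\mathfrak{p}_{f}$-valuation gives
\[
\mathrm{ord}_{\mathfrak{p}_{f}}\!\lri{\Theta_{K}^{+}(\mathrm{Ch}_{\mathcal{K}}^{S}(\mathbf{f}))}\leq \mathrm{length}_{\mathfrak{p}_{f}}\!\lri{X_{\Q_{\infty}}^{S,\mathrm{cc}}(\mathbf{f}/K)},
\]
and chaining the three displayed inequalities yields the corollary.

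The \emph{main obstacle} is the last step: verifying that the two successive principal specialisations $\varpi_{-},\varpi_{\mathrm{cc}}$ are regular on $X_{\mathcal{K}}^{S}(\mathbf{f})$ (or at least avoid its associated primes of the wrong sort), so that the divisibility of characteristic ideals passes cleanly to the quotient; and confirming that $X_{\Q_{\infty}}^{S,\mathrm{cc}}(\mathbf{f}/K)$ is in fact $\I_{\mathfrak{p}_{f}}$-torsion (so the length is finite). The first point will use that $\overline{\rho}_{\mathbf{f}}|_{G_{K}}$ is irreducible (as in the proof of Proposition \ref{speccgr}) to kill unwanted $H^{0}$ terms in the specialisation exact sequences, while the second will follow from Kato's Euler system bound for the cyclotomic $\Z_{p}$-extension combined with the control computations of Section \ref{criticalgrsec}.
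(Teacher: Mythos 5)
Your proposal is correct and follows essentially the same route as the paper: specialise the Skinner--Urban divisibility via $\Theta_{K}^{+}$, identify the specialised dual Selmer group with $X_{\Q_{\infty}}^{S,\mathrm{cc}}(\mathbf{f}/K)$ by Proposition~\ref{speccgr}, invoke the standard commutative-algebra fact about characteristic ideals under principal specialisation (as in the proof of Corollary~3.8 of \cite{S-U}), and translate $\mathrm{ord}_{\mathfrak{p}_{f}}$ into $\mathrm{ord}_{k=2}$ via the Mellin transform and $(\ref{eq:unif})$, $(\ref{eq:lolo})$, $(\ref{eq:p-p-})$. The ``main obstacles'' you flag are in fact already disposed of: Proposition~\ref{speccgr} gives the two-step specialisation as a clean isomorphism (so no separate regularity check is needed, and the containment of characteristic ideals holds without it), and the inequality is vacuously true if the length is infinite, so torsion need not be verified separately for this corollary.
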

\begin{proof} Combining  Skinner-Urban's Theorem $\ref{msu}$ with Proposition $\ref{speccgr}$, we easily deduce that 
the characteristic ideal of $X_{\Q_{\infty}}^{S,\mathrm{cc}}(\mathbf{f}/K)$ is contained in the principal ideal generated 
by the projection $\mathcal{L}_{K}^{S}(\mathbf{f})\ \mathrm{mod}\ \mathfrak{P}^{\mathrm{cc}}$ (cf. 
the proof of \cite[Corollary 3.8]{S-U}). In other words 
\[
       \Big\{\mathrm{Characteristic\ ideal\ of\ } X_{\Q_{\infty}}^{S,\mathrm{cc}}(\mathbf{f}/K) \Big\} \subset{}\Big(\mathcal{L}_{K}^{S}(\mathbf{f})\ \mathrm{mod}\ \mathfrak{P}^{\mathrm{cc}}\Big).
\]
In particular, writing $\mathrm{ord}_{\mathfrak{p}_{f}} : \mathrm{Frac}(\mathbb{I})\fre{}\Z\cup\{\infty\}$ for the valuation 
 attached to $\mathfrak{p}_{f}$,  
\[
                      \mathrm{ord}_{\mathfrak{p}_{f}}\lri{\mathcal{L}_{K}^{S}(\mathbf{f})\ \mathrm{mod}\ \mathfrak{P}^{\mathrm{cc}}}\leq{}
                      \mathrm{length}_{\mathfrak{p}_{f}}\lri{X_{\Q_{\infty}}^{S,\mathrm{cc}}(\mathbf{f}/K)}.
\]
Write for simplicity $\mathcal{L}_{\Q_{\infty}}^{S,\mathrm{cc}}(\mathbf{f}/K):=\mathcal{L}_{K}^{S}(\mathbf{f})\ \mathrm{mod}\ \mathfrak{P}^{\mathrm{cc}}$.
To conclude the proof it remains to verify that
\begin{equation}\label{eq:todocor}
               \mathrm{ord}_{\mathfrak{p}_{f}}\mathcal{L}_{\Q_{\infty}}^{S,\mathrm{cc}}(\mathbf{f}/K)=
               \mathrm{ord}_{k=2}L_{p}^{S}(f_{\infty}/K,k,k/2,1).
\end{equation} 
Note that, by the definition of the Mellin transform  $\widetilde{\mathtt{M}}$ (and the normalisation 
$\chi_{\mathrm{cy}}(\gamma_{+})=\gamma_{\mathrm{wt}}$) we have
\begin{equation}\label{eq:lolo}
                          \widetilde{\mathtt{M}}(\varpi_{\mathrm{cc}})(k,s,r)=\gamma_{\mathrm{wt}}^{k-2}-\gamma_{\mathrm{wt}}^{2(s-1)}
                          =\gamma_{\mathrm{wt}}^{2(s-1)}\lri{\gamma_{\mathrm{wt}}^{2(k/2-s)}-1}
                          \equiv{}0\ \mathrm{mod}\ (s-k/2)\cdot{}\mathscr{A}(U\times{}\Z_{p}\times{}\Z_{p}),
\end{equation}
and then $\widetilde{\mathtt{M}}(\varpi_{\mathrm{cc}})(k,k/2,1)=0$. Similarly, writing 
$\ell_{\mathrm{wt}}:=\log_{p}(\gamma_{\mathrm{wt}})$  and $\ell_{-}:=\log_{p}(\chi_{\mathrm{acy}}(\gamma_{-}))$, we have
\begin{equation}\label{eq:p-p-}
           \mathtt{M}(\varpi_{\mathrm{wt}})(k)\equiv{}\ell_{\mathrm{wt}}\cdot{}(k-2)\ \mathrm{mod}
           \ (k-2)^{2};\ \ 
           \widetilde{\mathtt{M}}(\varpi_{-})(k,s,r)\equiv{}\ell_{-}\cdot{}(r-1)\ \mathrm{mod}\ (r-1)^{2}.
\end{equation}
Assume now that $\mathcal{L}_{\Q_{\infty}}^{S,\mathrm{cc}}(\mathbf{f}/K)\in{}
\mathfrak{p}_{f}^{m}\mathbb{I}_{\mathfrak{p}_{f}}-\mathfrak{p}_{f}^{m+1}\mathbb{I}_{\mathfrak{p}_{f}}$, for some integer $m\geq{}0$, so that 
$\mathrm{ord}_{\mathfrak{p}_{f}}\mathcal{L}_{\Q_{\infty}}^{S,\mathrm{cc}}(\mathbf{f}/K)=m$.
Since $\mathfrak{p}_{f}\mathbb{I}_{\mathfrak{p}_{f}}$ is a principal ideal generated by $\varpi_{\mathrm{wt}}$ $(\ref{eq:unif})$,
equation $(\ref{eq:p-p-})$ gives
\[
                    \mathrm{ord}_{k=2}\mathtt{M}\lri{\mathcal{L}_{\Q_{\infty}}^{S,\mathrm{cc}}(\mathbf{f}/K)}(k)=
                    \mathrm{ord}_{\mathfrak{p}_{f}}\mathcal{L}_{\Q_{\infty}}^{S,\mathrm{cc}}(\mathbf{f}/K).
\]
On the other hand, we have by construction 
$\mathcal{L}_{K}^{S}(\mathbf{f})\equiv{}\mathcal{L}_{\Q_{\infty}}^{S,\mathrm{cc}}(\mathbf{f})\ \mathrm{mod}\ \mathfrak{P}^{\mathrm{cc}}$,
so that equations $(\ref{eq:lolo})$ and $(\ref{eq:p-p-})$ give
\[
         L_{p}^{S}(f_{\infty}/K,k,k/2,1):=\widetilde{\mathtt{M}}\lri{\mathcal{L}_{K}^{S}(\mathbf{f})}(k,k/2,1)=
         \mathtt{M}\lri{\mathcal{L}_{\Q_{\infty}}^{S,\mathrm{cc}}(\mathbf{f}/K)}(k).
\]
Combining the preceding two equations, we deduce that $(\ref{eq:todocor})$ holds in this case.
Assume finally that  $\mathcal{L}_{K}^{S}(\mathbf{f})\in{}\mathfrak{P}^{\mathrm{cc}}$,
i.e. $\mathcal{L}_{\Q_{\infty}}^{S,\mathrm{cc}}(\mathbf{f}/K)=0$.
(This is the case `$m=\infty$'.) Then $L_{p}^{S}(f_{\infty}/K,k,k/2,1)\equiv{}0$ by $(\ref{eq:lolo})$
and $(\ref{eq:p-p-})$,
 so that $(\ref{eq:todocor})$ holds also in this case (giving $\infty=\infty$).
\end{proof}

\section[Bertolini Darmon]{Bertolini-Darmon's exceptional zero formula}\label{bdsec}
Throughout this section, the notations and assumptions are as in Section $\ref{rccl}$.
In particular, we assume that Hypotheses $\ref{h2}$--$\ref{h4}$ 
are satisfied.

Let $\kappa\in{}U^{\mathrm{cl}}$ be a classical point in $U$, let $\phi_{k}\in{}\xari(\mathbb{I})$ be the associated arithmetic point
(of weight $\kappa$ and trivial character), and let $f_{\kappa}\in{}S_{\kappa}(\Gamma_{0}(Np))$ be the corresponding $p$-stabilised
newform (cf. Section $\ref{lochidafam}$). 
Write  $\phi_{\kappa}^{\dag}=\phi_{\kappa}\times{}\chi_{\mathrm{cy}}^{\kappa/2-1}\times{}1 : \I\llbracket{}G_{\infty}\times{}D_{\infty}\rrbracket\fre{}
\overline{\Q}_{p}$
for the morphism of $\mathcal{O}_{L}$-algebras 
such that $\phi_{\kappa}^{\dag}(\sigma\times{}h)=\chi_{\mathrm{cy}}(\sigma)^{\kappa/2-1}$
for every $\sigma\times{}h\in{}G_{\infty}\times{}D_{\infty}$,
and such that $\phi_{\kappa}^{\dag}(x)=\phi_{\kappa}(x)$ for every $x\in{}\I$.
Since $\kappa\equiv{}2\ \mathrm{mod}\ 2(p-1)$, $p\not=2$, and $p$ splits in $K$ (i.e. $\epsilon_{K}(p)=1$), equations $(\ref{eq:intcp})$ and $(\ref{eq:spesu})$ yield
\[
    \phi^{\dag}_{\kappa}\lri{\mathcal{L}_{K}^{S}(\mathbf{f})}=\lambda_{\kappa}
    D_{K}^{\frac{\kappa-2}{2}}
          \lri{1-\frac{p^{\frac{\kappa}{2}-1}}{a_{p}(\kappa)}}^{2}
          \frac{(\kappa/2-1)!\cdot{}L^{S\backslash\{p\}}(f_{\kappa},\kappa/2)}
          {(-2\pi{}i)^{\kappa/2-1}\Omega^{+}_{\phi_{\kappa}}}\cdot{}
          \frac{G(\epsilon_{K})(\kappa/2-1)!\cdot{}L^{S\backslash\{p\}}(f_{\kappa},\epsilon_{K},\kappa/2)}
          {(-2\pi{}i)^{\kappa/2-1}\Omega^{-}_{\phi_{\kappa}}}
\]
By the very definition of the central critical $p$-adic $L$-function $L_{p}^{\mathrm{cc}}(f_{\infty}/K,k)$ we then deduce: for every 
$\kappa\in{}U^{\mathrm{cl}}$
\[
           L_{p}^{\mathrm{cc}}(f_{\infty}/K,\kappa)=\lambda_{\kappa}D_{K}^{\frac{\kappa-2}{2}}\lri{1-\frac{p^{\frac{\kappa}{2}-1}}{a_{p}(\kappa)}}^{2}\cdot{}
           \frac{(\kappa/2-1)!L(f_{\kappa},\kappa/2)}{(-2\pi{}i)^{\kappa/2-1}\Omega_{\phi_{\kappa}}^{+}}\cdot{}
           \frac{G(\epsilon_{K})(\kappa/2-1)!L(f_{\kappa},\epsilon_{K},\kappa/2)}{(-2\pi{}i)^{\kappa/2-1}\Omega_{\phi_{\kappa}}^{-}}.
\]
Since $U^{\mathrm{cl}}$ is a dense subset of $U$,  if we compare this formula with
\cite[Theorem 1.12]{B-D}, we obtain a factorisation
\begin{equation}\label{eq:compl}
          L_{p}^{\mathrm{cc}}(f_{\infty}/K,k)=D_{K}^{\frac{k-2}{2}}
          L_{p}(f_{\infty},k,k/2){ }L_{p}(f_{\infty},\epsilon_{K},k,k/2).
\end{equation}
Here, for every quadratic Dirichlet character $\chi$ of conductor coprime with $Np$, 
$L_{p}(f_{\infty},\chi,k,s)\in{}\mathscr{A}(U\times{}\Z_{p})$
is a Mazur-Kitagawa two-variable $p$-adic  $L$-function attached to $f_{\infty}$ and $\chi$ in \cite[Section 1]{B-D},
and we write simply $L_{p}(f_{\infty},k,s):=L_{p}(f_{\infty},\chi_{\mathrm{triv}},k,s)$ when $\chi=\chi_{\mathrm{triv}}$ is the trivial character. 
Like $L_{p}^{\mathrm{cc}}(f_{\infty}/K,s)$  (once the periods $\Omega_{\phi_{\kappa}}^{\pm}$ are fixed for $\kappa\in{}U^{\mathrm{cl}}$),
$L_{p}(f_{\infty},\chi,k,s)$ is characterised by its interpolation property (namely \cite[Theorem 1.12]{B-D}) up to multiplication 
by a nowhere-vanishing analytic function on $U$,
so the preceding equality has to be interpreted up to multiplication by such a unit in $\mathscr{A}(U)$.

The following  \emph{exceptional-zero} formula is the main result (Theorem 5.4) of \cite{B-D}, where it is proved 
under a  technical assumption (namely the existence of a prime $q\Vert{}N$) subsequently removed by Mok in \cite{Mok}. 
Write $\mathrm{sign}(A/\Q)
%=-w_{N_{A}}
\in{}\{\pm1\}$ for  the sign in the functional equation
satisfied by the Hecke $L$-series $L(A/\Q,s)=L(f,s)$.
%, i.e. minus the eigenvalue $w_{N_{A}}$ of the 
%of the Atkin-Lehner involution $W_{N_{A}}$ acting on $f$. 

\begin{theo}[Bertolini-Darmon \cite{B-D}]  Let $\chi$ be a   quadratic Dirichlet character 
of conductor coprime with $N_{A}=Np$, such that 
\[
                    \chi(-N)=-\mathrm{sign}(A/\Q);\ \ \chi(p)=a_{p}(A)=+1.   
\]
If $\chi$ is non-trivial (resp., $\chi=1$), let $K_{\chi}/\Q$ be the quadratic extension attached to $\chi$
(resp., let $K_{\chi}:=\Q$). Then

$1.$ $L_{p}(f_{\infty},\chi,k,k/2)$ vanishes to order at least $2$ at $k=2$.

$2.$ There exists a global  point $\mathbf{P}_{\chi}\in{}A(K_{\chi})^{\chi}$ \footnote{By $A(K_{\chi})^{\chi}$
we mean the subgroup of $A(K_{\chi})$ on which $\mathrm{Gal}(K_{\chi}/\Q)$ acts via $\chi$.} such that 
\[
                       \frac{d^{2}}{dk^{2}}L_{p}(f_{\infty},\chi,k,k/2)_{k=2}\stackrel{\cdot}{=}\log_{A}^{2}(\mathbf{P}_{\chi}),
\]
where $\log_{A} : A(\overline{\Q}_{p})\fre{}\overline{\Q}_{p}$ is the formal group logarithm \footnote{Writing 
$\Phi_{\mathrm{Tate}} : \overline{\Q}_{p}^{\times}/q_{A}^{\Z}\cong{}A(\overline{\Q}_{p})$
for the Tate $p$-adic uniformization of $A/\Q_{p}$
(see Section $\ref{exalg}$ below), one can define 
$\log_{A}:=\log_{q_{A}}\circ{}\Phi_{\mathrm{Tate}}^{-1} : A(\overline{\Q}_{p})\fre{}\overline{\Q}_{p}$,
where $\log_{q_{A}}$ is the branch of the $p$-adic logarithm vanishing at the Tate period 
$q_{A}\in{}p\Z_{p}$ of $A/\Q_{p}$.}, and 
$\stackrel{\cdot{}}{=}$
denotes equality up to multiplication by a non-zero
(explicit)  factor in $\Q_{p}^{\times}$. 

$3.$ $\mathbf{P}_{\chi}$ has infinite order if and only if  the Hecke $L$-series $L(f,\chi,s)$ has a simple zero at $s=1$.
\end{theo}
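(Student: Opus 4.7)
\medskip

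\textbf{Plan.} The proof divides naturally according to the three parts of the statement. The main tools will be: (a) the functional equation of the Mazur--Kitagawa two-variable $p$-adic $L$-function restricted to the central critical line $s=k/2$, combined with the exceptional-zero phenomenon at $(k,s)=(2,1)$, for part $1$; (b) a Jacquet--Langlands transfer to a definite quaternion algebra and the $\Lambda$-adic Gross--Heegner construction of Bertolini--Darmon, for part $2$; (c) a $p$-adic Gross--Zagier type argument together with the classical Gross--Zagier--Zhang formula, for part $3$.

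For part $1$, my first step would be to write down the functional equation $L_p(f_{\infty},\chi,k,s)\sim \varepsilon(k,s)\cdot L_p(f_\infty,\chi,k,k-s)$, in which the sign at the central critical point $(2,1)$ is governed by $\chi(-N)\cdot \mathrm{sign}(A/\Q)$. Under the hypothesis $\chi(-N)=-\mathrm{sign}(A/\Q)$ this sign is $-1$, forcing $L_p(f_\infty,\chi,k,k/2)$ to vanish identically in $k$ along the antidiagonal symmetry, and in particular at $k=2$; this gives order of vanishing $\geq 1$. The extra zero is the exceptional zero of Mazur--Tate--Teitelbaum type: the interpolation factor at $(2,1)$ contains the Euler-at-$p$ contribution $(1-\chi(p)a_p(A)^{-1})$, which vanishes precisely because $a_p(A)=\chi(p)=+1$. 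A careful leading term analysis along $s=k/2$ combining these two sources of vanishing upgrades the bound to $\mathrm{ord}_{k=2}L_p(f_\infty,\chi,k,k/2)\geq 2$.

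For parts $2$ and $3$, the plan is to construct $\mathbf{P}_\chi$ as (essentially) a Heegner point on $A$ arising from a Shimura-curve parameterisation, and to compute the second weight-derivative via its $p$-adic uniformisation. Concretely, choose an auxiliary imaginary quadratic field $K_\chi$ (or $\Q$ if $\chi$ is trivial) and an admissible factorisation $N=N^+N^-$ with $N^-$ squarefree and a product of an odd number of primes inert in $K_\chi$; Jacquet--Langlands then transfers $f$ to a modular form on a definite quaternion algebra $B/\Q$ ramified at $N^-$ and $\infty$, and more importantly lifts the Hida family $\mathbf{f}$ to a $\Lambda$-adic family $\mathbf{f}^B$ on the adelic double coset of $B^\times$. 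One then expresses $L_p(f_\infty,\chi,k,k/2)$ via a Waldspurger-style formula as the square of a $\Lambda$-adic theta element associated with Gross points on $B$ twisted by $\chi$. Taking $d^2/dk^2$ at $k=2$ and interpreting the result through the \v{C}erednik--Drinfeld $p$-adic uniformisation of the \emph{indefinite} Shimura curve of level $N^+p\cdot N^-$ produces a local point in $A(\overline{\Q}_p)$ which, by a rigidity argument (essentially that it comes from the Galois-equivariant Heegner system), is actually the image of a global point $\mathbf{P}_\chi\in A(K_\chi)^\chi$; the resulting identity is $(d^2/dk^2)L_p(f_\infty,\chi,k,k/2)_{k=2}\stackrel{\cdot}{=}\log_A^2(\mathbf{P}_\chi)$. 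Part $3$ then follows by invoking the classical Gross--Zagier--Zhang formula on this Shimura curve: the Néron--Tate height of $\mathbf{P}_\chi$ is a nonzero multiple of $L'(f,\chi,1)$, so $\mathbf{P}_\chi$ has infinite order iff $L(f,\chi,s)$ has a simple zero at $s=1$ (recall $L(f,\chi,1)=0$ by the sign assumption).

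The principal obstacle is the second weight derivative in part $2$: one must (i) show that the $\Lambda$-adic theta element on $B^\times$ really does interpolate $L_p(f_\infty,\chi,k,k/2)$ (a delicate comparison of periods and Euler factors), (ii) identify the second-order term at $k=2$ with a $p$-adic logarithm on the Tate curve of $A/\Q_p$, and (iii) promote the resulting rigid-analytic datum to an actual global point via \v{C}erednik--Drinfeld and the Heegner system. The removal of the auxiliary assumption on the existence of a prime $q\Vert N$ in the original \cite{B-D} (carried out in \cite{Mok}) indicates how technically subtle step (i) is.
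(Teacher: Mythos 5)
The paper does not prove this theorem itself; it is quoted as the main result (Theorem 5.4) of Bertolini--Darmon \cite{B-D}, with the auxiliary technical hypothesis $q\Vert N$ subsequently removed by Mok \cite{Mok}. There is therefore no in-paper proof to compare against, but since you are reconstructing the argument of \cite{B-D}, I will assess your sketch on its own terms. For parts $2$ and $3$ your plan --- Jacquet--Langlands transfer to a definite quaternion algebra ramified at $N^{-}\infty$, $\Lambda$-adic theta/Gross elements, \v{C}erednik--Drinfeld $p$-adic uniformisation of the relevant indefinite Shimura curve, and a Gross--Zagier input at the end --- is indeed the route actually taken in \cite{B-D}.

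Your argument for part $1$, however, contains a genuine error. You infer from $\chi(-N)=-\mathrm{sign}(A/\Q)$ that the functional-equation sign along $s=k/2$ is $-1$, ``forcing $L_{p}(f_{\infty},\chi,k,k/2)$ to vanish identically in $k$.'' If that were so, the second weight-derivative in part $2$ would be identically zero, $\mathbf{P}_{\chi}$ would be forced to be torsion, and the whole theorem would be vacuous. The point you are missing is that the sign of the functional equation of $L(f_{\kappa},\chi,s)$ at the centre is \emph{not} constant over the Hida family: at $\kappa=2$ the eigenform $f$ is new of level $Np$, and the Atkin--Lehner involution at $p$ contributes a factor $W_{p}(f)=-a_{p}(f)=-1$ to the sign, whereas for $\kappa>2$ in $U^{\mathrm{cl}}$ the underlying newform $f_{\kappa}^{\#}$ has level prime to $p$ and this contribution disappears. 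Hence the sign is $-1$ at $\kappa=2$ (giving $L(f,\chi,1)=0$) but $+1$ for $\kappa>2$, so $L_{p}(f_{\infty},\chi,k,k/2)$ is a genuinely nonzero element of $\mathscr{A}(U)$ with a zero at $k=2$. The bound $\mathrm{ord}_{k=2}\geq 2$ must instead be extracted by combining this sign vanishing with the degenerate Euler factor at $p$ in the interpolation formula --- the exceptional-zero mechanism of Mazur--Tate--Teitelbaum and Greenberg--Stevens --- and this requires a careful leading-term analysis, not a uniform antidiagonal symmetry.

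A secondary but substantive point: in parts $2$--$3$ you speak of ``choosing an auxiliary imaginary quadratic field $K_{\chi}$,'' but $K_{\chi}$ is already prescribed by $\chi$ in the statement and need not be imaginary. In \cite{B-D} the quaternionic construction is carried out over a genuinely auxiliary imaginary quadratic field $K$, chosen to satisfy a Heegner hypothesis relative to a factorisation $N=N^{+}N^{-}$, and the quadratic character $\chi$ is then realised through genus characters of $K$. Your sketch conflates these two fields, and disentangling them is needed before the $\check{}$Cerednik--Drinfeld and rigidity steps you outline can be made precise.
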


In the preceding result, $\chi$ is allowed to be a generic Dirichelt character 
of conductor coprime with $Np$. Applying the theorem to both $\chi=\chi_{\mathrm{triv}}$
and $\chi=\epsilon_{K}$, we obtain the following corollary. 

\begin{cor}\label{corbdmain} Assume that
$\mathrm{sign}(A/\Q)=-1$, and that 
Hypotheses  $\ref{h2}$, $\ref{h3}$ and $\ref{h4}$ are satisfied.
Denote by 
$L(A/K,s):=L(f,s)\cdot{}L(f,\epsilon_{K},s)$ the complex Hasse-Weil $L$-function of $A/K$.
Then 
%$(i)$ $\mathrm{sign}(A/\Q)=-1$;
%$(ii)$ $\epsilon_{K}(p)=+1$ (i.e. $p$ splits in $K$);
%$(iii)$ $\epsilon_{K}(-N)=+1$.\\
%Then  
$L_{p}^{\mathrm{cc}}(f_{\infty}/K,k)$ vanishes to order at least $4$ at $k=2$, and
\[
    \mathrm{ord}_{k=2}L_{p}^{\mathrm{cc}}(f_{\infty}/K,k)=4\ \ \iff
\ \ \mathrm{ord}_{s=1}L(A/K,s)=2.
\]
\end{cor}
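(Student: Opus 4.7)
The plan is to reduce to the Bertolini-Darmon exceptional zero formula applied twice, once to $\chi=\chi_{\mathrm{triv}}$ and once to $\chi=\epsilon_{K}$, via the factorisation $(\ref{eq:compl})$. First I would verify that both characters satisfy the hypotheses of Bertolini-Darmon's theorem. The condition $\chi(p)=+1$ is automatic for $\chi_{\mathrm{triv}}$ and holds for $\epsilon_{K}$ since $p$ splits in $K$ by Hypothesis $\ref{h3}$. The sign condition $\chi(-N)=-\mathrm{sign}(A/\Q)=+1$ is trivial for $\chi_{\mathrm{triv}}$, and for $\chi=\epsilon_{K}$ one computes
\[
\epsilon_{K}(-N)=\epsilon_{K}(-1)\,\epsilon_{K}(N^{+})\,\epsilon_{K}(N^{-})=(-1)(+1)(-1)=+1,
\]
using that $\epsilon_{K}$ is odd (since $K$ is imaginary), that every prime dividing $N^{+}$ splits in $K$, and that Hypothesis $\ref{h4}$ forces $N^{-}$ to be a squarefree product of an \emph{odd} number of primes inert in $K$. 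Note also that the same computation yields $\mathrm{sign}(A^{K}/\Q)=-1$.

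Second, Bertolini-Darmon part $(1)$ gives $\mathrm{ord}_{k=2}L_{p}(f_{\infty},k,k/2)\geq 2$ and $\mathrm{ord}_{k=2}L_{p}(f_{\infty},\epsilon_{K},k,k/2)\geq 2$. Since $D_{K}^{(k-2)/2}:=\exp_{p}\bigl(\tfrac{k-2}{2}\log_{p}D_{K}\bigr)$ is a unit in $\mathscr{A}(U)$ equal to $1$ at $k=2$ (note that $p\nmid D_{K}$ by Hypothesis $\ref{h3}$, so $\log_{p}D_{K}$ is defined), the factorisation $(\ref{eq:compl})$ immediately yields
\[
\mathrm{ord}_{k=2}L_{p}^{\mathrm{cc}}(f_{\infty}/K,k)=\mathrm{ord}_{k=2}L_{p}(f_{\infty},k,k/2)+\mathrm{ord}_{k=2}L_{p}(f_{\infty},\epsilon_{K},k,k/2)\geq 4,
\]
which is the first assertion. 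Equality to $4$ holds iff each of the two Mazur-Kitagawa factors vanishes to order exactly $2$, which by Bertolini-Darmon part $(2)$ is equivalent to $\log_{A}(\mathbf{P}_{\chi_{\mathrm{triv}}})\neq 0$ and $\log_{A}(\mathbf{P}_{\epsilon_{K}})\neq 0$.

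Finally, via the Tate uniformisation $\Phi_{\mathrm{Tate}}:\overline{\Q}_{p}^{\times}/q_{A}^{\Z}\cong A(\overline{\Q}_{p})$ and the fact that $\log_{q_{A}}$ vanishes precisely on $q_{A}^{\Z}\cdot\mu_{\infty}$, one sees that $\log_{A}$ vanishes on $A(\overline{\Q}_{p})$ exactly at torsion points; combined with Mordell-Weil this gives $\log_{A}(\mathbf{P}_{\chi})\neq 0$ iff $\mathbf{P}_{\chi}\in A(K_{\chi})^{\chi}$ has infinite order. Applying Bertolini-Darmon part $(3)$, this is equivalent to $L(f,s)$ and $L(f,\epsilon_{K},s)$ each having a simple zero at $s=1$. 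Since both signs in the functional equations are $-1$, both $L$-series vanish to odd order at $s=1$, so the condition that each vanishes to order exactly $1$ is equivalent to $\mathrm{ord}_{s=1}L(A/K,s)=\mathrm{ord}_{s=1}L(f,s)+\mathrm{ord}_{s=1}L(f,\epsilon_{K},s)=2$, as claimed. There is no genuinely hard step here: the most delicate point is the injectivity of $\log_{A}$ on $A(K_{\chi})^{\chi}/\mathrm{torsion}$ in the split multiplicative setting, which the Tate-uniformisation argument handles cleanly.
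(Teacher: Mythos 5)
Your proof is correct and follows the same route as the paper's: verify that both $\chi_{\mathrm{triv}}$ and $\epsilon_{K}$ satisfy the hypotheses of the Bertolini-Darmon theorem, then combine the three parts of that theorem with the factorisation $(\ref{eq:compl})$; you simply flesh out the glue between parts $(2)$ and $(3)$ (via injectivity of $\log_{A}$ modulo torsion) and the parity bookkeeping that the paper leaves implicit. One small slip: the kernel of $\log_{q_{A}}$ on $\overline{\Q}_{p}^{\times}$ is $q_{A}^{\Q}\cdot\mu_{\infty}$, not $q_{A}^{\Z}\cdot\mu_{\infty}$, but since $q_{A}^{\Q}$ maps into the torsion of $\overline{\Q}_{p}^{\times}/q_{A}^{\Z}$ your conclusion that $\ker\log_{A}$ is exactly $A(\overline{\Q}_{p})_{\mathrm{tors}}$ is still correct.
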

\begin{proof} %$(i)$, $(ii)$ and $(iii)$
%guarantee that the hypotheses  of the preceding  theorem are satisfied by both $\chi=1$ and $\chi=\epsilon_{K}$.
%The corollary then follows by the  theorem and the factorisation $(\ref{eq:compl})$.
Since $\mathrm{sign}(A/\Q)=-1$, the hypotheses of the preceding theorem are satisfied by $\chi=\chi_{\mathrm{triv}}$.
Moreover, since $p$ splits in $K$ by Hypothesis $\ref{h3}$,
$\epsilon_{K}(p)=+1$, and $\epsilon_{K}(-N)=-\epsilon(N^{-})=+1$ by Hypothesis $\ref{h4}$.
Then $\chi=\epsilon_{K}$ also satisfies the hypotheses of the theorem. 
The corollary then follows by applying the theorem to both $\chi=\chi_{\mathrm{triv}}$
and $\chi=\epsilon_{K}$, and using the factorisation $(\ref{eq:compl})$.
\end{proof}

\section[Bounding]{Bounding  the characteristic ideal via \neko's duality}\label{mainsec} 

Recall the  arithmetic prime $\phi_{f}\in{}\xari(\mathbb{I})$ 
defined   in Section $\ref{lochidafam}$, and write as above $\mathfrak{p}_{f}:=\ker(\phi_{f})$, which is a height-one prime ideal of $\mathbb{I}$.
Let $\chi$ be a  quadratic Dirichlet character of conductor coprime with $Np$.
If $\chi$ is non-trivial (resp., $\chi=1$), let $K_{\chi}/\Q$ be the corresponding quadratic extension
(resp., let $K_{\chi}:=\Q$), and let $D_{\chi}$ be the discriminant of $K_{\chi}$.
Fix a finite set $S$  of primes of $K_{\chi}$ containing all the prime divisors of $NpD_{\chi}$,
and decomposition groups $G_{K_{\chi,w}}:=\mathrm{Gal}(\overline{\Q}_{\ell}/K_{\chi,w})\hookrightarrow{}G_{K_{\chi}}$
at $w$, for every  $w\in{}S$ dividing the rational prime $\ell$ (where $K_{\chi,w}$ 
denotes the completion of $K_{\chi}$ at $w$).
Define  the \emph{strict Greenberg Selmer group} of $\mathbb{T}_{\mathbf{f}}/K_{\chi}$
(cf. Section $\ref{ppq}$):
\[
                 \mathrm{Sel}_{\mathrm{Gr}}^{\mathrm{cc}}(\mathbf{f}/K_{\chi}):=
                 \ker\lri{H^{1}(G_{K_{\chi},S},\mathbb{T}_{\mathbf{f}}\otimes_{\I}\mathbb{I}^{\ast})
                 \longrightarrow{}\prod_{v|p}H^{1}(K_{\chi,v},\mathbb{T}_{\mathbf{f},v}^{-}\otimes_{\I}\mathbb{I}^{\ast})},
\]
where $G_{K_{\chi},S}$ denotes as usual 
the Galois group of the maximal algebraic extension of $K_{\chi}$ which is unramified 
outside $S\cup{}\{\infty\}$.
Let
%We write $X_{\mathrm{Gr}}^{\mathrm{cc}}(\mathbf{f}/K_{\chi})$
%for its Pontrjagin dual, i.e.:
\[
                 X_{\mathrm{Gr}}^{\mathrm{cc}}(\mathbf{f}/K_{\chi}):=\Hom{\Z_{p}}\Big(
                 \mathrm{Sel}_{\mathrm{Gr}}^{\mathrm{cc}}(\mathbf{f}/K_{\chi}),\divp\Big)\   \footnote{ The Selmer groups already defined depend in general on the choice of the set $S$.
On the other hand, we are interested here only in the structure of the localisation of $X_{\mathrm{Gr}}^{\mathrm{cc}}(\mathbf{f}/K_{\chi})$ at $\mathfrak{p}_{f}$,
and such a localisation does not depend, up to canonical isomorphism, on the choice of $S$.}.
\]
 For every $\Z[\mathrm{Gal}(K_{\chi}/\Q)]$-module $M$,
write $M^{\chi}$ for the submodule of $M$ on which $\mathrm{Gal}(K_{\chi}/\Q)$ acts via $\chi$
(so that $M^{\chi}:=M$ is $\chi$ is trivial, and $M^{\chi}$ is the submodule of $M$ on which the nontrivial automorphism 
of $\mathrm{Gal}(K_{\chi}/\Q)$ acts as $-1$ if $\chi$ is nontrivial). 
The aim of this section is to prove the following theorem.

\begin{theo}\label{bchar} Let $\chi$ be a quadratic Dirichlet 
character of conductor coprime with $Np$.
Assume that:

$(i)$ $\chi(p)=1$, i.e. $p$ splits in $K_{\chi}$;

$(ii)$ $\rank_{\Z}A(K_{\chi})^{\chi}=1$;

$(iii)$ the $p$-primary subgroup $\sha(A/K_{\chi})^{\chi}_{p^{\infty}}$ 
of $\sha(A/K_{\chi})^{\chi}$ is finite. \\
Then  the localisation at $\mathfrak{p}_{f}$ of $X_{\mathrm{Gr}}^{\mathrm{cc}}(\mathbf{f}/K_{\chi})^{\chi}$ is isomorphic
to the residue field of the discrete valuation ring $\Il$:
\[
      X_{\mathrm{Gr}}^{\mathrm{cc}}(\mathbf{f}/K_{\chi})^{\chi}\otimes_{\I}\Il\cong{}\Il/\mathfrak{p}_{f}\Il.
\]
\end{theo}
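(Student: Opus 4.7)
The plan is to translate the statement about the $\Il$-module structure of $X_{\mathrm{Gr}}^{\mathrm{cc}}(\mathbf{f}/K_{\chi})^{\chi}_{\mathfrak{p}_{f}}$ into a non-degeneracy statement for \neko's half-twisted weight pairing on an extended Selmer group of $V_{f}$ over $K_{\chi}$, and then to read off this non-degeneracy from the explicit reciprocity formula $(\ref{eq:explexintro})$ established in \cite{PhD}. Concretely, I would apply \neko's formalism of Selmer complexes to the nearly-ordinary $\I[G_{K_{\chi}}]$-module $\ppq$ with its local conditions coming from the filtration $\mathbb{T}_{\mathbf{f},v}^{+}\subset{}\mathbb{T}_{\mathbf{f}}$ at primes $v\mid p$, and use the self-duality pairing $\pi$ from Section $\ref{ppq}$ to get \neko's generalised Cassels-Tate pairing on the dual extended Selmer group. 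Passing to the $\chi$-isotypic component and then localising at the height-one prime $\mathfrak{p}_{f}$, one obtains a canonical identification of $X_{\mathrm{Gr}}^{\mathrm{cc}}(\mathbf{f}/K_{\chi})^{\chi}_{\mathfrak{p}_{f}}$ as an $\Il$-module with a module determined by \neko's derivative (Bockstein-type) pairing on the specialisation at $\phi_{f}$, which is precisely $\dia{-,-}_{V_{f},\pi}^{\mathrm{Nek},\chi}$.

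Next I would identify the relevant extended Selmer group explicitly. At the weight $k=2$ point $\phi_{f}$, the specialisation of $\ppq$ is the self-dual Galois representation $V_{f}$ attached to $A/\Q$. Because $A/\Q_{p}$ has split multiplicative reduction and $p$ splits in $K_{\chi}$ (hypothesis $(i)$), the Greenberg local condition at each of the two primes of $K_{\chi}$ above $p$ degenerates (the Frobenius eigenvalue on $\mathbb{T}_{\mathbf{f},v}^{-}$ specialises to $1$), producing an exceptional zero of order two. Under hypotheses $(ii)$ and $(iii)$, the resulting \neko-extended Selmer group $\exsel^{1}(K_{\chi},V_{f})^{\chi}$ is exactly two-dimensional over $\Q_{p}$, with basis the Heegner-type point $P_{\chi}$ spanning $A(K_{\chi})^{\chi}\otimes\Q_{p}$ and the Tate period class $q_{\chi}$ coming from the exceptional zero:
\[
      \exsel^{1}(K_{\chi},V_{f})^{\chi}=A^{\dag}(K_{\chi})^{\chi}\otimes\Q_{p}=\Q_{p}\cdot{}q_{\chi}\oplus{}\Q_{p}\cdot{}P_{\chi}.
\]
Here the finiteness of $\sha(A/K_{\chi})^{\chi}_{p^{\infty}}$ is precisely what ensures that the inclusion $A^{\dag}(K_{\chi})^{\chi}\otimes\Q_{p}\hookrightarrow{}\exsel^{1}(K_{\chi},V_{f})^{\chi}$ is an equality.

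On this two-dimensional space the pairing $\dia{-,-}_{V_{f},\pi}^{\mathrm{Nek},\chi}$ is skew-symmetric, so its non-degeneracy is equivalent to $\dia{q_{\chi},P_{\chi}}_{V_{f},\pi}^{\mathrm{Nek},\chi}\neq 0$. But the explicit formula $(\ref{eq:explexintro})$ gives
\[
           \dia{q_{\chi},P_{\chi}}^{\mathrm{Nek},\chi}_{V_{f},\pi}\stackrel{\cdot{}}{=}\log_{A}(P_{\chi}),
\]
which is non-zero because $P_{\chi}$ has infinite order (hypothesis $(ii)$) and $\log_{A}$ vanishes only on torsion. Plugging this non-degeneracy back into \neko's structure theorem for $X_{\mathrm{Gr}}^{\mathrm{cc}}(\mathbf{f}/K_{\chi})^{\chi}_{\mathfrak{p}_{f}}$ in terms of the Bockstein pairing then forces $\mathrm{length}_{\Il}\lri{X_{\mathrm{Gr}}^{\mathrm{cc}}(\mathbf{f}/K_{\chi})^{\chi}_{\mathfrak{p}_{f}}}=1$, equivalently $X_{\mathrm{Gr}}^{\mathrm{cc}}(\mathbf{f}/K_{\chi})^{\chi}\otimes_{\I}\Il\cong{}\Il/\mathfrak{p}_{f}\Il$.

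The main technical obstacle is the first step: setting up rigorously the isomorphism between $X_{\mathrm{Gr}}^{\mathrm{cc}}(\mathbf{f}/K_{\chi})^{\chi}_{\mathfrak{p}_{f}}$ and the cokernel/kernel data for $\dia{-,-}_{V_{f},\pi}^{\mathrm{Nek},\chi}$ on $\exsel^{1}(K_{\chi},V_{f})^{\chi}$. This requires a careful derived-category control theorem for \neko's Selmer complex along the height-one prime $\mathfrak{p}_{f}$, comparing the strict and non-strict Greenberg local conditions at the primes above $p$ (and absorbing the $\chi$-isotypic projection into the formalism, using that $p$ is odd and $\chi$ has order prime to $p$). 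The other ingredients $-$ the identification of $\exsel^{1}(K_{\chi},V_{f})^{\chi}$ with the extended Mordell--Weil group, and the non-vanishing via $(\ref{eq:explexintro})$ $-$ are essentially already in place from \cite{Ne} and \cite{PhD}.
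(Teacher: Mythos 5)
Your proposal is correct and follows essentially the same route as the paper: reduce the length computation to non-degeneracy of \neko's half-twisted weight pairing on $\exsel^{1}(K_{\chi},V_{f})^{\chi}\cong A^{\dag}(K_{\chi})^{\chi}\otimes L = L\cdot q_{\chi}\oplus L\cdot P_{\chi}^{\dag}$, and deduce that non-degeneracy from the explicit formula $\dia{q_{\chi},P_{\chi}}_{V_{f},\pi}^{\mathrm{Nek},\chi}\stackrel{\cdot}{=}\log_{A}(P_{\chi})\neq 0$ of \cite{PhD}. The ``technical obstacle'' you flag is indeed where the paper does its real work (Proposition \ref{nondeg} via Lemmas \ref{seminondeg}, \ref{lala}, \ref{lalaf} and \ref{sublemma}): the Bockstein/Cassels--Tate formalism directly controls $\exsel^{2}(K_{\chi},\ppql)^{\chi}_{\mathrm{tors}}$, and passing from that to $X_{\mathrm{Gr}}^{\mathrm{cc}}(\mathbf{f}/K_{\chi})^{\chi}_{\mathfrak{p}_{f}}$ requires accounting for the local $H^{2}(K_{\chi,v},\mathbb{T}_{\mathbf{f},v}^{+}\otimes\Il)\cong\Il/\mathfrak{p}_{f}\Il$ terms at $v\mid p$ (Lemma \ref{sublemma}, via the Greenberg--Stevens $\mathscr{L}$-invariant), which exactly offsets the extra $\bigoplus_{v\mid p}L$ appearing in $\exsel^{1}$ over $\mathrm{Sel}_{p}$, so the net length comes out to $\dim_{\Q_{p}}\mathrm{Sel}_{p}(A/K_{\chi})^{\chi}=1$.
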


\subsection{\neko's theory}\label{sc}  In this section we recall the needed results from 
\neko's theory of Selmer complexes \cite{Ne}.
Unless explicitly specified, all notations and conventions are as in \emph{loc. cit.}

\subsubsection{\neko's Selmer complexes}\label{genenek} Given a ring $R$,
write $\mathrm{D}(R):=\mathrm{D}(_{R}\mathrm{Mod})$ for the derived category of complexes of $R$-modules,
and $\mathrm{D}_{\mathrm{ft}}^{b}(R)\subset{}\mathrm{D}(R)$ (resp., $\mathrm{D}_{\mathrm{cf}}^{b}(R)\subset{}\mathrm{D}(R)$) for the 
subcategory of cohomologically bounded complexes, with cohomology 
of finite (resp., cofinite) type over $R$. 
%We will write $\mathfrak{p}$ and $\overline{\mathfrak{p}}$ for the primes of $K$
%dividing $p$: $p\mathcal{O}_{K}=\mathfrak{p}\cdot{}\overline{\mathfrak{p}}$.

Recall the self-dual, ordinary $\I$-adic representation $\ppq=(\ppq,\ppq^{\pm})$, defined in Section $\ref{ppq}$. Denote by
\[
       \mathbb{A}_{\mathbf{f}}:=\Hom{\mathrm{cont}}(\ppq,\mu_{p^{\infty}}); \ \ 
       \mathbb{A}_{\mathbf{f}}^{\pm}:=\Hom{\mathrm{cont}}(\ppq^{\mp},\mu_{p^{\infty}})
\]
the Kummer dual $p$-ordinary representation.
Set $T_{f}:=\mathbb{T}_{\mathbf{f}}/\mathfrak{p}_{f}\mathbb{T}_{\mathbf{f}}$ and 
$T_{f}^{\pm}:=\mathbb{T}_{\mathbf{f}}^{\pm}/\mathfrak{p}_{f}\mathbb{T}_{\mathbf{f}}^{\pm}$.  Then one has 
\[
      A_{f}:=\Hom{\mathrm{cont}}(T_{f},\mu_{p^{\infty}})\cong{}\mathbb{A}_{\mathbf{f}}[\mathfrak{p}_{f}];\ \ 
       A_{f}^{\pm}:=\Hom{\mathrm{cont}}(T_{f}^{\mp},\mu_{p^{\infty}})\cong{}\mathbb{A}^\pm_{\mathbf{f}}[\mathfrak{p}_{f}].
\]
Given a multiplicative subset $\mathscr{S}$ of a ring $R$, and an $R$-module $M$,
write as usual $\mathscr{S}^{-1}M$ for the localisation of $M$ at $\mathscr{S}$.
Fix a multiplicative subset  $\mathscr{S}$ of $\I$ or $\mathcal{O}_{L}$, let 
$$X\in{}\{\mathscr{S}^{-1}\mathbb{T}_{\mathbf{f}}, \mathscr{S}^{-1}T_{f}, \mathbb{A}_{\mathbf{f}}, A_{f}\}$$ and let 
$R_{X}\in{}\{\mathscr{S}^{-1}\I,\mathscr{S}^{-1}\mathcal{O}_{L},\I,\mathcal{O}_{L}\}$ be the corresponding `coefficient ring'. 
%We remind that 
%$S$ is a finite set of primes of $K_{\chi}$, containing all the primes  dividing   $Np\cdot{}D_{K}$.
%Let us fix, for every $v|p$, an embedding $\rho_{v} : \overline{\Q}\hookrightarrow{}\overline{\Q}_{p}$,
%such that $K_{\chi,v}=\Q_{p}\cdot{}\rho_{v}(K_{\chi})$; this induces 
%$\rho_{v}^{\ast} : G_{K_{\chi},v}=\mathrm{Gal}(\overline{\Q}_{p}/K_{\chi,v})\hookrightarrow{}G_{K_{\chi}}
%\twoheadrightarrow{}G_{K_{\chi},S}$, i.e. a decomposition group at $v$. 
%%Recall the embedding $i_{p} : \overline{\Q}\hookrightarrow{}\overline{\Q}_{p}$
%fixed at the beginning of this note (Section $\ref{hifa}$), so that  $\mathbb{T}_{\mathbf{f}}^{+}$
%is a  $G_{\Q_{p}}$-submodule of $\mathbb{T}_{\mathbf{f}}$ with respect to the action of $G_{\Q_{p}}$
%induced by $i_{p}^{\ast} : G_{\Q_{p}}\hookrightarrow{}G_{\Q}$. We have $\rho_{v}=\alpha_{v}\circ{}i_{p}\circ{}\beta_{v}$,
%for some $\alpha_{v}\in{}G_{\Q_{p}}$ and some $\beta_{v}\in{}G_{K_{\chi}}$.
For every prime $v|p$ of $K_{\chi}$,
set  $X_{v}^{+}:=\mathscr{S}^{-1}\mathbb{T}_{\mathbf{f}}^{+}$ (resp., $\mathscr{S}^{-1}T_{f}^{+}$, $\mathbb{A}_{\mathbf{f}}^{+}$,
$A_{f}^{+}$) if $X=\mathscr{S}^{-1}\mathbb{T}_{\mathbf{f}}$ 
(resp., $\mathscr{S}^{-1}T_{f}$, $\mathbb{A}_{\mathbf{f}}$, $A_{f}$),
and  $X_{v}^{-}:=X/X_{v}^{+}$.
The exact sequence $(\ref{eq:exact for general v})$
then induces  short exact sequences of $R_{X}[G_{K_{\chi},v}]$-modules
\[
             0\fre{}X_{v}^{+}\stackrel{i_{v}^{+}\ }{\rightarrow}X\fre{p_{v}^{-}\ }X_{v}^{-}\fre{}0.
\]
%where $i_{v}^{+}:=\beta_{v}^{-1}\circ{}i^{+}\circ{}\alpha_{v}^{-1}$ and $p_{v}^{-}=\alpha_{v}\circ{}p\circ{}\beta_{v}$
%and we denote by $i^{+} : X_{v}^{+}\hookrightarrow{}X$ and $p^{-} : X\twoheadrightarrow{}X_{v}^{-}$
%the natural inclusion and projection respectively. 
(Recall that $\mathbb{T}_{\mathbf{f},w}^{+}:=\mathbb{T}_{\mathbf{f}}^{+}$ for every prime $w|p$ of 
$\overline{\Q}$, cf. equation $(\ref{eq:exact for general v})$.)

As in \cite[Section 6]{Ne}, define local conditions $\Delta_{S}(X)=\left\{\Delta_{v}(X)\right\}_{v\in{}S}$ %\Delta_{S}(X)$ 
for $X/K_{\chi}$ as follows \footnote{Let $R$ be a local complete Noetherian ring with finite residue field of characteristic $p$, and let $T$
be an $R$-module of finite or cofinite type, equipped with a continuous, linear action of $G_{K_{\chi},S}$.
For every $w\in{}S$, fix a decomposition group $G_{w}$ at $w$, 
i.e. $G_{w}:=G_{K_{\chi},w}\hookrightarrow{}G_{K_{\chi}}\twoheadrightarrow{}G_{K_{\chi},S}$.
According to \neko's theory of Selmer complexes, a \emph{local condition} at $w\in{}S$ for $T$
is the choice $\Delta_{w}(T)$ of a complex of $R$-modules $U_{w}^+(T)$,
together with a morphism of complexes
$i_{w}^{+}(T) : U_{w}^+(T)\fre{}\ctsb(K_{\chi,w},T)$. For $G=G_{K_{\chi},S}$ or $G_{w}$ ($w\in{}S$),
$\ctsb(G,T)$ (also denoted $\ctsb(K_{\chi,w},T)$ when $G=G_{w}$) is the complex of continuous (non-homogeneous) $T$-valued cochains on $G$.
If $\mathscr{R}$ is a localisation of $R$, and $\mathscr{T}:=T\otimes_{R}\mathscr{R}$,
set $\ctsb(\ast,\mathscr{T}):=\ctsb(\ast,T)\otimes_{R}\mathscr{R}$.
Then a \emph{local condition} for $\mathscr{T}$ at $w\in{}S$
is a morphism $i_{w}^{+}(T)\otimes\mathscr{R} : U_{w}^{+}(T)\otimes_{R}\mathscr{R}\fre{}\ctsb(K_{\chi,w},\mathscr{T})$, obtained as the base change of a local condition $i_{w}^{+}(T)$ for $T$ at $w$.
%the same definition applies, by defining $\ctsb(\ast,\mathscr{T}):=\ctsb(\ast,T)\otimes_{R}\mathscr{R}$.
}.
For a prime $v\in{}S$ dividing $p$, let $\Delta_{v}(X)$ be the morphism
\[
  i_{v}^+(X) :  U_{v}^+(X):=\ctsb(K_{\chi,v},X_{v}^+)
                  \longrightarrow{}\ctsb(K_{\chi,v},X),
\]
i.e. $\Delta_{v}(X)$ is the Greenberg local condition attached to the $R_{X}[G_{K_{\chi,v}}]$-submodule  
$i_{v}^{+} : X_{v}^+\subset{}X$.  For every $S\ni{}w\nmid{}p$, we define $\Delta_{w}(X)$ to be the \emph{full} local condition:
$i_{w}^+(X) : U_{w}^+(X):=0\fre{}\ctsb(K_{\chi,w},X)$ (resp., the \emph{empty} local condition:
$i_{w}^+(X)=\mathrm{id} : U_{w}^+(X):=\ctsb(K_{\chi,w},X)\fre{}\ctsb(K_{\chi,w},X)$) in case $X\in{}\{\mathscr{S}^{-1}\ppq,\mathscr{S}^{-1}T_{f}\}$
(resp., $X\in{}\{\mathbb{A}_{\mathbf{f}},A_{f}\}$).
The associated \emph{\neko's Selmer complex} \cite{Ne} is defined as the complex of $R_{X}$-modules
\[
\scob(K_{\chi},X)=\scob(G_{K_{\chi},S},X;\Delta_{S}(X)):=
     \mathrm{Cone}\lri{\ctsb(G_{K_{\chi},S},X)\oplus\bigoplus_{v\in{}S}U_{v}^+(X)\stackrel{\mathrm{res}_{S}-i_{S}^+}{\longrightarrow{}}
     \bigoplus_{v\in{}S}\ctsb(K_{\chi,v},X)}[-1],
\]
where $\mathrm{res}_S=\oplus_{v\in{}S}\mathrm{res}_{v}$ and $i_{S}^+=\oplus_{v\in{}S}i_{v}^+(X)$. 
It follows by standard results on continuous Galois cohomology groups \cite[Section 4]{Ne}
(essentially due to Tate \cite{Tate}) that $\scob(K_{\chi},X)$ is cohomologically bounded, with cohomology of finite (resp., cofinite)
type over $R_{X}$ if $X$ is of finite (resp., cofinite) type over $R_{X}$. 
Let
\[     
     \derco(K_{\chi},X)\in{}\mathrm{D}^{b}_{\mathrm{ft}, (\text{resp., cf})}(R_{X});\ \ 
     \exsel^{\ast}(K_{\chi},X):=H^{\ast}\Big(\derco(K_{\chi},X)\Big)
     \in{}\lri{_{R_{X}}\mathrm{Mod}}_{\mathrm{ft}, (\text{resp., cf})}
\]     
be the image of $\scob(K_{\chi},X)$ in the derived category and its cohomology respectively. 
If $\mathcal{X}\in{}\{\ppq,T_{f}\}$ and 
$R_{\mathcal{X}}\in{}\{\I,\mathcal{O}_{L}\}$ is the corresponding coefficient ring, then
%and $R_{X}=\mathscr{S}^{-1}R_{\mathcal{X}}$, and let  $X=\mathscr{S}^{-1}\mathcal{X}$. Then
%we have natural isomorphisms in $\mathrm{D}(R_{X})$ and $_{R_{X}}\mathrm{Mod}$ respectively:
\[
       \derco(K_{\chi},X)\cong{}\derco(K_{\chi},\mathcal{X})\otimes_{R_{\mathcal{X}}}R_{X};\ \ 
       \exsel^{\ast}(K_{\chi},X)\cong{}\exsel^{\ast}(K_{\chi},\mathcal{X})\otimes_{R_{\mathcal{X}}}R_{X},
\]
which we consider as equalities in what follows.

Let $X\in{}\{\mathscr{S}^{-1}\mathbb{T}_{\mathbf{f}},\mathscr{S}^{-1}T_{f}\}$
(resp., $X\in{}\{\mathbb{A}_{\mathbf{f}},A_{f}\}$), and let $S\ni{}w\nmid{}p$.
Define the $R_{X}[G_{K_{\chi},w}]$-module $X_{w}^{-}:=X$
(resp., $X_{w}^{-}:=0$).
By the definition of \neko's Selmer complexes, there is  a long exact cohomology sequence
of $R_{X}$-modules \cite[Section 6]{Ne}:
\[
           \cdots\fre{}\bigoplus_{w\in{}S}H^{q-1}(K_{\chi,w},X_{w}^{-})\fre{}\exsel^{q}(K_{\chi},X)\fre{}H^{q}(G_{K_{\chi},S},X)\fre{}\bigoplus_{w\in{}S}
           H^{q}(K_{\chi,w},X_{w}^{-})\fre{}\cdots.
\]
In particular this gives an exact sequence of $R_{X}$-modules
\begin{equation}\label{eq:exselmer}
      X^{G_{K_{\chi},S}}\fre{}\bigoplus_{w\in{}S}H^{0}(K_{\chi,w},X_{w}^{-})\fre{}\exsel^{1}(K_{\chi},X)\fre{}\mathfrak{S}(K_{\chi},X)\fre{}0.
\end{equation}
Here $\mathfrak{S}(K_{\chi},X)=\mathfrak{S}(K_{\chi,S},X)$ is the  \emph{($S$-primitive, strict) Greenberg Selmer group 
of $X/K_{\chi}$}, defined by
\[
          \mathfrak{S}(K_{\chi},X):=\ker\lri{H^{1}(G_{K_{\chi},S},X)\longrightarrow{}\prod_{w\in{}S}
          H^{1}(K_{\chi,w},X_{w}^{-})}.
\]

\subsubsection{A control theorem}\label{control} We know that $\mathbb{I}_{\mathfrak{p}_{f}}$ is a discrete valuation ring,
and that its maximal ideal $\mathfrak{p}_{f}\mathbb{I}_{\mathfrak{p}_{f}}$ is generated by $\varpi_{\mathrm{wt}}:=\gamma_{\mathrm{wt}}-1\in{}\iw$
(see $(\ref{eq:unif})$). Write $V_{f}:=T_{f}\otimes_{\mathcal{O}_{L}}L$ and $\ppqll:=\mathbb{T}_{\mathbf{f}}\otimes_{\mathbb{I}}\Il$.
%and let 
%\[
%       \scob(K_{\chi},\ppqll):=\scob(K_{\chi},\mathbb{T})_{\mathfrak{p}_{f}}\in{}\mathrm{D}(\mathbb{I}_{\mathfrak{p}_{f}});\ \ 
%       \scob(K_{\chi},V_{f}):=\scob(K_{\chi},T_{f})\otimes_{\mathcal{O}_{L}}L\in{}\mathrm{D}(L).
%\]
%Similarly write $\exsel^{\ast}(K_{\chi},\ppql)=H^{\ast}\lri{\derco(K_{\chi},\ppqll)}$
%and $\exsel^{\ast}(K_{\chi},V_{f}):=H^{\ast}\lri{\derco(K_{\chi},V_f)}$ 
%for the corresponding localizations.
By \cite[Propositions 3.4.2 and 3.5.10]{Ne}, the arithmetic point $\phi_{f}\in{}\xari(\I)$
induces an exact triangle in $\mathrm{D}^{b}_{\mathrm{ft}}(\I_{\mathfrak{p}_{f}})$:
$$\derco(K_{\chi},\ppqll)\stackrel{\varpi_{\mathrm{wt}}}{\longrightarrow{}}\derco(K_{\chi},\ppqll)\stackrel{\phi_{f\ast}\ \ }{\longrightarrow{}}
\derco(K,V_{f}),$$ 
and then an isomorphism in $\mathrm{D}_{\mathrm{ft}}^{b}(L)$:
\begin{equation}\label{eq:controlder}
        c_{f}  :   \mathbf{L}\phi_{f}^{\ast}\Big(\derco(K_{\chi},\ppqll)\Big)\cong{}\derco(K_{\chi},V_{f}),
\end{equation}
where $\mathbf{L}\phi_{f}^{\ast} : \mathrm{D}^-(\Il)\fre{}\mathrm{D}(L)$ 
is the left derived functor of the base-change functor 
$\phi_{f}^{\ast}(\cdot):=\cdot\otimes_{\I,\phi_{f}}L$.
(Note that, since $f=f_{2}$ has integral Fourier coefficients,  the residue field $\Il/\mathfrak{p}_{f}\Il$ of $\mathbb{I}_{\mathfrak{p}_{f}}$ equals $L$.)
 This induces in cohomology short exact sequences of $L$-modules
\begin{equation}\label{eq:speco}
           0\fre{}\exsel^{q}(K_{\chi},\mathbb{T}_{\mathbf{f},\mathfrak{p}_{f}})/\varpi_{\mathrm{wt}}\fre{}\exsel^{q}(K_{\chi},V_{f})
           \fre{i_{\mathrm{wt}}}\exsel^{q+1}(K_{\chi},\ppqll)[\varpi_{\mathrm{wt}}]\fre{}0.
\end{equation}

\subsubsection{\neko's duality I: global cup-products}\label{glocp} 
Let $\mathcal{X}\in{}\{\ppq,T_{f}\}$, and let $\mathcal{R}\in{}\{\I,\mathcal{O}_{L}\}$ be the corresponding coefficient ring.
For $\mathscr{S}\in{}\{\I-\mathfrak{p}_{f},\mathcal{O}_{L}-\mathfrak{m}_{L}\}$
(where $\mathfrak{m}_{L}$ is the maximal ideal of $\mathcal{O}_{L}$), write
$X:=\mathscr{S}^{-1}\mathcal{X}\in{}\{\ppql,V_{f}\}$ and  $R_{X}:=\mathscr{S}^{-1}\mathcal{R}\in{}\{\Il,L\}$.
Let $$\pi_{X} : X\otimes_{R_{X}}X\fre{}R_{X}(1)$$ be the localization at $\mathscr{S}$
of the  perfect duality $\pi : \ppq\otimes_{\I}\ppq\fre{}\I(1)$ if $\mathcal{X}=\ppq$,
or the localisation at $\mathscr{S}$ of its $\phi_{f}$-base change $\pi_{f}:=\phi_{f}^{\ast}(\pi) : T_{f}\otimes_{\mathcal{O}_{L}}T_{f}
\fre{}\mathcal{O}_{L}(1)$ if $\mathcal{X}=T_{f}$ (see Section $\ref{ppq}$).
As a manifestation of \neko's wide generalization of Poitou-Tate duality, Section 6 of \cite{Ne} attaches 
to  $\pi_{X}$ 
a morphism in $\mathrm{D}^{b}_{\mathrm{ft}}(R_{X})$: 
\[
       \boldsymbol{\cup^\mathrm{Nek}_{\pi_{X}}}  : 
       \derco(K_{\chi},X)\derot{R_{X}}\derco(K_{\chi},X)\longrightarrow{}\mathbf{}
       \tau_{\geq{}3}\mathbf{R}\Gamma_{c,\mathrm{cont}}(K_{\chi},R_{X}(1))\cong{}
        R_{X}[-3],
\]
where $\mathbf{R}\Gamma_{c,\mathrm{cont}}(K_{\chi},-)$ denotes the complex of \emph{cochains with compact support}
\cite[Section 5]{Ne},
and the isomorphism comes (essentially) by the sum of the invariant maps of local class field theory for $v\in{}S$. 
The pairings  $\boldsymbol{\cup^{\mathrm{Nek}}_{\pi}}$ on $\derco(K_{\chi},\ppql)$ and 
$\boldsymbol{    \cup^{\mathrm{Nek}}_{\pi_{f}}}$ on $\derco(K_{\chi},V_{f})$
are compatible with respect to the isomorphism
$c_{f}: \mathbf{L}\phi_{f}^{\ast}\big(\derco(K_{\chi},\ppqll)\big)\cong{}\derco(K_{\chi},V_{f})$
in $\mathrm{D}(L)$
described  in $(\ref{eq:controlder})$.

The global cup-product pairing $\boldsymbol{   \cup^{\mathrm{Nek}}_{\pi_{X}}   }$ gives in cohomology pairings
\begin{equation}\label{eq:nekgcp}
        _{q}\cup^{\mathrm{Nek}}_{\pi_{X}} : \exsel^{q}(K_{\chi},X)\otimes_{R_{X}}\exsel^{3-q}(K_{\chi},X)\longrightarrow{}R_{X}
\end{equation}
(for every $q\in{}\Z$). Writing $\mathscr{R}_{X}:=\mathrm{Frac}(R_{X})$,
they  induce by adjunction  isomorphisms 
\begin{equation}\label{eq:ptcom}
       \mathrm{adj}\lri{{_{q}\cup^{\mathrm{Nek}}_{\pi_{X}}}}   : \exsel^{q}(K_{\chi},X)\otimes_{R_{X}}
       \mathscr{R}_{X}\cong{}
       \Hom{\mathscr{R}_{X}}\lri{\exsel^{3-q}(K_{\chi},X)\otimes_{R_{X}}\mathscr{R}_{X},\mathscr{R}_{X}},
\end{equation}
as follows  from \cite[Proposition 6.6.7]{Ne}, since $\dercts(K_{\chi,w},X)\cong{}0$ is acyclic
for every prime $w\nmid{}p$ of $K_{\chi}$.
(See also \cite[Propositions 12.7.13.3 and 12.7.13.4]{Ne}.)

\subsubsection{\neko's duality II: generalised Pontrjagin duality}
Let $X$ denote either $\ppq$ or $T_{f}$, let $R_{X}$ be either $\I$ or $\mathcal{O}_{L}$ (accordingly),
and let $\mathbb{A}_{X}:=\Hom{\mathrm{cont}}(X,\mu_{p^{\infty}})$
be the (discrete) Kummer dual of $X$. 
Appealing again to  \neko's  generalised  Poitou-Tate duality, we have  Pontrjagin dualities
\begin{equation}\label{eq:nekdu1}
              \exsel^{3-q}(K_{\chi},\mathbb{A}_{X})\cong{}\Hom{\mathrm{cont}}\lri{\exsel^{q}(K_{\chi},X),\divp}
              =:\exsel^{q}(K_{\chi},X)^{\ast}.
\end{equation}
We refer the reader to \cite[Section 6]{Ne} for the details. 

\subsubsection{\neko's duality III: generalised Cassels-Tate pairings}\label{ctsec}
Section $10$ of \cite{Ne}  --which provides a  generalisation
of a construction of Flach \cite{Fl}-- attaches to $\pi : \ppq\otimes_{\I}\ppq\fre{}\I(1)$ a  \emph{skew-symmetric} 
pairing
\[
         \boldsymbol{\cup^{\mathrm{CT}}_{\pi}} : \exsel^{2}(K_{\chi},\ppq)_{\mathrm{tors}}\otimes_{\I}
         \exsel^{2}(K_{\chi},\ppq)_{\mathrm{tors}}\longrightarrow{}\mathrm{Frac}(\I)/\mathbb{I},
\]
where $M_{\mathrm{tors}}=\ker\lri{M\fre{i}M\otimes_{\I}\mathrm{Frac}(\I)}$ denotes the $\I$-torsion submodule of $M$.
Denote by
\begin{equation}\label{eq:nekdu2}
         \cup^{\mathrm{CT}}_{\pi} : \exsel^{2}(K_{\chi},\ppqll)_{\mathrm{tors}}\otimes_{\Il}
         \exsel^{2}(K_{\chi},\ppqll)_{\mathrm{tors}}\longrightarrow{}\mathrm{Frac}(\I_{\mathfrak{p}_{f}})/\mathbb{I}_{\mathfrak{p}_{f}}
\end{equation}
its localization at $\mathfrak{p}_{f}$, $N_{\mathrm{tors}}:=N[\varpi_{\mathrm{wt}}^{\infty}]$ denoting now the 
$\I_{\mathfrak{p}_{f}}$-torsion 
submodule of $N$ (see $(\ref{eq:unif})$).
As proved in \cite[Proposition 12.7.13.3]{Ne}, $\cup_{\pi}^{\mathrm{CT}}$ is a \emph{perfect} pairing, i.e. its adjoint 
\begin{equation}\label{eq:ctnek}
           \mathrm{adj}\lri{\cup_{\pi}^{\mathrm{CT}}} : \exsel^{2}(K_{\chi},\ppqll)_{\mathrm{tors}}\cong{}\Hom{\I_{\mathfrak{p}_{f}}}
           \Big(\exsel^{2}(K_{\chi},\ppqll)_{\mathrm{tors}},\mathrm{Frac}(\Il)/\Il\Big)
\end{equation}
is an isomorphism.
We call $\cup_{\pi}^{\mathrm{CT}}$ \emph{\neko{} (localized) Cassels-Tate pairing} on $\ppqll$.
This is the pairing denoted $\cup_{\pi(\mathfrak{p}_{f}),0,2,2}$ in \emph{loc. cit.}
We refer to Sections 2.10.14, 10.2 and 10.4 of \cite{Ne} for the  definition of $\boldsymbol{\cup_{\pi}^{\mathrm{CT} } }$.

\subsubsection{Comparison with Bloch-Kato Selmer groups}\label{compselclassicsec} 
Recall that  $V_{f}:=T_{f}\otimes_{\mathcal{O}_{L}}L$,
and  $V_{f,v}^{\pm}:=T_{f,v}^{\pm}\otimes_{\mathcal{O}_{L}}L$
for $v|p$.
Then $V_{f}\cong{}\ppql\otimes_{\Il,\phi_{f}}L$ is isomorphic to the $\phi_{f}$-base change of the localisation 
$\ppql$, and similarly $V_{f,v}^{\pm}$ is isomorphic to the $\phi_{f}$-base change of the localisation 
of $\mathbb{T}_{\mathbf{f},v}^{\pm}$ at $\mathfrak{p}_{f}$. 
By $(\ref{eq:ES})$, combined with the Chebotarev density theorem and \cite[Chapters V and VII]{Sil-1}, there is 
an  isomorphism of $L[G_{K_{\chi},S}]$-modules (cf. Section $\ref{ppq}$)
\begin{equation}\label{eq:isoV_{p}}
                     V_{f}\cong{}V_{p}(A)\otimes_{\Q_{p}}L,
\end{equation}
where $V_{p}(A):=\mathrm{Ta}_{p}(A)\otimes_{\Z_{p}}\Q_{p}$
is the $p$-adic Tate module of $A/\Q$ with $\Q_{p}$-coefficients.
We fix from now on such an isomorphism, and we will use it to identify $V_{f}$ with $V_{p}(A)\otimes_{\Q_{p}}L$. 

Consider the classical (or 
 Bloch-Kato \cite{B-K}) Selmer group  attached to $V_{p}(A)/K_{\chi}$ via Kummer theory:
\[
           \mathrm{Sel}_{p}(A/K_{\chi}):=\ker\lri{H^{1}(K_{\chi,S},V_{p}(A))\longrightarrow{}\prod_{v|p}\frac{H^{1}(K_{\chi,v},V_{p}(A))}
           {A(K_{\chi,v})\widehat{\otimes}\Q_{p}}}
\]
(it is easily verified using Tate local duality and \cite[Chapter VII]{Sil-1} that $H^{1}(K_{\chi,w},V_{p}(A))=0$ for $w\nmid{}p$),
sitting in a short exact sequence 
\begin{equation}\label{eq:kummera}
     0\fre{}A(K_{\chi})\widehat{\otimes}\Q_{p}\fre{}\mathrm{Sel}_{p}(A/K_{\chi})\fre{}V_{p}\lri{\sha(A/K_{\chi})}\fre{}0,
\end{equation}      where 
$\sha(A/K_{\chi})$ is the Tate-Shafarevich group of $A/K_{\chi}$ and $V_{p}(\cdot{}):=\inlim{}_{n\geq{}1}(\cdot{})_{p^{n}}\otimes_{\Z_{p}}\Q_{p}$ is the $p$-adic Tate module of the abelian group $(\cdot{})$ with $\Q_{p}$-coefficients.
R. Greenberg \cite{Gr-1} has proved that 
\[
                 \mathrm{Sel}_{p}(A/K_{\chi})\otimes_{\Q_{p}}L=\mathfrak{S}(K_{\chi},V_{f}).
\]
Since $a_{p}=a_{p}(A)=+1$ (as $A/\Q_{p}$ has split multiplicative reduction), the $G_{\Q_{p}}$-representation 
$V_{f}=V_{p}(A)\otimes_{\Q_{p}}L$ is a Kummer extension of the trivial representation $L$, i.e. 
$V_{f,v}^{+}\cong{}L(1)$ and $V_{f,v}^{-}\cong{}L$ for every $v|p$
(where $L$ is the trivial representation of $G_{K_{\chi},v}$
and $L(1):=L\otimes_{\Q_{p}}\Q_{p}(1)$ is its Tate twist).
As $H^{0}(G_{K_{\chi},S},V_{f})\subset{}H^{0}(G_{K_{\chi},w},V_{f})=0$
for every $w\nmid{}p$ (by \cite[Chapter VII]{Sil-1} and local Tate duality),  $(\ref{eq:exselmer})$ gives rise to an exact sequence
\begin{equation}\label{eq:exselclassic}
               0\fre{}\bigoplus_{v|p}L\fre{}\exsel^{1}(K_{\chi},V_{f})\fre{}\mathrm{Sel}_{p}(A/K_{\chi})\otimes_{\Q_{p}}L\fre{}0.
\end{equation}
(See Section $\ref{exalg}$ below for more details.)
\subsubsection{Galois conjugation}\label{galconj} Let $X$ be as in Section $\ref{genenek}$.
Section $8$ of \cite{Ne} defines 
a natural action of $\mathrm{Gal}(K_{\chi}/\Q)$ on $\exsel^{q}(K_{\chi},X)$, making it a 
$R_{X}[\mathrm{Gal}(K_{\chi}/\Q)]$-module.
If $\tau$ is a nontrivial automorphism on $K_{\chi}$,
we will write $\tau(x)$ or $x^{\tau}$ for its action on $x\in{}\exsel^{q}(K_{\chi},X)$.
To be short, all the relevant constructions we discussed above commute with the action of $\mathrm{Gal}(K_{\chi}/\Q)$.
In particular, we mention the following properties.

\neko's global cup products $_{q}\cup^{\mathrm{Nek}}_{\pi_{X}}$ (defined in $(\ref{eq:nekgcp})$) 
are $\mathrm{Gal}(K_{\chi}/\Q)$-equivariant
\cite[Section 8]{Ne}.

\neko's Pontrjagin duality isomorphisms $(\ref{eq:nekdu1})$ are $\mathrm{Gal}(K_{\chi}/\Q)$-equivariant 
\cite[Prop. 8.8.9]{Ne}.

The abstract Cassels-Tate pairing $\cup_{\pi}^{\mathrm{CT}}$ is $\mathrm{Gal}(K_{\chi}/\Q)$-equivariant \cite[Section 10.3.2]{Ne}.

The exact sequences $(\ref{eq:exselmer})$, $(\ref{eq:speco})$ and $(\ref{eq:exselclassic})$ are  $\mathrm{Gal}(K_{\chi}/\Q)$-equivariant.
(In case $K_{\chi}/\Q$ is quadratic and $p$ splits in $K_{\chi}$, the action of the non-trivial element $\tau\in{}\mathrm{Gal}(K_{\chi}/K)$ 
on the first term $\bigoplus_{v|p}L=L\oplus{}L$ in $(\ref{eq:exselclassic})$ is given by permutation of the factors:
$(q,q^{\prime})^{\tau}=(q^{\prime},q)$ for every $q,q^{\prime}\in{}L$.)

\subsection{The half-twisted weight pairing}\label{htwp} Define \emph{\neko's half-twisted weight pairing} by the composition
\begin{align*}
              \dia{-,-}_{V_{f},\pi}^{\mathrm{Nek}} : \exsel^{1}(K_{\chi},V_{f})\otimes_{L}\exsel^{1}(K_{\chi},V_{f})
              \stackrel{i_{\mathrm{wt}}\otimes{}i_{\mathrm{wt}}}{\longrightarrow{}} & 
              \exsel^{2}(K_{\chi},\ppql)[\varpi_{\mathrm{wt}}]\otimes_{\Il}
              \exsel^{2}(K_{\chi},\ppql)[\varpi_{\mathrm{wt}}] \\
              \stackrel{\cup_{\pi}^{\mathrm{CT}}}{\longrightarrow{}}  
              \Big(\mathrm{Frac}(\Il)/\Il\Big)&[\varpi_{\mathrm{wt}}]\stackrel{\theta_{\mathrm{wt}}}{\cong}
              \Il/\mathfrak{p}_{f}\Il\stackrel{\phi_{f}}{\cong{}}L\stackrel{\times{}\ell_{\mathrm{wt}}}{\cong}L,
\end{align*}
where the notations are as follows. The morphism $i_{\mathrm{wt}} : \exsel^{1}(K_{\chi},V_{f})\fre{}\exsel^{2}(K_{\chi},\ppql)[\varpi_{\mathrm{wt}}]$
is the one appearing in the exact sequence  $(\ref{eq:speco})$ (taking $q=1$).
$\cup^{\mathrm{CT}}_{\pi}$ is \neko's  Cassels-Tate pairing attached to $\pi : \ppq\otimes_{\I}\ppq\fre{}\I(1)$,
and defined in Section  $\ref{ctsec}$. $\theta_{\mathrm{wt}} : \big(\mathrm{Frac}(\Il)/\Il\big)[\varpi_{\mathrm{wt}}]\cong{}\Il/\mathfrak{p}_{f}\Il$
is defined by $\theta_{\mathrm{wt}}\big(\frac{a}{\varpi_{\mathrm{wt}}}\ \mathrm{mod}\ \Il\big):=a\ \mathrm{mod}\ \mathfrak{p}_{f}$,
for every $a\in{}\Il$.
(We remind  that $\varpi_{\mathrm{wt}}\in{}\iw$ is a uniformiser of $\Il$ by $(\ref{eq:unif})$).
Finally, $\ell_{\mathrm{wt}}:=\log_{p}(\gamma_{\mathrm{wt}})$ (where $\varpi_{\mathrm{wt}}:=\gamma_{\mathrm{wt}}-1$).
Note that both the morphisms $i_{\mathrm{wt}}$ and $\theta_{\mathrm{wt}}$ depend on the choice of the uniformiser 
$\varpi_{\mathrm{wt}}$.
Multiplication by $\ell_{\mathrm{wt}}$ serves the purposes of removing the dependence on this choice.

Since $\cup_{\pi}^{\mathrm{CT}}$ is a skew-symmetric, $\mathrm{Gal}(K_{\chi}/\Q)$-equivariant pairing,
and since $i_{\mathrm{wt}}$ is a $\mathrm{Gal}(K_{\chi}/\Q)$-equivariant morphism (cf. Section $\ref{galconj}$),
%we immediately deduce that 
$\dia{-,-}_{V_{f},\pi}^{\mathrm{Nek}}$ is a \emph{skew-symmetric, $\mathrm{Gal}(K_{\chi}/\Q)$-equivariant
pairing}. (Of course, here we consider on $L$ the trivial $\mathrm{Gal}(K_{\chi}/\Q)$-action.) 

The aim of  this section is to prove the following key proposition,
whose proof uses all the power of \neko's results mentioned above. 
Let $\chi$ be (as above) a quadratic Dirichlet character of conductor coprime with $Np$.
%and write $\epsilon$ to denote either $\chi$ or the trivial character.
Write $\dia{-,-}_{V_{f},\pi}^{\mathrm{Nek},\chi}$
for the restriction of $\dia{-,-}_{V_{f},\pi}^{\mathrm{Nek}}$ to 
$\exsel^{1}(K_{\chi},V_{f})^{\chi}\otimes_{L}\exsel^{1}(K_{\chi},V_{f})^{\chi}$.
(Of course, if $\chi$ is the trivial character, i.e. if $K_{\chi}=\Q$, we are defining nothing new.)
Given an $\I$-module $M$,
we say that $M$ is \emph{semi-simple at $\mathfrak{p}_{f}$} if $M_{\mathfrak{p}_{f}}$
is a semi-simple $\Il$-module,
and we write 
$\mathrm{length}_{\mathfrak{p}_{f}}(M)$
to denote the length of $M_{\mathfrak{p}_{f}}$ over $\Il$.

\begin{proposition}\label{nondeg} Let $\chi$ be a quadratic Dirichlet character of conductor coprime with $Np$,
and assume that $p$ splits in $K_{\chi}$. 
%Let $\epsilon$ denote either $\chi$ or the trivial character.
Then the following conditions are equivalent:

$1.$ $\dia{-,-}_{V_{f},\pi}^{\mathrm{Nek},\chi}$ is a  non-degenerate $L$-bilinear form on $\exsel^{1}(K_{\chi},V_{f})^{\chi}$.

$2.$ $$\mathrm{length}_{\mathfrak{p}_{f}}\Big(\exsel^{2}(K_{\chi},\ppq)^{\chi}\Big)=\dim_{L}
\Big(\exsel^{1}(K_{\chi},V_{f})^{\chi}\Big).$$

$3.$ $\exsel^{2}(K_{\chi},\ppq)^{\chi}$ is a torsion $\I$-module, which is semi-simple at $\mathfrak{p}_{f}$.\\
If these properties are satisfied, then $X_{\mathrm{Gr}}^{\mathrm{cc}}(\mathbf{f}/K_{\chi})^{\chi}$
is a torsion $\I$-module, which is semi-simple at $\mathfrak{p}_{f}$, and 
$$\mathrm{length}_{\mathfrak{p}_{f}}\Big(X_{\mathrm{Gr}}^{\mathrm{cc}}(\mathbf{f}/K_{\chi})^{\chi}\Big)
=\dim_{\Q_{p}}\Big(\mathrm{Sel}_{p}(A/K_{\chi})^{\chi}\Big).$$
\end{proposition}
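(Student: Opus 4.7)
The plan is to factor the weight pairing through the control sequence $(\ref{eq:speco})$ in degree $q=1$. Taking $\chi$-isotypic components (exact since the coefficients are in characteristic zero) yields the short exact sequence
\[
0\to\exsel^1(K_{\chi},\ppql)^{\chi}/\varpi_{\mathrm{wt}}\to\exsel^1(K_{\chi},V_f)^{\chi}\stackrel{i_{\mathrm{wt}}^{\chi}}{\longrightarrow}\exsel^2(K_{\chi},\ppql)^{\chi}[\varpi_{\mathrm{wt}}]\to 0.
\]
By construction $\dia{-,-}^{\mathrm{Nek},\chi}_{V_f,\pi}$ is the pullback along $i_{\mathrm{wt}}^{\chi}\otimes i_{\mathrm{wt}}^{\chi}$ of the restriction of \neko's Cassels-Tate pairing $\cup_{\pi}^{\mathrm{CT}}$, so its non-degeneracy decomposes into \emph{(a)} $\ker(i_{\mathrm{wt}}^{\chi})=0$, equivalent by Nakayama to $\exsel^1(K_{\chi},\ppql)^{\chi}=0$, and \emph{(b)} non-degeneracy of the induced pairing on $\exsel^2(K_{\chi},\ppql)^{\chi}[\varpi_{\mathrm{wt}}]$. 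A standard DVR computation using the perfect duality $(\ref{eq:ctnek})$ shows that the restriction of a perfect self-duality of a torsion $\Il$-module $T$ to $T[\varpi_{\mathrm{wt}}]$ is non-degenerate precisely when $\varpi_{\mathrm{wt}}T=0$ (the obstruction to non-degeneracy being $\varpi_{\mathrm{wt}}T\cap T[\varpi_{\mathrm{wt}}]$); hence (b) is equivalent to the torsion submodule of $\exsel^2(K_{\chi},\ppql)^{\chi}$ being semi-simple at $\mathfrak{p}_f$.

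The crucial auxiliary input is the $\Il$-torsion-freeness of $\exsel^1(K_{\chi},\ppql)^{\chi}$. The control sequence in degree $q=0$ realises $\exsel^1(K_{\chi},\ppql)^{\chi}[\varpi_{\mathrm{wt}}]$ as a quotient of $\exsel^0(K_{\chi},V_f)^{\chi}$, and the latter vanishes by the absolute irreducibility of $V_f|_{G_{K_{\chi}}}$ (which follows from Hypothesis $\ref{h2}$ together with the fact that $V_f$ is not induced from the at-most-quadratic extension $K_{\chi}/\Q$). Global cup-product duality $(\ref{eq:ptcom})$ over $\mathrm{Frac}(\Il)$ then equates the $\Il$-ranks of $\exsel^1(K_{\chi},\ppql)^{\chi}$ and $\exsel^2(K_{\chi},\ppql)^{\chi}$, so (b) forces $\exsel^2$ to be torsion, hence $\exsel^1$ to have rank zero, and torsion-freeness promotes this to $\exsel^1(K_{\chi},\ppql)^{\chi}=0$, i.e.\ (a). This gives $(1)\iff(3)$. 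The equivalence $(2)\iff(3)$ is a direct length count: under (3), semi-simplicity yields $\mathrm{length}_{\mathfrak{p}_f}\exsel^2(K_{\chi},\ppq)^{\chi}=\dim_L\exsel^2(K_{\chi},\ppql)^{\chi}[\varpi_{\mathrm{wt}}]$, which together with the control sequence and (a) equals $\dim_L\exsel^1(K_{\chi},V_f)^{\chi}$; conversely, the inequality $\mathrm{length}\ge\dim[\varpi_{\mathrm{wt}}]$ combined with the torsion-freeness of $\exsel^1(K_{\chi},\ppql)^{\chi}$ forces $(2)\Rightarrow(3)$.

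For the final length identity, I apply \neko's Pontrjagin duality $(\ref{eq:nekdu1})$ to identify $\exsel^2(K_{\chi},\ppq)\cong\exsel^1(K_{\chi},\mathbb{A}_{\mathbf{f}})^{\ast}$ and Pontrjagin-dualise the four-term exact sequence $(\ref{eq:exselmer})$ applied to $X=\mathbb{A}_{\mathbf{f}}$; after taking $\chi$-parts and localising at $\mathfrak{p}_f$, this realises $X_{\mathrm{Gr}}^{\mathrm{cc}}(\mathbf{f}/K_{\chi})^{\chi}_{\mathfrak{p}_f}$ as a submodule of $\exsel^2(K_{\chi},\ppq)^{\chi}_{\mathfrak{p}_f}$ with cokernel of length equal to $\dim_L(\bigoplus_{v\mid p}L)^{\chi}$, matching the Tate-period contribution in the Bloch-Kato comparison $(\ref{eq:exselclassic})$. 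Combining this with $\dim_L\exsel^1(K_{\chi},V_f)^{\chi}=\dim_{\Q_p}\mathrm{Sel}_p(A/K_{\chi})^{\chi}+\dim_L(\bigoplus_{v\mid p}L)^{\chi}$ read off from $(\ref{eq:exselclassic})$ yields the required formula. The main technical obstacle is the careful matching of the $\Il$-lengths of the $H^0(K_{\chi,v},\mathbb{A}_{\mathbf{f},v}^{-})^{\chi}$-contributions with their Bloch-Kato counterparts in the exceptional-zero situation $a_p(A)=+1$, where the usual Euler-factor cancellation degenerates and the local lengths must be tracked directly from the Tate-period structure of the Greenberg quotient $\mathbb{T}_{\mathbf{f},v}^-$.
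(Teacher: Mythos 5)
Your argument for the equivalence of (1), (2) and (3) follows essentially the same route as the paper: the control sequence $(\ref{eq:speco})$ in degrees $q=0,1$ gives torsion-freeness of $\exsel^1(K_\chi,\ppql)^\chi$ and realises the weight pairing through $i_{\mathrm{wt}}^\chi$; the perfect Cassels-Tate pairing $(\ref{eq:ctnek})$ reduces (b) to semi-simplicity of the torsion submodule of $\exsel^2(K_\chi,\ppql)^\chi$; and the rank equality from $(\ref{eq:ptcom})$ closes the loop. One sentence, however, has the implications backwards: it is condition (a), i.e.\ $\exsel^1(K_\chi,\ppql)^\chi=0$, that forces $\exsel^2(K_\chi,\ppql)^\chi$ to be torsion (via the rank equality and torsion-freeness of $\exsel^1$), not condition (b). Condition (b) only constrains the torsion submodule $\exsel^2(K_\chi,\ppql)^\chi_{\mathrm{tors}}$ and says nothing about the rank; as written, your chain "(b) $\Rightarrow$ $\exsel^2$ torsion $\Rightarrow$ (a)" is not valid, though after relabelling (a) $\iff$ $\exsel^2$ torsion and (b) $\iff$ $\exsel^2_{\mathrm{tors}}$ semi-simple, the conclusion (1)$\iff$(3) goes through exactly as in the paper.

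The final length formula is where your proposal has a genuine gap. After dualising $(\ref{eq:exselmer})$ (and killing the $H^0(G_{K_\chi,S},\mathbb{A}_{\mathbf{f}})$-term, which you should justify, cf.\ $(\ref{eq:rio2})$), one obtains a short exact sequence with cokernel $\big(\bigoplus_{v\mid p}H^2(K_{\chi,v},\ppq^+_v\otimes_\I\Il)\big)^\chi\cong H^2(\Q_p,\ppq^+\otimes_\I\Il)$. You \emph{assert} this has $\Il$-length $1$, "matching the Tate-period contribution", but that matching is not a proof: the control sequence only shows $H^2(\Q_p,\ppq^+\otimes_\I\Il)\cong\Il/\varpi_{\mathrm{wt}}^n\Il$ for some $n\geq 1$, and both the $\varpi_{\mathrm{wt}}$-torsion and $\varpi_{\mathrm{wt}}$-cotorsion are one-dimensional for \emph{every} $n$, so no specialisation argument can pin down $n$. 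In the paper this is Lemma $\ref{sublemma}$, whose proof is an explicit local Tate-duality computation invoking the Greenberg-Stevens formula $\frac{d}{dk}a_p(k)|_{k=2}=-\tfrac12\mathscr{L}_p(A)$ to show that a certain class in $H^1(\Q_p,L)$ is non-zero; your phrase "tracked from the Tate-period structure" names the difficulty without resolving it. There is, in fact, a shortcut available in the present context that you did not use: since you are assuming (3), $\exsel^2(K_\chi,\ppql)^\chi$ is semi-simple, and $\Il/\varpi_{\mathrm{wt}}^n\Il$ is a non-zero quotient of it, whence $n=1$ automatically. Either way, the step needs an argument; as written, without it one only obtains the inequality $\mathrm{length}_{\mathfrak{p}_f}(X^{\mathrm{cc}}_{\mathrm{Gr}}(\mathbf{f}/K_\chi)^\chi)\leq\dim_{\Q_p}\mathrm{Sel}_p(A/K_\chi)^\chi$, which is not enough for the application in Step IV of the paper.
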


The proposition will be an immediate consequence of the following three lemmas (in which we will prove separately 
the equivalences  $1\iff{}3$, $3\iff{}2$ and the last statement  respectively).

\begin{lemma}\label{seminondeg} $\dia{-,-}^{\mathrm{Nek},\chi}_{V_{f},\pi}$ is non-degenerate 
if and only if 
$\exsel^{2}(K_{\chi},\ppql)^{\chi}$ is a torsion, semi-simple $\Il$-module.
\end{lemma}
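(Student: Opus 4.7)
The plan is to unwind the composition defining $\dia{-,-}^{\mathrm{Nek},\chi}_{V_{f},\pi}$, splitting the question into two independent assertions, and then to settle each using \neko's duality together with one piece of pure DVR linear algebra. Write $N_{q} := \exsel^{q}(K_{\chi},\ppql)^\chi$ and $E := \exsel^{1}(K_{\chi},V_{f})^\chi$. Taking $\chi$-components of the control sequence $(\ref{eq:speco})$ for $q=1$ (still exact, as $p\ne 2$ makes the projector onto the $\chi$-part $L$-linear) yields
\[
0 \to N_{1}/\varpi_{\mathrm{wt}} \to E \xrightarrow{\ i_{\mathrm{wt}}\ } N_{2}[\varpi_{\mathrm{wt}}] \to 0.
\]
Since $\cup^{\mathrm{CT}}_{\pi}$ is $\mathrm{Gal}(K_{\chi}/\Q)$-equivariant (Section \ref{galconj}) and the coefficients $\mathrm{Frac}(\Il)/\Il$ carry the trivial Galois action, the two $\chi$-components of $\exsel^{2}(K_{\chi},\ppql)_{\mathrm{tors}}$ are mutually $\cup^{\mathrm{CT}}_{\pi}$-orthogonal, so the restriction of $\cup^{\mathrm{CT}}_{\pi}$ to $(N_{2})_{\mathrm{tors}}$ remains a perfect $\Il$-bilinear pairing. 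By construction $\dia{-,-}^{\mathrm{Nek},\chi}_{V_{f},\pi}$ is $\ell_{\mathrm{wt}}\cdot\phi_{f}\circ\theta_{\mathrm{wt}}$ applied to $\cup^{\mathrm{CT}}_{\pi}\circ(i_{\mathrm{wt}}\otimes i_{\mathrm{wt}})$, so its non-degeneracy is equivalent to the conjunction of $(a)$ injectivity of $i_{\mathrm{wt}}:E\to N_{2}[\varpi_{\mathrm{wt}}]$, and $(b)$ non-degeneracy of the restriction of $\cup^{\mathrm{CT}}_{\pi}$ from $(N_{2})_{\mathrm{tors}}$ to its socle $N_{2}[\varpi_{\mathrm{wt}}]$.

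For $(a)$, Nakayama's lemma reduces injectivity to the vanishing $N_{1}=0$. Since $A(K_{\chi})[p^{\infty}]$ is finite by Mordell-Weil, one has $V_{p}(A)^{G_{K_{\chi},S}}=0$, hence $\exsel^{0}(K_{\chi},V_{f})^\chi=0$; the $q=0$ case of $(\ref{eq:speco})$, taken in the $\chi$-component, then gives $N_{1}[\varpi_{\mathrm{wt}}]=0$, so the finitely generated $\Il$-module $N_{1}$ is torsion-free and hence free over the DVR $\Il$. On the other hand, the $\chi$-component of the generic duality $(\ref{eq:ptcom})$ is perfect (by the same orthogonality argument) and forces $\mathrm{rank}_{\Il}N_{1}=\mathrm{rank}_{\Il}N_{2}$. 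Consequently $N_{1}=0$ if and only if $N_{2}$ is $\Il$-torsion, which is the torsion half of the claimed conclusion.

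The main obstacle is $(b)$, where the semi-simplicity hypothesis will enter. Writing $(-)^{\vee}:=\mathrm{Hom}_{\Il}(-,\mathrm{Frac}(\Il)/\Il)$, the adjoint of the restricted pairing factors as
\[
N_{2}[\varpi_{\mathrm{wt}}] \xrightarrow{\ \sim\ } \big((N_{2})_{\mathrm{tors}}^{\vee}\big)[\varpi_{\mathrm{wt}}] \;=\; \mathrm{Hom}_{\Il}\big((N_{2})_{\mathrm{tors}}/\varpi_{\mathrm{wt}},L\big) \xrightarrow{\ \mathrm{restr.}\ } \mathrm{Hom}_{L}\big(N_{2}[\varpi_{\mathrm{wt}}],L\big),
\]
where the first arrow is induced by the perfect pairing on $(N_{2})_{\mathrm{tors}}$ and the last is the $L$-dual of the natural composition $N_{2}[\varpi_{\mathrm{wt}}]\hookrightarrow (N_{2})_{\mathrm{tors}}\twoheadrightarrow (N_{2})_{\mathrm{tors}}/\varpi_{\mathrm{wt}}$. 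Hence non-degeneracy of the restricted pairing is equivalent to this natural map being an isomorphism; by the structure theorem for finitely generated modules over the DVR $\Il$, a direct inspection of each cyclic summand $\Il/\varpi_{\mathrm{wt}}^{n_{i}}$ shows the structural map is an isomorphism exactly when every $n_{i}=1$, i.e. when $(N_{2})_{\mathrm{tors}}$ is annihilated by $\varpi_{\mathrm{wt}}$, i.e. semi-simple over $\Il$. Combining $(a)$ and $(b)$ yields the asserted equivalence: $\dia{-,-}^{\mathrm{Nek},\chi}_{V_{f},\pi}$ is non-degenerate precisely when $\exsel^{2}(K_{\chi},\ppql)^\chi$ is a torsion, semi-simple $\Il$-module.
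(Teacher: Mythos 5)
Your proof is correct and follows essentially the same route as the paper's. The decomposition into $(a)$ injectivity of $i_{\mathrm{wt}}$ and $(b)$ non-degeneracy of the restriction of \neko's Cassels--Tate pairing to the $\varpi_{\mathrm{wt}}$-torsion is precisely what the paper does, and the use of $\exsel^0(K_{\chi},V_f)=0$ to get torsion-freeness of $\exsel^{1}(K_{\chi},\ppql)^{\chi}$, combined with the generic duality $(\ref{eq:ptcom})$ in its $\chi$-equivariant form, matches the paper's handling of $(a)$. The only stylistic difference is in $(b)$: you factor the adjoint of the restricted pairing through $\big((N_2)_{\mathrm{tors}}^{\vee}\big)[\varpi_{\mathrm{wt}}]$ and identify non-degeneracy with the natural map $N_2[\varpi_{\mathrm{wt}}]\to N_2/\varpi_{\mathrm{wt}}$ being an isomorphism, whereas the paper computes the radical directly as $\varpi_{\mathrm{wt}}N^{\chi}\cap N^{\chi}[\varpi_{\mathrm{wt}}]$ (which is exactly the kernel of that same map) and then reads off semi-simplicity from the structure theorem. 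Same content, same conclusion.
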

\begin{proof} Taking the $\chi$-component of the exact sequence $(\ref{eq:speco})$, we see that the restrictions
\[
               i_{\mathrm{wt}}^{\chi}=i_{\mathrm{wt}}^{q,\chi} : \exsel^{q}(K_{\chi},V_{f})^{\chi}\longrightarrow{}\exsel^{q+1}(K_{\chi},\ppql)^{\chi}[\varpi_{\mathrm{wt}}]
\]
of the morphisms $i_{\mathrm{wt}}=i_{\mathrm{wt}}^{q}$ defined in 
$(\ref{eq:speco})$ are surjective. Since $\exsel^{0}(K_{\chi},V_{f})\subset{}H^{0}(G_{K_{\chi,S}},V_{f})=0$,
this implies in particular that $\exsel^{1}(K_{\chi},\ppql)^{\chi}$ is torsion free,
and $i_{\mathrm{wt}}^{1,\chi}$
is  injective if and only if $\exsel^{1}(K_{\chi},\ppql)^{\chi}=0$.
% has rank zero over $\I_{\mathfrak{p}_{f}}$.
Moreover, since $\chi$ is quadratic and $_{q}\cup^{\mathrm{Nek}}_{\pi}$
is $\mathrm{Gal}(K_{\chi}/\Q)$-equivariant,  the  duality  isomorphism $(\ref{eq:ptcom})$ shows that 
the latter condition is equivalent 
to the fact that 
$\exsel^{2}(K_{\chi},\ppql)^{\chi}$ is a torsion $\Il$-module. 

Write for simplicity $N:=\exsel^{2}(K_{\chi},\ppql)_{\mathrm{tors}}$ for the $\Il$-torsion submodule of $\exsel^{2}(K_{\chi},\ppql)$.
Since $\cup_{\pi}^{\mathrm{CT}}$ is  $\mathrm{Gal}(K_{\chi}/\Q)$-equivariant, $p\not=2$ and $\chi$
is quadratic, the isomorphism $(\ref{eq:ctnek})$
restricts to an isomorphism
\[
                 \mathrm{adj}\lri{\cup_{\pi}^{\mathrm{CT}}} : N^{\chi}\cong{}\Hom{\Il}(N^{\chi},\mathrm{Frac}(\Il)/\Il).
\]
Let $\cup_{\pi,\varpi_{\mathrm{wt}}}^{\mathrm{CT},\chi} : N^{\chi}[\varpi_{\mathrm{wt}}]
\otimes{}N^{\chi}[\varpi_{\mathrm{wt}}]\fre{}\lri{\mathrm{Frac}(\Il)/\Il}[\varpi_{\mathrm{wt}}]$ denote the restriction of $\cup^{\mathrm{CT}}_{\pi}$
to the $\varpi_{\mathrm{wt}}$-torsion of $N^{\chi}$.
It follows by the preceding isomorphism that the right (or left) 
radical of $\cup_{\pi,\varpi_{\mathrm{wt}}}^{\mathrm{CT},\chi}$ equals 
$\mathcal{N}^{\chi}:=\varpi_{\mathrm{wt}}N^{\chi}\cap{}N^{\chi}[\varpi_{\mathrm{wt}}]$.
In other words, $\cup_{\pi,\varpi_{\mathrm{wt}}}^{\mathrm{CT},\chi}$ is non-degenerate if and only if 
$\mathcal{N}^{\chi}=0$.
On the other hand, as $\varpi_{\mathrm{wt}}$ is a uniformiser for $\Il$, 
the structure theorem for finite modules over discrete valuation rings gives
an isomorphism of $\Il$-modules $N^{\chi}\cong{}
\bigoplus_{j=0}^{\infty}\lri{\Il/(\varpi_{\mathrm{wt}})^{j}}^{e_{j}}$, for positive integers $e_{j}$
such that $e_{j}=0$ for $j\gg0$.
Then $\mathcal{N}^{\chi}=0$ if and only if $e_{j}=0$ for every $j>1$, i.e. 
if and only if $N^{\chi}$ is semi-simple. 

Since $i_{\mathrm{wt}}^{\chi}=i_{\mathrm{wt}}^{1,\chi}$ is surjective, it follows by the definitions that  $\dia{-,-}^{\mathrm{Nek},\chi}_{V_{f},\pi}$
is non-degenerate (i.e. has trivial right$=$left radical) if and only if $i_{\mathrm{wt}}^{\chi}$ is injective 
and $\cup_{\pi,\varpi_{\mathrm{wt}}}^{\mathrm{CT},\chi}$ has trivial radical. Together 
with the preceding discussion, this concludes the proof of the lemma.
\end{proof}

\begin{lemma}\label{lala} $\mathrm{length}_{\mathfrak{p}_{f}}\Big(\exsel^{2}(K_{\chi},\mathbb{T}_{\mathbf{f}})^{\chi}
\Big)\geq{}\dim_{L}\Big(\exsel^{1}(K_{\chi},V_{f})^{\chi}\Big)$,
and equality holds if and only if $\exsel^{2}(K_{\chi},\ppql)^{\chi}$ is a torsion, semi-simple $\Il$-module. 
\end{lemma}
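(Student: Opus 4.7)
The plan is to analyse both sides locally at $\mathfrak{p}_{f}$ on $\chi$-eigenparts. Since $p$ is odd and $|\mathrm{Gal}(K_{\chi}/\Q)|\leq{}2$, projection to the $\chi$-component is exact on $\I_{\mathfrak{p}_{f}}$-modules. Invoking the control sequence $(\ref{eq:speco})$ with $q=0$, restricted to $\chi$-parts, and using that $\exsel^{0}(K_{\chi},V_{f})$ injects into $H^{0}(G_{K_{\chi},S},V_{f})=0$ (the latter vanishing since $V_{f}\cong{}V_{p}(A)\otimes_{\Q_{p}}L$ has no non-zero Galois fixed vectors), I would conclude that $\exsel^{1}(K_{\chi},\ppql)^{\chi}$ has no $\varpi_{\mathrm{wt}}$-torsion. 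As a finitely generated module over the DVR $\I_{\mathfrak{p}_{f}}$, it is thus free, of some rank $b\geq{}0$.

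Next I would invoke \neko's global cup-product duality $(\ref{eq:ptcom})$ for $q=1$. By the Galois-equivariance recalled in Section $\ref{galconj}$ together with the invertibility of $2$ in $\I_{\mathfrak{p}_{f}}$, it descends to a perfect pairing on $\chi$-parts after tensoring with $\mathrm{Frac}(\I_{\mathfrak{p}_{f}})$; hence
\[
\mathrm{rank}_{\I_{\mathfrak{p}_{f}}}\exsel^{2}(K_{\chi},\ppql)^{\chi}=\mathrm{rank}_{\I_{\mathfrak{p}_{f}}}\exsel^{1}(K_{\chi},\ppql)^{\chi}=b.
\]
The structure theorem for finitely generated modules over $\I_{\mathfrak{p}_{f}}$ then produces non-negative integers $e_{j}$, vanishing for $j\gg0$, with
\[
\exsel^{2}(K_{\chi},\ppql)^{\chi}\cong{}\I_{\mathfrak{p}_{f}}^{b}\oplus\bigoplus_{j\geq{}1}\lri{\I_{\mathfrak{p}_{f}}/\varpi_{\mathrm{wt}}^{j}\I_{\mathfrak{p}_{f}}}^{e_{j}}.
\]

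To finish, I would apply the control sequence $(\ref{eq:speco})$ with $q=1$ on $\chi$-parts and read off $L$-dimensions:
\[
\dim_{L}\exsel^{1}(K_{\chi},V_{f})^{\chi}=b+\sum_{j\geq{}1}e_{j},
\]
while $\mathrm{length}_{\mathfrak{p}_{f}}\bigl(\exsel^{2}(K_{\chi},\mathbb{T}_{\mathbf{f}})^{\chi}\bigr)$ equals $\sum_{j\geq{}1}j\cdot{}e_{j}$ when $b=0$, and is infinite otherwise. In either case the length dominates the $L$-dimension, yielding the inequality; equality forces both $b=0$ (so that $\exsel^{2}(K_{\chi},\ppql)^{\chi}$ is torsion) and $e_{j}=0$ for all $j\geq{}2$ (so that it is semi-simple at $\mathfrak{p}_{f}$), as desired. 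The main obstacle is to verify that \neko's global cup-product duality continues to induce a perfect pairing after both localising at $\mathfrak{p}_{f}$ and projecting to $\chi$-eigenspaces; once this is in place, the remainder is a direct computation via the structure theorem for modules over a DVR.
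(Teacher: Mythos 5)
Your proof is correct and follows essentially the same route as the paper: the control sequence $(\ref{eq:speco})$, the rank equality from \neko's duality $(\ref{eq:ptcom})$, and the structure theorem over the discrete valuation ring $\Il$, with the only cosmetic difference that you track a possibly nonzero free rank $b$ and split into cases at the end, whereas the paper reduces at the outset to the case where $\exsel^{2}(K_{\chi},\ppql)^{\chi}$ is torsion (so that $b=0$ and $\exsel^{1}(K_{\chi},\ppql)^{\chi}=0$). The $\chi$-equivariance of the localised pairing that you flag as a potential obstacle is already supplied by the discussion in Section $\ref{galconj}$, together with the fact that $2$ is invertible in $\Il$ (since $p\neq2$), so there is no gap to fill there.
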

\begin{proof} Write for simplicity $\varpi:=\varpi_{\mathrm{wt}}$, $M_{\ast}:=\exsel^{\ast}(K_{\chi},\ppql)^{\chi}$, and $\mathscr{M}_{\ast}:=
\exsel^{\ast}(K_{\chi},V_{f})^{\chi}$,
so that there are short exact sequences of $L$-modules $(\ref{eq:speco})$:
$0\fre{}M_{q}/\varpi\fre{}\mathscr{M}_{q}\fre{}M_{q+1}[\varpi]\fre{}0.$ 
%Since $\mathrm{length}_{\mathfrak{p}_{f}}\Il=\infty$,
We can assume that $M_{2}$ is a torsion $\Il$-module, hence $M_{1}=0$
by the duality isomorphism $(\ref{eq:ptcom})$
(cf. the preceding proof). Then  
$\mathscr{M}_{1}\cong{}M_{2}[\varpi]$ and
\begin{equation}\label{eq:gqr1}
         \dim_{L} \mathscr{M}_{1}=\dim_{L} M_{2}[\varpi].
\end{equation}
The structure theorem for finite, torsion modules over principal ideal domains yields an isomorphism 
\[
                        M_{2}=\bigoplus_{j=1}^{\infty}\lri{\Il/\varpi^{j}}^{m(j)},
\]
where  $m : \mathbf{N}\fre{}\mathbf{N}$ is a function such that $m(j)=0$ for $j\gg0$.  Since 
$\lri{\Il/\varpi^{j}}[\varpi]\cong{}\Il/\varpi$ for $j\geq{}1$:
\[
             \mathrm{length}_{\mathfrak{p}_{f}} M_{2}=\sum_{j=0}^{\infty}m(j)\cdot{}j
             =\sum_{j=1}^{\infty}m(j)+\sum_{j=2}^{\infty}m(j)\cdot{}(j-1)
             =\dim_{L} M_{2}[\varpi]+\sum_{j=2}^{\infty}m(j)\cdot{}(j-1).
\]
Together with $(\ref{eq:gqr1})$, this gives $\mathrm{length}_{\mathfrak{p}_{f}} M_{2}\geq{}\dim_{L} \mathscr{M}_{1}$,
with equality if and only if $m(j)=0$ for every $j\geq{}2$, i.e. if and only if $M_{2}$ is a semi-simple $\Il$-module. 
\end{proof}

\begin{lemma}\label{lalaf} Assume that $\exsel^{2}(K_{\chi},\ppql)^{\chi}$ is a torsion, semi-simple $\Il$-module.
Then  $X_{\mathrm{Gr}}^{\mathrm{cc}}(\mathbf{f}/K_{\chi})^{\chi}\otimes_{\I}\Il$
is  a torsion, semi-simple $\Il$-module, and 
\[
      \mathrm{length}_{\mathfrak{p}_{f}}\Big( X_{\mathrm{Gr}}^{\mathrm{cc}}(\mathbf{f}/K_{\chi})^{\chi}\Big)=
\dim_{\Q_{p}} \Big(\mathrm{Sel}_{p}(A/K_{\chi})^{\chi}\Big).
\]
\end{lemma}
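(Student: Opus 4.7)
The plan is to relate $X_{\mathrm{Gr}}^{\mathrm{cc}}(\mathbf{f}/K_{\chi})^{\chi}_{\mathfrak{p}_{f}}$ to $\exsel^{2}(K_{\chi},\ppql)^{\chi}$ via \neko's Pontrjagin duality, and then to compute its length via the control theorem $(\ref{eq:speco})$, the semi-simplicity hypothesis, and the classical exact sequence $(\ref{eq:exselclassic})$ for $V_{f}$.

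First I would apply the \neko{} exact sequence $(\ref{eq:exselmer})$ to $X=\mathbb{A}_{\mathbf{f}}$, using that $\mathbb{A}_{\mathbf{f},w}^{-}=0$ for $w\nmid p$ and that, under the self-duality $\mathbb{A}_{\mathbf{f}}\cong\ppq\otimes_{\I}\I^{\ast}$ induced by $\pi$, the strict Greenberg Selmer group agrees with $\mathfrak{S}(K_{\chi},\mathbb{A}_{\mathbf{f}})$. Dualising and invoking the Galois-equivariant identification $\exsel^{1}(K_{\chi},\mathbb{A}_{\mathbf{f}})^{\ast}\cong\exsel^{2}(K_{\chi},\ppq)$ from $(\ref{eq:nekdu1})$, and then taking $\chi$-parts (exact since $p\neq 2$) and localising at $\mathfrak{p}_{f}$ would yield a four-term exact sequence
\[
0\to X_{\mathrm{Gr}}^{\mathrm{cc}}(\mathbf{f}/K_{\chi})^{\chi}_{\mathfrak{p}_{f}}\to\exsel^{2}(K_{\chi},\ppql)^{\chi}\to\lri{\bigoplus_{v|p}H^{0}(K_{\chi,v},\mathbb{A}_{\mathbf{f},v}^{-})}^{\ast,\chi}_{\mathfrak{p}_{f}}\to(\mathbb{A}_{\mathbf{f}}^{G_{K_{\chi},S}})^{\ast,\chi}_{\mathfrak{p}_{f}}.
\]
The rightmost term vanishes by Nakayama applied to the finitely generated $\I$-module $(\mathbb{A}_{\mathbf{f}}^{G_{K_{\chi},S}})^{\ast}$: its reduction modulo $\mathfrak{p}_{f}$ is $(A_{f}^{G_{K_{\chi},S}})^{\ast}\otimes_{\mathcal{O}_{L}}L$, which vanishes because Hypothesis $\ref{h2}$ makes $V_{f}\cong V_{p}(A)\otimes L$ absolutely irreducible over $G_{K_{\chi},S}$, forcing $A_{f}^{G_{K_{\chi},S}}$ to be finite. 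For the other flanking term, $(\ref{eq:addequation})$ gives $\mathbb{A}_{\mathbf{f},v}^{-}\cong\I^{\ast}(\xi)$ with $\xi:=\mathbf{a}_{p}^{\ast}\cdot[\kappa_{\mathrm{cy}}]^{-1/2}$, and since $p$ splits in $K_{\chi}$ each $K_{\chi,v}=\Q_{p}$. The ideal $J\subset\I$ generated by $\{\xi(g)-1:g\in G_{\Q_{p}}\}$ lies in $\mathfrak{p}_{f}$ (because $\phi_{f}(\xi)=a_{p}=1$) and contains $[\gamma_{\mathrm{wt}}^{-1/2}]-1$ coming from an inertia generator with $\kappa_{\mathrm{cy}}$-image $\gamma_{\mathrm{wt}}$; as $p\neq 2$ this element is associate to $\varpi_{\mathrm{wt}}$ in $\Il$. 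Hence $J\cdot\Il=\mathfrak{p}_{f}\Il$ and $H^{0}(\Q_{p},\mathbb{A}_{\mathbf{f},v}^{-})^{\ast}_{\mathfrak{p}_{f}}\cong L$; summing over $v\mid p$ and taking $\chi$-parts leaves an $L$-space of dimension exactly one, because $\mathrm{Gal}(K_{\chi}/\Q)$ permutes the two primes above $p$ when $K_{\chi}\neq\Q$. The four-term sequence therefore simplifies to
\[
0\to X_{\mathrm{Gr}}^{\mathrm{cc}}(\mathbf{f}/K_{\chi})^{\chi}\otimes_{\I}\Il\to\exsel^{2}(K_{\chi},\ppql)^{\chi}\to L\to 0.
\]

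Finally I would compute $\exsel^{2}(K_{\chi},\ppql)^{\chi}$. The same Nakayama argument applied to $\exsel^{0}(K_{\chi},\mathbb{A}_{\mathbf{f}})^{\ast}$, combined with the Pontrjagin duality $\exsel^{3}(K_{\chi},\ppq)\cong\exsel^{0}(K_{\chi},\mathbb{A}_{\mathbf{f}})^{\ast}$, forces $\exsel^{3}(K_{\chi},\ppql)=0$, so the control sequence $(\ref{eq:speco})$ for $q=2$ degenerates to $\exsel^{2}(K_{\chi},\ppql)/\varpi_{\mathrm{wt}}\cong\exsel^{2}(K_{\chi},V_{f})$. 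Self-dual \neko{} Pontrjagin duality via $\pi_{f}$ (arising from $(\ref{eq:ptcom})$) gives $\exsel^{2}(K_{\chi},V_{f})\cong\exsel^{1}(K_{\chi},V_{f})^{\ast}$ as $L[\mathrm{Gal}(K_{\chi}/\Q)]$-modules, while $(\ref{eq:exselclassic})$ together with $\chi(p)=1$ yields $\dim_{L}\exsel^{1}(K_{\chi},V_{f})^{\chi}=1+\dim_{\Q_{p}}\mathrm{Sel}_{p}(A/K_{\chi})^{\chi}$. Under the semi-simplicity hypothesis, $\exsel^{2}(K_{\chi},\ppql)^{\chi}\cong L^{s}$ with $s=1+\dim_{\Q_{p}}\mathrm{Sel}_{p}(A/K_{\chi})^{\chi}$, and the preceding short exact sequence then forces $X_{\mathrm{Gr}}^{\mathrm{cc}}(\mathbf{f}/K_{\chi})^{\chi}\otimes_{\I}\Il\cong L^{s-1}$, which is semi-simple over $\Il$ of length exactly $\dim_{\Q_{p}}\mathrm{Sel}_{p}(A/K_{\chi})^{\chi}$, as desired. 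The main obstacle will be the explicit identification $J\cdot\Il=\mathfrak{p}_{f}\Il$ that produces $H^{0}(\Q_{p},\mathbb{A}_{\mathbf{f},v}^{-})^{\ast}_{\mathfrak{p}_{f}}\cong L$: this single extra $L$-dimension separating $\exsel^{2}(K_{\chi},\ppql)^{\chi}$ from $X_{\mathrm{Gr}}^{\mathrm{cc}}(\mathbf{f}/K_{\chi})^{\chi}\otimes_{\I}\Il$ is the algebraic counterpart of the exceptional zero of the Mazur--Kitagawa $p$-adic $L$-function at $(k,s)=(2,1)$, and keeping track of it through \neko's formalism is the essential technical point.
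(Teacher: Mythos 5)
Your overall architecture coincides with the paper's: dualise the exact sequence $(\ref{eq:exselmer})$ for $X=\mathbb{A}_{\mathbf{f}}$ (identified with $\ppq\otimes_{\I}\I^{\ast}$ via $\mathrm{adj}(\pi)$), localise at $\mathfrak{p}_{f}$, kill the term coming from $H^{0}(G_{K_{\chi},S},\mathbb{A}_{\mathbf{f}})$, identify the local flanking term with $\Il/\mathfrak{p}_{f}\Il$, and pin down $\exsel^{2}(K_{\chi},\ppql)^{\chi}$ via the control sequence $(\ref{eq:speco})$, self-duality and $(\ref{eq:exselclassic})$. Where you differ from the paper is in the two auxiliary steps, and the differences are worth recording.

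For the local term, your computation is a genuine and appealingly shorter alternative to Lemma~$\ref{sublemma}$: rather than passing through Tate local duality to $H^{2}(\Q_{p},\ppq^{+}\otimes_{\I}\Il)$ and then arguing via the local Euler characteristic, the structure theorem, and the explicit non-vanishing of a cup-product cocycle $\mathcal H$, you compute $H^{0}(\Q_{p},\mathbb{A}_{\mathbf{f},v}^{-})^{\ast}$ directly as $\I/J$ with $J=\big(\xi(g)-1 : g\in G_{\Q_{p}}\big)$, $\xi=\mathbf{a}_{p}^{\ast}\cdot[\kappa_{\mathrm{cy}}]^{-1/2}$, and note that $J\Il=\mathfrak{p}_{f}\Il$ because $\phi_{f}\circ\xi\equiv 1$ and the inertia contribution $[\gamma_{\mathrm{wt}}^{-1/2}]-1$ is an associate of $\varpi_{\mathrm{wt}}$ when $p\neq 2$. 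That is exactly the piece of information that, in the paper's proof, is hidden inside the inertia part of $\Phi_{\mathbf{f}}$; you expose it directly and avoid both the local duality step and the cocycle manipulation. What your approach \emph{loses} is the explicit identification of $\mathcal H$ with $\log_{q_{A}}$ up to the $\mathscr{L}$-invariant, but that refinement is not needed for Lemma~$\ref{lalaf}$.

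For the vanishing of $(\mathbb{A}_{\mathbf{f}}^{G_{K_{\chi},S}})^{\ast}_{\mathfrak{p}_{f}}$ (and of $\exsel^{3}(K_{\chi},\ppql)$), there is a small gap in your justification. You invoke Nakayama and assert that Hypothesis~$\ref{h2}$ makes $V_{f}$ absolutely irreducible \emph{over} $G_{K_{\chi},S}$; but $\ref{h2}$ is an irreducibility statement over $G_{\Q}$, and an irreducible two-dimensional representation can very well become reducible upon restriction to an index-two subgroup (the dihedral case). The assertion \emph{is} true here, but it uses the extra standing hypothesis that the conductor of $\chi$ is coprime to $Np$: if $\overline{\rho}_{\mathbf{f}}$ were induced from $G_{K_{\chi}}$ it would have to ramify at the primes dividing $D_{\chi}$, contradicting that $\overline{\rho}_{\mathbf{f}}$ is unramified outside $Np$. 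The paper sidesteps this issue entirely by picking a prime $w\in S$ with $w\nmid p$ and using the acyclicity of $\dercts(K_{\chi,w},\ppql)$ (the $G_{K_{\chi},S}$-invariants of $\mathbb{A}_{\mathbf{f}}$ inject into the $w$-local invariants, whose localised Pontrjagin dual vanishes); this is cleaner because it never needs any irreducibility of the restriction, and also lets the paper avoid discussing $\exsel^{3}$ at all, since Lemma~$\ref{lala}$ only uses the injection $\exsel^{2}(K_{\chi},\ppql)/\varpi\hookrightarrow\exsel^{2}(K_{\chi},V_{f})$ rather than the full identification you derive from $\exsel^{3}=0$. Your argument is repairable by supplying the missing induction-from-$K_{\chi}$ argument, or more economically by replacing the two Nakayama steps with the paper's acyclicity-at-$w\nmid p$ device.

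Your final length calculation is in substance the same as the paper's (which cites Lemma~$\ref{lala}$): semi-simplicity of $\exsel^{2}(K_{\chi},\ppql)^{\chi}$ reduces its length to a single $L$-dimension, which you compute from $\exsel^{2}(K_{\chi},V_{f})^{\chi}$ via self-duality $(\ref{eq:ptcom})$ and $(\ref{eq:exselclassic})$; the paper computes the same $L$-dimension from $\exsel^{1}(K_{\chi},V_{f})^{\chi}$ via the $\varpi$-torsion rather than the $\varpi$-cotorsion. Both are fine.
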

\begin{proof} Since $\mathrm{adj}(\pi) : \mathbb{T}_{\mathbf{f}}\cong{}\Hom{\I}(\mathbb{T}_{\mathbf{f}},\I(1))$
and $\ppq$ is a free $\I$-module,
there is  a canonical isomorphism of $\I[G_{K_{\chi},S}]$-modules
$$\mathbb{T}_{\mathbf{f}}\otimes_{\I}\I^{\ast}\cong{}\Hom{\I}(\ppq,\I(1))
\otimes_{\I}\Hom{\mathrm{cont}}(\I,\divp)\cong{}\Hom{\mathrm{cont}}(\ppq,\mu_{p^{\infty}})=:\mathbb{A}_{\mathbf{f}},$$
the second isomorphism being defined by composition: $\psi\otimes\mu\mapsto{}\mu\circ{}\psi$.  Similarly,
the isomorphism of $\I[G_{\Q_{p}}]$-modules 
$\mathrm{adj}(\pi) : \ppq^{-}\cong{}\Hom{\I}(\ppq^{+},\I(1))$
gives  an isomorphism 
of $\I[G_{\Q_p}]$-modules
$\mathbb{T}_{\mathbf{f}}^-\otimes_{\I}\I^{\ast}\cong{}\mathbb{A}_{\mathbf{f}}^-$. (Recall here that $\mathbb{A}_{\mathbf{f}}$
and $\mathbb{A}_{\mathbf{f}}^-$ are the Kummer duals of $\ppq$ and $\ppq^+$ respectively.)
This implies that $\mathrm{Sel}_{\mathrm{Gr}}^{\mathrm{cc}}(\mathbf{f}/K_{\chi})=\mathfrak{S}\lri{K_{\chi},\mathbb{A}_{\mathbf{f}}}$.
(Note that $\mathbb{A}_{\mathbf{f},w}^{-}:=0$ for every $S\ni{}w\nmid{}p$, so that we impose no condition at $w\nmid{}p$
in both the definitions of $\mathrm{Sel}_{\mathrm{Gr}}^{\mathrm{cc}}(\mathbf{f}/K_{\chi})$ and $\mathfrak{S}(K_{\chi},\mathbb{A}_{\mathbf{f}})$.)
By $(\ref{eq:exselmer})$ one then obtains an exact sequence
\begin{equation}\label{eq:rio1}
                 H^{0}(G_{K_{\chi},S},\mathbb{A}_{\mathbf{f}})\fre{}\bigoplus_{v|p}H^{0}(K_{\chi,v},\mathbb{A}_{\mathbf{f},v}^{-})
                 \fre{}\exsel^{1}(K_{\chi},\mathbb{A}_{\mathbf{f}})\fre{}\mathrm{Sel}_{\mathrm{Gr}}^{\mathrm{cc}}(\mathbf{f}/K_{\chi})\fre{}0.
\end{equation}
We claim that the localisation at $\mathfrak{p}_{f}$ of the Pontrjagin dual of $H^{0}(G_{K_{\chi},S},\mathbb{A}_{\mathbf{f}})$ vanishes, i.e.
\begin{equation}\label{eq:rio2}
                    H^{0}(G_{K_{\chi},S},\mathbb{A}_{\mathbf{f}})^{\ast}_{\mathfrak{p}_{f}}:=
                    \Hom{\Z_{p}}\Big(H^{0}(G_{K_{\chi},S},\mathbb{A}_{\mathbf{f}}),\divp\Big)\otimes_{\I}\Il=0.
\end{equation}
Indeed, let $w$ be a prime of $K_{\chi}$. By Tate local duality, $H^{0}(K_{\chi,w},\mathbb{A}_{\mathbf{f}})$
is the Pontrjagin dual of $H^{2}(K_{\chi,w},\mathbb{T}_{\mathbf{f}})$,
so that the inclusion $H^{0}(G_{K_{\chi},S},\mathbb{A}_{\mathbf{f}})\subset{}H^{0}(K_{\chi,w},\mathbb{A}_{\mathbf{f}})$
induces a  surjection $H^{2}(K_{\chi,w},\ppql)\twoheadrightarrow{}H^{0}(G_{K_{\chi},S},\mathbb{A}_{\mathbf{f}})^{\ast}_{\mathfrak{p}_{f}}$ on
(localised) Pontrjagin duals. As $\dercts(K_{\chi,w},\ppql)\cong{}0\in{}\mathrm{D}(\Il)$ is acyclic for every 
prime $w\nmid{}p$ (as easily proved, cf. \cite[Proposition 12.7.13.3(i)]{Ne}),  the claim $(\ref{eq:rio2})$ follows.  Since $\exsel^{1}(K_{\chi},\mathbb{A}_{\mathbf{f}})$ is the Pontrjagin dual of $\exsel^{2}(K_{\chi},\ppq)$
by \neko's duality isomorphism $(\ref{eq:nekdu1})$, applying first $\Hom{\Z_{p}}(-,\divp)$ and then $-\otimes_{\I}\Il$
to $(\ref{eq:rio1})$, and using $(\ref{eq:rio2})$, yield a short exact sequence of $\Il$-modules
\begin{equation}\label{eq:rio3}
          0\fre{}X_{\mathrm{Gr}}^{\mathrm{cc}}(\mathbf{f}/K_{\chi})\otimes_{\I}\Il\fre{}\exsel^{2}(K_{\chi},\ppql)\fre{}\bigoplus_{v|p}
          H^{2}(K_{\chi,v},\mathbb{T}_{\mathbf{f},v}^{+}\otimes_{\I}\Il)\fre{}0,
\end{equation}
where we used once again local Tate duality to rewrite the Pontrjagin dual of $H^{0}(K_{\chi,v},\mathbb{A}_{\mathbf{f},v}^{-})$
as $H^{2}(K_{\chi,v},\mathbb{T}_{\mathbf{f},v}^{+})$.
Lemma $\ref{sublemma}$ below gives an  isomorphism of $\Il$-modules
\[
                 H^{2}(K_{\chi,v},\mathbb{T}_{\mathbf{f},v}^{+}\otimes_{\I}\Il)\cong{}H^{2}(\Q_{p},\mathbb{T}_{\mathbf{f}}^{+}\otimes
                 _{\I}\Il)\cong{}\Il/\mathfrak{p}_{f}\Il,
\]
for every $v|p$. Since $p$ splits in (the at most quadratic field) $K_{\chi}$,
taking the $\chi$-component of $(\ref{eq:rio3})$ gives a short exact sequence of $\Il$-modules
\[
                    0\fre{}X_{\mathrm{Gr}}^{\mathrm{cc}}(\mathbf{f}/K_{\chi})^{\chi}\otimes_{\I}\Il
                    \fre{}\exsel^{2}(K_{\chi},\ppql)^{\chi}\fre{}\Il/\mathfrak{p}_{f}\Il\fre{}0.
\]
(Note that, if  $\chi$ is nontrivial, the nontrivial automorphism of $\mathrm{Gal}(K_{\chi}/\Q)$
acts by permuting the factors in the sum
$H^{2}(K_{\chi,v_{1}},\mathbb{T}_{\mathbf{f},v_{1}}^{+}\otimes_{\I}\Il)\oplus{}H^{2}(K_{\chi,v_{2}},\mathbb{T}_{\mathbf{f},v_{2}}^{+}\otimes_{\I}\Il)
=:V\oplus{}V$, where $\{v|p\}=\{v_{1},v_{2}\}$. Then the  $\epsilon$-component of $V\oplus{}V$ 
 is equal to either the subspace $\{(v,v) : v\in{}V\}\cong{}V$ if $\epsilon=1$ or to 
$\{(v,-v) : v\in{}V\}\cong{}V$ if $\epsilon=\chi$.)
In particular,  $X_{\mathrm{Gr}}^{\mathrm{cc}}(\mathbf{f}/K_{\chi})^{\chi}$ is a torsion module, 
which is semi-simple at $\mathfrak{p}_{f}$
if $\exsel^{2}(K_{\chi},\ppql)^{\chi}$ is. Moreover, if $\exsel^{2}(K_{\chi},\ppql)^{\chi}$
is indeed semi-simple, the preceding equation and  Lemma $\ref{lala}$ give
\[
              \mathrm{length}_{\mathfrak{p}_{f}}\Big(X_{\mathrm{Gr}}^{\mathrm{cc}}(\mathbf{f}/K_{\chi})^{\chi}\Big)=
              \mathrm{length}_{\mathfrak{p}_{f}}\Big(\exsel^{2}(K_{\chi},\ppq)^{\chi}\Big)-1=
              \dim_{L}\Big(\exsel^{1}(K_{\chi},V_{f})^{\chi}\Big)-1.
\]
Since 
  $\dim_{L}\exsel^{1}(K_{\chi},V_{f})^{\chi}=\dim_{\Q_{p}}\mathrm{Sel}_{p}(A/K_{\chi})^{\chi}+1$
by $(\ref{eq:exselclassic})$, this concludes the proof of the lemma.
\end{proof}

\begin{lemma}\label{sublemma} $H^{2}(\Q_{p},\mathbb{T}_{\mathbf{f}}^{+}\otimes_{\I}\Il)\cong{}\Il/\mathfrak{p}_{f}\Il$.
\end{lemma}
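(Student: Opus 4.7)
Let $M := \mathbb{T}_{\mathbf{f}}^+ \otimes_\I \Il$. The plan is to first identify $\mathbb{T}_\mathbf{f}^+$ explicitly as a rank-one character module over $\I$, then invoke local Tate duality in \neko's framework (\cite[Chs.~5--6]{Ne}) to reduce $H^2(\Q_p, M)$ to the cokernel of an explicit ideal $I_\alpha \subset \I$, and finally verify that $I_\alpha \Il = \mathfrak{p}_f \Il$ using the specific arithmetic of split multiplicative reduction at $p$.

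First, combining equation $(\ref{eq:addequation})$ with the definition $\ppq^+ = T_\mathbf{f}^+ \otimes_\I [\chi_{\mathrm{cy}}]^{-1/2}$ and the equality $[\kappa_{\mathrm{cy}}] = [\chi_{\mathrm{cy}}]$ on $G_{\Q_p}$ yields an isomorphism of $\I[G_{\Q_p}]$-modules $\mathbb{T}_\mathbf{f}^+ \cong \I(\eta)$ with
$$\eta \;:=\; \mathbf{a}_p^{\ast-1}\,\chi_{\mathrm{cy}}\,[\chi_{\mathrm{cy}}]^{1/2}.$$
Local Tate duality over the coefficient ring $\I$ (which is complete local Noetherian with finite residue field, so \neko's machinery applies cleanly) then gives
$$H^2(\Q_p, \I(\eta)) \;\cong\; \I/I_\alpha,\qquad I_\alpha := \big(\alpha(g) - 1 : g \in G_{\Q_p}\big)\,\I,$$
with $\alpha := \chi_{\mathrm{cy}}\eta^{-1} = \mathbf{a}_p^\ast\,[\chi_{\mathrm{cy}}]^{-1/2}$. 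Concretely, the Kummer dual identifies as $\I(\eta)^\vee \cong \I^\vee(\alpha)$, so that $H^0(\Q_p, \I^\vee(\alpha)) = \I^\vee[I_\alpha] = (\I/I_\alpha)^\vee$, and Matlis reflexivity for finitely generated $\I$-modules yields the displayed isomorphism. Flatness of $\Il$ over $\I$ then gives $H^2(\Q_p, M) \cong \Il/I_\alpha\Il$.

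It remains to check that $I_\alpha\Il = \mathfrak{p}_f\Il$. For the containment $\supseteq$: restricted to inertia, the unramified factor $\mathbf{a}_p^\ast$ is trivial, so $\alpha|_{I_{\Q_p}} = [\chi_{\mathrm{cy}}]^{-1/2}|_{I_{\Q_p}}$. For $p$ odd, $\chi_{\mathrm{cy}}(I_{\Q_p}) = \Z_p^\times$ projects surjectively onto $\Gamma$, so the image of $[\chi_{\mathrm{cy}}]^{-1/2}$ on inertia covers $[\Gamma] \subseteq \Il^\times$; the ideal generated by $\{[\gamma] - 1 : \gamma \in \Gamma\}$ equals $([\gamma_{\mathrm{wt}}] - 1)\Il = \varpi_{\mathrm{wt}}\Il = \mathfrak{p}_f\Il$ by $(\ref{eq:unif})$. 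For the reverse containment $\subseteq$: modulo $\mathfrak{p}_f\Il$, the character $[\chi_{\mathrm{cy}}]^{-1/2}$ becomes trivial (since $\phi_f([\gamma]) = 1$ for every $\gamma \in \Gamma$), while $\mathbf{a}_p^\ast$ becomes the unramified character sending $\mathrm{Frob}_p$ to $a_p(A) = +1$ --- this last equality is precisely where the \emph{split} multiplicative reduction of $A/\Q$ at $p$ enters. Hence $\alpha(g) \equiv 1 \pmod{\mathfrak{p}_f\Il}$ for every $g \in G_{\Q_p}$, concluding the identification.

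The main obstacle I anticipate is the clean deployment of local Tate duality over the non-standard coefficient ring $\I$; I would import this directly from \neko's Chapter 5. As a back-up route, one can argue via the specialization triangle $(\ref{eq:controlder})$ applied to local cohomology: this yields $H^2(\Q_p, M)/\varpi_{\mathrm{wt}} \cong H^2(\Q_p, V_f^+) \cong L$, so $H^2(\Q_p, M)$ is a cyclic $\Il$-module by Nakayama; a rank comparison via the local Euler characteristic formula together with the direct computation $H^0(\Q_p, M) = 0$ (using, for instance, the value $\eta(g) = \omega$ for an inertia element $g$ with $\chi_{\mathrm{cy}}(g) = \omega$ a nontrivial Teichm\"uller unit) then pins down the length of this cyclic module as $1$.
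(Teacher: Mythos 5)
Your main argument, via abstract local Tate duality, is correct and takes a genuinely different (and shorter) path than the paper's. You identify $\mathbb{T}_{\mathbf{f}}^{+}\cong\I(\eta)$ with $\eta=\mathbf{a}_{p}^{\ast-1}\chi_{\mathrm{cy}}[\chi_{\mathrm{cy}}]^{1/2}$, invoke the duality $H^{2}(\Q_{p},\I(\eta))\cong\big(H^{0}(\Q_{p},\I^{\ast}(\alpha))\big)^{\vee}\cong\I/I_{\alpha}$ with $\alpha=\chi_{\mathrm{cy}}\eta^{-1}=\mathbf{a}_{p}^{\ast}[\chi_{\mathrm{cy}}]^{-1/2}$, and then compute $I_{\alpha}\Il$ directly. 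This is sound: on inertia $\alpha$ sweeps out $[\Gamma]$, so $\varpi_{\mathrm{wt}}\I\subseteq I_{\alpha}$; a Frobenius lift adds $(\mathbf{a}_{p}-1)$, giving $I_{\alpha}=\varpi_{\mathrm{wt}}\I+(\mathbf{a}_{p}-1)\I$, and since $\phi_{f}(\mathbf{a}_{p})=a_{p}(A)=1$ (split multiplicative) we get $I_{\alpha}\Il=\varpi_{\mathrm{wt}}\Il=\mathfrak{p}_{f}\Il$. The paper instead proceeds via the structure theorem over the DVR $\Il$, reduces to showing $H^{2}\cong\Il/\varpi^{n}$ is semi-simple ($n=1$), and then proves non-triviality of an explicit map $\mathcal{H}$ by a cocycle calculation through local class field theory, the Greenberg--Stevens formula and the $\mathcal{L}$-invariant. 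Your approach isolates the essential content (the ideal $I_{\alpha}$ already equals the prime after localization), while the paper's explicit route produces the extra information $\mathcal{H}(q)\stackrel{\cdot}{=}\log_{q_{A}}(q)$, a formula of independent interest for the exceptional-zero circle of ideas in the paper.

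The back-up route at the end, however, has a genuine gap. Showing $H^{0}(\Q_{p},M)=0$ and applying the local Euler characteristic only pins down \emph{ranks} over $\Il$: one gets $\mathrm{rank}\,H^{1}=1$ and that $H^{2}$ is $\Il$-torsion. Combined with $H^{2}/\varpi_{\mathrm{wt}}\cong L$ and Nakayama, this gives $H^{2}\cong\Il/\varpi_{\mathrm{wt}}^{n}$ for some $n\geq 1$, but \emph{nothing} in the Euler characteristic argument distinguishes $n=1$ from $n>1$ (torsion modules contribute $0$ to the rank regardless of length). This is exactly the obstruction the paper confronts head-on, and why it must do the non-trivial cup-product/L-invariant computation to force semi-simplicity. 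Your main route circumvents this cleanly, but the back-up route as written would reprove only what the paper establishes in its first paragraph and not the lemma itself.
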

\begin{proof} Write $\varpi:=\varpi_{\mathrm{wt}}$.
Since $\mathbb{T}_{\mathbf{f}}^{+}\otimes_{\I}\Il/\varpi\cong{}V_{f}^{+}\cong{}L(1)$ as $G_{\Q_{p}}$-modules 
(see Section $\ref{compselclassicsec}$), there are  short exact sequences of $L$-modules
\begin{equation}\label{eq:676754}
        0\fre{}H^{j}(\Q_{p},\mathbb{T}_{\mathbf{f}}^{+}\otimes_{\I}\Il)/\varpi\fre{}H^{j}(\Q_{p},
        \Q_{p}(1))\otimes_{\Q_{p}}L
        \fre{}H^{j+1}(\Q_{p},\mathbb{T}_{\mathbf{f}}^{+}\otimes_{\I}\Il)[\varpi]\fre{}0.
\end{equation}
Taking $j=0$ one  finds $H^{1}(\Q_{p},\mathbb{T}_{\mathbf{f}}^{+}\otimes_{\I}\Il)[\varpi]=0$,
i.e. $H^{1}(\Q_{p},\mathbb{T}_{\mathbf{f}}^{+}\otimes_{\I}\Il)$ is a free $\Il$-module.  
It is immediately seen by the explicit description of $\mathbb{T}_{\mathbf{f}}^{\pm}$ given in $(\ref{eq:addequation})$
that $H^{0}(\Q_{p},\mathbb{T}_{\mathbf{f}}^{+})=0$ and $H^{0}(\Q_{p},\mathbb{T}_{\mathbf{f}}^{-})=0$.
Since $\mathbb{T}_{\mathbf{f}}^{-}\cong{}\Hom{\I}(\mathbb{T}_{\mathbf{f}}^{+},\I(1))$ (under the duality $\pi$ from Section $\ref{ppq}$),
Tate local duality tells us that $H^{2}(\Q_{p},\mathbb{T}_{\mathbf{f}}^{+})$ is a torsion $\I$-module. Since $\mathbb{T}_{\mathbf{f}}^{+}$
is free of rank one over $\I$, 
Tate's formula for the local Euler characteristic now gives 
$\sum_{k=0}^{2}(-1)^{k}\mathrm{rank}_{\I}H^{j}(\Q_{p},\mathbb{T}_{\mathbf{f}}^{+})
=-1$. Together with what already proved, this allows us to conclude $H^{1}(\Q_{p},\mathbb{T}_{\mathbf{f}}^{+}\otimes_{\I}\Il)\cong{}\Il$.
Taking now $j=1$ and $j=2$ in $(\ref{eq:676754})$ we find exact sequences
\begin{align*}
                   0\fre{}\Il/\varpi\fre{}H^{1}(\Q_{p},\Q_{p}(1))\otimes_{\Q_{p}}L\fre{}
                   H^{2}(\Q_{p},\mathbb{T}^{+}_{\mathbf{f}} &\otimes_{\I}\Il)[\varpi]\fre{}0;   \\
                   H^{2}(\Q_{p},\mathbb{T}^{+}_{\mathbf{f}}\otimes_{\I}\Il)/\varpi\cong{}
                   H^{2}(\Q_{p},\Q_{p}(1))&\otimes_{\Q_{p}}L. 
\end{align*}
Since $\mathrm{dim}_{\Q_{p}}H^{1}(\Q_{p},\Q_{p}(1))=2$ and 
$\dim_{\Q_{p}}H^{2}(\Q_{p},\Q_{p}(1))=1$, and since $\Il/\varpi\cong{}L$,
it follows that both the $\varpi$-torsion  $H^{2}(\Q_{p},\mathbb{T}_{\mathbf{f}}^{+}\otimes_{\I}\Il)[\varpi]$
and the $\varpi$-cotorsion $H^{2}(\Q_{p},\mathbb{T}^{+}_{\mathbf{f}}\otimes_{\I}\Il)/\varpi$ have dimension $1$
over $L=\Il/\varpi$. The structure theorem for finite torsion modules over principal ideal domains then gives 
$H^{2}(\Q_{p},\mathbb{T}_{\mathbf{f}}^{+}\otimes_{\I}\Il)\cong{}\Il/\varpi^{n}$ for some $n\geq{}1$.
To conclude the proof, it remains to show  that $n=1$, 
i.e. that $H^{2}(\Q_{p},\mathbb{T}_{\mathbf{f}}^{+}\otimes_{\I}\Il)$ is semi-simple, or equivalently that the composition  
\[
         \mathcal{H} : 
         H^{1}(\Q_{p},L(1))\twoheadrightarrow{}H^{2}(\Q_{p},\mathbb{T}_{\mathbf{f}}^{+}\otimes_{\I}\Il)[\varpi]\hookrightarrow{}
          H^{2}(\Q_{p},\mathbb{T}_{\mathbf{f}}^{+}\otimes_{\I}\Il)\twoheadrightarrow{}
          H^{2}(\Q_{p},\mathbb{T}_{\mathbf{f}}^{+}\otimes_{\I}\Il)/\varpi
          \cong{}H^{2}(\Q_{p},L(1))\stackrel{\mathrm{inv}_{p}\ }{\cong{}}L
\]
is non-zero. To do this, identify $H^{1}(\Q_{p},L(1))\cong{}\Q_{p}^{\times}\widehat{\otimes}L$
via Kummer theory, and let $q\in{}\Q_{p}^{\times}$. We want to compute the image $\mathcal{H}(q)=\mathcal{H}(q\widehat{\otimes}1)
\in{}L$.
Identify $\mathbb{T}_{\mathbf{f}}^{+}$ with $\I(\mathbf{a}_{p}^{\ast-1}\chi_{\mathrm{cy}}
[\chi_{\mathrm{cy}}]^{1/2})$
(cf. Section $\ref{ppq}$), and  write $c_{q} : G_{\Q_{p}}\fre{}L(1)$ for a $1$-cocycle representing 
$q\widehat{\otimes}1$.
Since $\Il$ is a $L$-algebra and $\phi_{f} : \Il\twoheadrightarrow{}L$
is a morphism of $L$-algebras, one can consider  $c_{q} : G_{\Q_{p}}\fre{}\mathbb{T}_{\mathbf{f}}^{+}\otimes_{\I}\Il$
as $1$-cochain  which lifts $c_{q}$
under $\phi_{f}$. The differential (in $\ctsb(\Q_{p},\mathbb{T}_{\mathbf{f}}^{+}\otimes_{\I}\Il)$)
of $c_{q}$ is then given by
\begin{align*}
    dc_{q}(g,h) & =\mathbf{a}_{p}^{\ast}(g)^{-1}\cdot{}\chi_{\mathrm{cy}}(g)\cdot{}[\chi_{\mathrm{cy}}(g)]^{1/2}\cdot{}c_{q}(h)
    -c_{q}(gh)+c_{q}(g) \\
    &= \chi_{\mathrm{cy}}(g)\cdot{}\lri{\mathbf{a}_{p}^{\ast}(g)^{-1}\cdot{}[\chi_{\mathrm{cy}}(g)]^{1/2}-1}\cdot{}c_{q}(h),
\end{align*}
where we used the cocyle relation (in $\ctsb(\Q_{p},L(1))$) for the second equality.
Retracing the definitions given above, the class $\mathcal{H}(q)$ is then the image
under $\mathrm{inv}_{p}$ of the class represented by the $2$-cocycle
\begin{equation}\label{eq:nbnbnb@}
            \vartheta(g,h):=\chi_{\mathrm{cy}}(g)\cdot{}c_{q}(h)
            \cdot{}\phi_{f}\lri{\frac{\mathbf{a}_{p}^{\ast}(g)^{-1}\cdot{}[\chi_{\mathrm{cy}}]^{1/2}(g)-1}{\varpi}}\in{}L(1).
\end{equation}
Consider the Tate local cup-product pairing $\dia{-,-}^{\mathrm{Tate}}_{\Q_{p}}
: H^{1}(\Q_{p},L)\times{}H^{1}(\Q_{p},L(1))\fre{}L$. Noting that $$\Phi_{\mathbf{f}}:=\phi_{f}\lri{\frac{\mathbf{a}_{p}^{\ast-1}\cdot{}[\chi_{\mathrm{cy}}]^{1/2}-1}{\varpi}}
\in{}\Hom{\mathrm{cont}}(G_{\Q_{p}}^{\mathrm{ab}},L)=H^{1}(\Q_{p},L),$$
the equality $(\ref{eq:nbnbnb@})$ can be rewritten as 
\begin{equation}\label{eq:funtate}
                \mathcal{H}(q)=\mathrm{inv}_{p}\big(\mathrm{class\ of\ }\vartheta\big)=\dia{\Phi_{\mathbf{f}},q}^{\mathrm{Tate}}_{\Q_{p}}\in{}L.
\end{equation}
Let $g_{0}\in{}I_{\Q_{p}}$ be such that $\chi_{\mathrm{cy}}(g_{0})^{1/2}=\gamma_{\mathrm{wt}}$
(where $\varpi=[\gamma_{\mathrm{wt}}]-1$), %Then
%$\phi_{f}\lri{\frac{\mathbf{a}_{p}^{\ast}(g_{0})^{-1}\cdot{}[\chi_{\mathrm{cy}}(g_{0})^{1/2}]-1}{\varpi}}=1
%=\frac{\log_{p}(\gamma_{\mathrm{wt}})}{\log_{p}(\gamma_{\mathrm{wt}})}$.
and let $g\in{}I_{\Q_{p}}$. Then $\kappa_{\mathrm{cy}}(g)^{1/2}=\gamma_{\mathrm{wt}}^{z}$
for some $z\in{}\Z_{p}$, satisfying $\frac{1}{2}\log_{p}\lri{\chi_{\mathrm{cy}}(g)}=z\cdot{}\log_{p}(\gamma_{\mathrm{wt}})$. (Recall that $\kappa_{\mathrm{cy}} : G_{\Q_{p}}\twoheadrightarrow{}1+p\Z_{p}$
is the composition of the $p$-adic cyclotomic character $\chi_{\mathrm{cy}} : G_{\Q_{p}}\twoheadrightarrow{}\Z_{p}^{\times}$ with projection to principal units.)
%so that $g|_{\Q_{p,\infty}}=g_{0}|_{\Q_{p,\infty}}^{z}$, where $\Q_{p,\infty}/\Q_{p}$
%is the cyclotomic $\Z_{p}$-extension of $\Q_{p}$.
Since $\mathbf{a}_{p}^{\ast}(g)=1$ %, and $[\chi_{\mathrm{cy}}(g_{0})^{1/2}]-1=\varpi$, 
this implies
\begin{equation}\label{eq:nbnbnb+}
                 \Phi_{\mathbf{f}}(g)= \phi_{f}\lri{\frac{\mathbf{a}_{p}^{\ast}(g)^{-1}\cdot{}
                 [\chi_{\mathrm{cy}}]^{1/2}(g)-1}{\varpi}}
                  =\phi_{f}\lri{\frac{[\gamma_{\mathrm{wt}}^{z}]-1}{\varpi}}=z
                  =\frac{1}{2}\frac{\log_{p}\big(\chi_{\mathrm{cy}}(g)\big)}{\log_{p}(\gamma_{\mathrm{wt}})}.
\end{equation}
Let now $\mathrm{Frob}_{p}\in{}\mathrm{Gal}(\Q_{p}^{\mathrm{un}}/\Q_{p})=:G_{\Q_{p}}^{\mathrm{un}}$
be an arithmetic Frobenius,
where $\Q_{p}^{\mathrm{un}}/\Q_{p}$ is the maximal unramified extension of $\Q_{p}$,
and  $G_{\Q_{p}}^{\mathrm{un}}$ is viewed  as a subgroup of  the abelianisation  $G_{\Q_{p}}^{\mathrm{ab}}$
of $G_{\Q_{p}}$ under the canonical 
 decomposition $G_{\Q_{p}}^{\mathrm{ab}}\cong{}\mathrm{Gal}(\Q_{p}(\mu_{p^{\infty}})/\Q_{p})
\times{}G_{\Q_{p}}^{\mathrm{un}}$. 
Using the Mellin transform introduced in Section $\ref{lochidafam}$, and the well-known formula of Greenberg-Stevens
\cite{G-S}:
$\frac{d}{dk}a_{p}(k)_{k=2}=-\frac{1}{2}\mathscr{L}_{p}(A)$, where $\mathscr{L}_{p}(A):=\frac{\log_{p}(q_{A})}{\mathrm{ord}_{p}(q_{A})}$
for the Tate period $q_{A}\in{}p\Z_{p}$ of $A/\Q_{p}$ (see the following section),
one easily computes 
\begin{equation}\label{eq:nbnbnb=}
            \Phi_{\mathbf{f}}(\mathrm{Frob}_{p}^{n})=\phi_{f}\lri{\frac{\mathbf{a}_{p}^{\ast}(\mathrm{Frob}_{p}^{n})^{-1}-1}{\varpi}}
            =\frac{1}{2}\mathscr{L}_{p}(A)\cdot{}\frac{n}{\log_{p}(\gamma_{\mathrm{wt}})}.
\end{equation}
Let $\mathrm{rec}_{p} : \Q_{p}^{\times}\fre{}G_{\Q_{p}}^{\mathrm{ab}}$
be the reciprocity map of local class field theory \cite{Ser}. By combining 
the explicit formula for $\mathrm{rec}_{p}$ given by Lubin-Tate theory with  formulae $(\ref{eq:nbnbnb+})$ and $(\ref{eq:nbnbnb=})$ above
yields
\[
               \Phi_{\mathbf{f}}\big(\mathrm{rec}_{p}(q)\big)= \phi_{f}\lri{\frac{\mathbf{a}_{p}^{\ast}(\mathrm{rec}_{p}(q))^{-1}\cdot{}[\chi_{\mathrm{cy}}]^{1/2}(\mathrm{rec}_{p}(q))-1}{\varpi}}
                  =-\frac{1}{2}\frac{1}{\log_{p}(\gamma_{\mathrm{wt}})}\cdot{}\log_{q_{A}}(q)
\]
for every $q\in{}\Q_{p}^{\times}$,
where $\log_{q_{A}} : \Q_{p}^{\times}\fre{}\Q_{p}$ is the branch of the $p$-adic logarithm vanishing at the Tate period 
$q_{A}$. Equation $(\ref{eq:funtate})$ and another application of local class field theory
then give (cf. \cite{Ser})
\[
             \mathcal{H}(q)=\dia{\Phi_{\mathbf{f}},q}^{\mathrm{Tate}}_{\Q_{p}}=
             \Phi_{\mathbf{f}}\big(\mathrm{rec}_{p}(q)\big)\stackrel{\cdot{}}{=}\log_{q_{A}}(q),
\]
where $\stackrel{\cdot{}}{=}$ denotes equality up to a non-zero factor. This clearly proves that $\mathcal{H}$ is non-zero,
hence (as explained above)
that $H^{2}(\Q_{p},\mathbb{T}_{\mathbf{f}}^{+}\otimes_{\I}\Il)$
is a semi-simple $\Il$-module.
This concludes the proof of the lemma.
\end{proof}

\subsection{Algebraic exceptional zero formulae}\label{exalg}
Since $A/\Q_{p}$ has split multiplicative reduction, it is a \emph{Tate curve} \cite{Tate-2}, \cite[Chapter V]{Sil-2}, i.e. isomorphic 
(as a rigid analytic variety) to a Tate curve $\mathbb{G}_{m}/q_{A}^{\Z}$ over $\Q_{p}$,
where $q_{A}\in{}p\Z_{p}$ is the so called Tate period of $A/\Q_{p}$.
In particular, there exists a $G_{\Q_{p}}$-equivariant isomorphism 
\begin{equation}\label{eq:tatepar}
     \Phi_{\mathrm{Tate}} : \overline{\Q}_{p}^{\times}/q_{A}^{\Z}\cong{}A(\overline{\Q}_{p}).
\end{equation}
Write $K_{\chi,p}:=K_{\chi}\otimes_{\Q}\Q_{p}\cong{}\prod_{v|p}K_{\chi,v}$, and write 
$\iota_{v} : K_{\chi}\hookrightarrow{}K_{\chi,v}\subset{}\overline{\Q}_{p}$ for the resulting embedding of $K_{\chi}$ in its completion at $v$.
Following \cite{M-T-T} and \cite{Be-Da1}, define the \emph{extended Mordell-Weil group} of $A/K_{\chi}$:
\[
                A^{\dag}(K_{\chi}):=\left\{\lri{P,(y_{v})_{v|p}}\in{}A(K_{\chi})\times{}K_{\chi,p}^{\times} : \Phi_{\mathrm{Tate}}(y_{v})=\iota_{v}(P),\ \mathrm{for\ 
                every\ } v|p\right\}.
\]
In concrete terms, an element of $A^{\dag}(K_{\chi})$ is a $K_{\chi}$-rational point os $A$, together with a distinguished lift under 
$\Phi_{\mathrm{Tate}}$ for every prime $v|p$. Then $A^{\dag}(K_{\chi})$ is an extension of the usual Mordell-Weil group $A(K_{\chi})$
by a free $\Z$-module of rank $\#\{v|p\}$. In other words there is  a short exact sequence
\begin{equation}\label{eq:extmor}
          0\fre{}\bigoplus_{v|p}\Z\fre{}A^{\dag}(K_{\chi})\fre{}A(K_{\chi})\fre{}0,
\end{equation}
where the first map sends the canonical $v$-generator  to 
\begin{equation}\label{eq:nbnbnew}
    q_{v}:=\big(0,q^{v}_{A}\big)\in{}A^{\dag}(K_{\chi}),
\end{equation}     $q_{A}^{v}\in{}K_{\chi,p}^{\times}$
being the element having $q_{A}$ as $v$-component and $1$ elsewhere. 
When $K_{\chi}/\Q$ is quadratic, $A^{\dag}(K_{\chi})$ has a natural $\mathrm{Gal}(K_{\chi}/\Q)$-action,
coming from the diagonal action on $A(K_{\chi})\times{}K_{\chi,p}^{\times}$
(with $\mathrm{Gal}(K_{\chi}/\Q)$
acting on $K_{\chi,p}:=K_{\chi}\otimes_{\Q}\Q_{p}$ via its action on the first component).
Recall the Kummer map $A(K_{\chi})\widehat{\otimes}\Q_{p}\hookrightarrow{}\mathrm{Sel}_{p}(A/K_{\chi})$ \cite[Chapter X]{Sil-1}.
The following lemma is proved in   \cite[Section 4]{PhD}
(see  in particular Lemma 4.1 and Lemma  4.3).
For every abelian group $\mathcal{A}$,
 write for simplicity  $\mathcal{A}\otimes{}L:=\lri{\mathcal{A}\widehat{\otimes}\Z_{p}}\otimes_{\Z_{p}}L$.
%where $\mathcal{A}\widehat{\otimes}\Z_{p}:=\inlim{}_{n}\mathcal{A}/p^{n}\mathcal{A}$ is the $p$-adic completion of $\mathcal{A}$.

\begin{lemma}\label{lelap} There exists a unique injective and $\mathrm{Gal}(K_{\chi}/\Q)$-equivariant  morphism of $L$-modules
\[
            i_{A}^{\dag} : A^{\dag}(K_{\chi})\otimes{}L\longrightarrow{}\exsel^{1}(K_{\chi},V_{f}),
\]
satisfying the following properties:
\begin{itemize}
\item[$(i)$] $i_{A}^{\dag}$ gives rise to an injective morphism of short exact sequences of $L[\mathrm{Gal}(K_{\chi}/\Q)]$-modules:
\[
           \xymatrix{         0 \ar[r] & \bigoplus_{v|p}L \ar[r]\ar@{=}[d] & A^{\dag}(K_{\chi})\otimes{}L \ar[r]\ar@{^{(}->}[d]_{i_{A}^{\dag}} &
                                            A(K_{\chi})\otimes{}L \ar@{^{(}->}[d]^{\mathrm{Kummer}}\ar[r]  & 0\ \\
                                             0 \ar[r]   & \bigoplus_{v|p} L \ar[r] & \exsel^{1}(K_{\chi},V_{f}) \ar[r] & \mathrm{Sel}_{p}(A/K_{\chi})\otimes_{\Q_{p}}L
                                             \ar[r] & 0,                                           
                                                      }
\]
the bottom row being $(\ref{eq:exselclassic})$. 
\item[$(ii)$] Let $\mathbb{P}=(P,(y_{v})_{v|p})\in{}A^{\dag}(K_{\chi})$ be such that $y_{v}\in{}\mathcal{O}_{K_{\chi},v}^{\times}$ for every $v|p$.
Then the image of $i_{A}^{\dag}(\mathbb{P})$ under the natural map 
$\exsel^{1}(K_{\chi},V_{f})\fre{}\bigoplus_{v|p}H^{1}(K_{\chi,v},V_{f,v}^{+})$ lies in the finite subspace
$\bigoplus_{v|p}H^{1}_{f}(K_{\chi,v},V_{f,v}^{+})$ \footnote{More precisely,
by the definition of \neko's Selmer complexes, we have a natural surjective morphism of complexes $p_{f}^{+} : \derco(K_{\chi},V_{f})\twoheadrightarrow{}\bigoplus_{v|p}
\dercts(K_{\chi,v},V_{f,v}^{+})$. The map referred to in the lemma is the morphism induced in cohomology by $p_{f}^{+}$.
Moreover, we recall that the \emph{finite (of Bloch-Kato) subspace} $ H^{1}_{f}(K_{\chi,v},-)$
is defined to be the subspace of $H^{1}(K_{\chi,v},-)$ made of crystalline classes,
i.e. classes with trivial image in  $H^{1}(K_{\chi,v},-\otimes{}B_{\mathrm{cris}})$ \cite{B-K}.}. 
\end{itemize}
In particular, $i_{A}^{\dag} : A^{\dag}(K_{\chi})\otimes{}L\cong{}\exsel^{1}(K_{\chi},V_{f})$
is an isomorphism provided that $\sha(A/K_{\chi})_{p^{\infty}}$ is finite. 
\end{lemma}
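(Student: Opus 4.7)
\smallskip

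The plan is to construct $i_A^\dag$ at the level of \neko's cochain complex $\scob(K_\chi, V_f)$, recalling that a class in $\exsel^1(K_\chi,V_f)$ is represented by a triple $(a,(b_v^+)_{v|p},(g_v)_{v|p})$ where $a\in Z^1(G_{K_\chi,S},V_f)$, $b_v^+\in Z^1(K_{\chi,v},V_{f,v}^+)$, and $g_v\in V_{f,v}^-$ satisfies $\mathrm{res}_v(a)-i_v^+(b_v^+)=dg_v$. The starting ingredient is the identification $V_{f,v}^+\cong L(1)$, $V_{f,v}^-\cong L$ from Section \ref{compselclassicsec}; the extension class of $0\to V_{f,v}^+\to V_f\to V_{f,v}^-\to 0$ in $H^1(K_{\chi,v},L(1))\cong K_{\chi,v}^\times\widehat{\otimes}L$ is precisely the Tate period $q_A$, by Tate's parametrisation $(\ref{eq:tatepar})$. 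Thus Kummer theory applied to $K_{\chi,v}^\times$ provides exactly the local data needed to lift global Kummer classes.

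Given $\mathbb{P}=(P,(y_v)_{v|p})\in A^\dag(K_\chi)$ with $P\in A(K_\chi)$, let $a=\kappa_{\mathrm{glob}}(P)\in H^1(G_{K_\chi,S},V_f)$ be its global Kummer image, and let $b_v^+:=\kappa_v(y_v)\in H^1(K_{\chi,v},V_{f,v}^+)$ be the local Kummer class of $y_v\in K_{\chi,v}^\times$. The compatibility $\mathrm{res}_v(a)\equiv i_v^+(b_v^+)$ in $H^1(K_{\chi,v},V_f)$ follows from the defining relation $\Phi_{\mathrm{Tate}}(y_v)=\iota_v(P)$, which identifies the image of $b_v^+$ under $H^1(K_{\chi,v},L(1))\to H^1(K_{\chi,v},V_f)$ with $\mathrm{res}_v(a)$; choosing cochain-level representatives and a suitable $g_v$ realising this equality on cocycles defines the class $i_A^\dag(\mathbb{P})\in \exsel^1(K_\chi,V_f)$. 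The construction is manifestly $\mathrm{Gal}(K_\chi/\Q)$-equivariant since Kummer theory and Tate's parametrisation are, and since the diagonal $\mathrm{Gal}(K_\chi/\Q)$-action on $A(K_\chi)\times K_{\chi,p}^\times$ matches the action on $V_f$.

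To verify the commutative diagram in (i), note that under the quotient $A^\dag(K_\chi)\twoheadrightarrow A(K_\chi)$, the triple $(a,(b_v^+),(g_v))$ maps to the global Kummer class $a$; this lands in $\mathrm{Sel}_p(A/K_\chi)\otimes L$ by the very definition of the Bloch--Kato Selmer group and the fact that Kummer classes are crystalline. On the kernel $\bigoplus_{v|p}\Z$, the canonical generator $q_v$ from $(\ref{eq:nbnbnew})$ has $P=0$, so $a=0$, and the class reduces (via the $(\ref{eq:exselmer})$ boundary map) to $(\kappa_{v}(q_A^v))_{v|p}\in\bigoplus_{v|p}H^0(K_{\chi,v},V_{f,v}^-)=\bigoplus_{v|p}L$, which is a nonzero scalar multiple of the canonical basis, yielding the identification $\bigoplus_{v|p}L=\bigoplus_{v|p}L$. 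Injectivity of $i_A^\dag$ follows by the Snake Lemma once one knows the Kummer map $A(K_\chi)\otimes L\hookrightarrow \mathrm{Sel}_p(A/K_\chi)\otimes L$ is injective, which is classical. Property (ii) is immediate: if $y_v\in\mathcal{O}_{K_{\chi,v}}^\times$, then $b_v^+=\kappa_v(y_v)$ lies in the unramified part $H^1_{\mathrm{ur}}(K_{\chi,v},L(1))=H^1_f(K_{\chi,v},V_{f,v}^+)$. The final assertion, that $i_A^\dag$ is an isomorphism under finiteness of $\sha(A/K_\chi)_{p^\infty}$, follows by a dimension count against the exact sequence $(\ref{eq:kummera})$.

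The main obstacle will be the cochain-level bookkeeping in the second paragraph: one must choose representatives for $a$ and $b_v^+$ and produce $g_v$ explicitly so that the triple satisfies the Selmer complex compatibility, and verify independence of all choices modulo coboundaries (so that the resulting class depends only on $\mathbb{P}$). The Galois-equivariance and the diagram commutativity then reduce to functoriality of Kummer theory and of Tate's uniformisation, but checking these at the level of $\scob$ (rather than just on cohomology) is the technically delicate step that occupies \cite[Section 4]{PhD}.
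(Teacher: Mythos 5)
The paper does not prove this lemma in the text; it simply cites Lemmas 4.1 and 4.3 of \cite{PhD}. Your construction --- lifting the global Kummer class of $P$ and the local Kummer classes of the $y_v$ to a cocycle of \neko's Selmer complex, using the Tate parametrisation to identify $V_{f,v}^+$ with $L(1)$ (with extension class $q_A$) and match the two compatibly --- is the expected one and is almost certainly what the cited thesis lemmas carry out. The cochain-level bookkeeping you flag as the delicate step is in fact not a real obstruction here: $H^0(K_{\chi,w},V_f)=0$ for every $w\in S$ (for $w|p$ because cup product with the class of $q_A\ne 0$ makes $H^0(K_{\chi,v},L)\to H^1(K_{\chi,v},L(1))$ injective; for $w\nmid p$ the paper records it in Section~\ref{compselclassicsec}), so the elements $g_w$ realising the local compatibility are uniquely determined once cocycle representatives for $a$ and the $b_v^+$ are fixed, and the resulting class in $\exsel^{1}(K_{\chi},V_{f})$ depends only on $\mathbb{P}$.

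Two further remarks. First, you do not address the \emph{uniqueness} asserted in the lemma. Given (i), two candidate maps differ by a morphism $\varphi:A(K_{\chi})\otimes L\to\bigoplus_{v|p}L$; property (ii) then forces $\varphi=0$, because the composite $\bigoplus_{v|p}L\to\exsel^{1}(K_{\chi},V_{f})\to\bigoplus_{v|p}H^1(K_{\chi,v},V_{f,v}^+)$ sends the $v$-th canonical generator to a nonzero multiple of the Kummer class of $q_A$ in $H^1(K_{\chi,v},L(1))$, which lies outside $H^1_f$ precisely because $q_A$ is not a unit --- the very same fact underlying your check of (ii). This should be made explicit. Second, a small slip: $H^1_{\mathrm{ur}}(K_{\chi,v},L(1))$ is zero, not the Bloch--Kato finite subspace; what you actually need (and what is true) is that $H^1_f(K_{\chi,v},L(1))=\mathcal{O}_{K_{\chi},v}^{\times}\widehat{\otimes}L$ is the image of the units under Kummer, so $\kappa_v(y_v)\in H^1_f$ whenever $y_v$ is a unit.
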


We will consider from now on $A^{\dag}(K_{\chi})$ (or precisely $A^{\dag}(K_{\chi})/\mathrm{torsion}$)
as a submodule of $\exsel^{1}(K_{\chi},V_{f})$ via the injection $i_{A}^{\dag}$. In particular 
$\dia{P,Q}^{\mathrm{Nek}}_{V_{f},\pi}:=\big<i_{A}^{\dag}(P),i_{A}^{\dag}(Q)\big>^{\mathrm{Nek}}_{V_{f},\pi}$
for every $P,Q\in{}A^{\dag}(K_{\chi})$.

For every $\alpha\in{}\Z_{p}$,
let $\boldsymbol{\alpha}=(\alpha^{1/p},\alpha^{1/p^{2}},\dots)$ be a (fixed) compatible system of 
$p^{n}$-th roots of $\alpha$ in $\overline{\Q}_{p}$.
Using the Tate parametrisation (and recalling that $q_{A}\in{}p\Z_{p}$ has positive $p$-adic valuation),
we can identify $V_{p}(A)$ with the $\Q_{p}$-module generated by $\mathbf{1}\in{}\Z_{p}(1)$ and 
$\boldsymbol{q}_{A}$.
%We also use the element $\mathbf{1}\in{}\inlim{}_{n\geq{}1}\mu_{p^{n}}$
%to identify $\Q_{p}(1)\cong{}\Q_{p}$  (as $\Q_{p}$-modules).
Thanks to our fixed isomorphism $(\ref{eq:isoV_{p}})$, the duality $\pi_{f}:=\pi\otimes_{\Il,\phi_{f}}L$
induces a  duality $\pi_{f} : V_{p}(A)\otimes_{\Q_{p}}V_{p}(A)\fre{}\Q_{p}(1)$.
Denote by $\pi_{f,\mathbf{1}} : V_{p}(A)\otimes_{\Q_{p}}V_{p}(A)\fre{}\Q_{p}$ the composition 
of $\pi_{f}$ with the isomorphism $\Q_{p}(1)\cong{}\Q_{p};\ \mathbf{1}\mapsto{}1$.
We can then state the main result of this section.

\begin{theo}\label{mainPhD} Let $\big(P,\widetilde{P}\big)\in{}A^{\dag}(K_{\chi})$, with $\widetilde{P}=\big(\widetilde{P}_{v}\big)_{v|p}\in{}K_{\chi,p}^{\times}$.
Then
\[
        \dia{q_{v},\big(P,\widetilde{P})}^{\mathrm{Nek}}_{V_{f},\pi}=c(\pi)\cdot{}
        \log_{q_{A}}\lri{N_{K_{\chi,v}/\Q_{p}}\big(\widetilde{P}_{v}\big)},
\]
where $\log_{q_{A}} : \overline{\Q}_{p}^{\times}\fre{}\overline{\Q}_{p}$ 
is the branch of the $p$-adic logarithm vanishing at $q_{A}$, $N_{K_{\chi,v}/\Q_{p}} : K_{\chi,v}^{\times}\fre{}\Q_{p}^{\times}$
is the  norm, and  the \emph{non-zero} constant $c(\pi)\in{}\Q_{p}^{\times}$ (depending on $\pi$,
but \emph{not} on $(P,\widetilde{P})$)
 is given by
$c(\pi)=\frac{1}{2}\pi_{f,\mathbf{1}}\lri{\mathbf{1}\otimes{}\boldsymbol{q_{A}}}$.
\end{theo}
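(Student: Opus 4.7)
The plan is to unwind the definition of $\langle-,-\rangle^{\mathrm{Nek}}_{V_{f},\pi}$ and reduce the computation to an explicit local cup-product at the prime $v$, in a manner parallel to (but dual to) the computation already carried out in Lemma~\ref{sublemma}. By Lemma~\ref{lelap}, $i_{A}^{\dag}(q_{v}) \in \exsel^{1}(K_{\chi},V_{f})$ is the class attached to the trivial point with distinguished lift $q_{A}^{v}\in K_{\chi,p}^{\times}$; concretely, via Tate's uniformisation, $i_{A}^{\dag}(q_{v})$ is represented by a $1$-cocycle $c_{v}:G_{K_{\chi},S}\to V_{f}$ which is ramified only at $v$ and whose local restriction to $G_{K_{\chi,v}}$ coincides with the Kummer class of $q_{A}$ (viewed through the Tate parametrisation as a class in $H^{1}(K_{\chi,v},V_{f,v}^{+})$ mapping to the line $L\cdot\boldsymbol{q_{A}}$ of $V_{f}$).

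First I would produce an explicit lift of $i_{A}^{\dag}(q_{v})$ to a class in $\exsel^{2}(K_{\chi},\ppql)[\varpi_{\mathrm{wt}}]$ under the map $i_{\mathrm{wt}}$ of $(\ref{eq:speco})$. This is a Bockstein procedure: one takes any set-theoretic lift of $c_{v}$ to a $1$-cochain with values in $\mathbb{T}_{\mathbf{f},\mathfrak{p}_{f}}$ (seen as a deformation of $V_{f}$ in the weight direction) and computes its coboundary. Exactly as in the derivation of $(\ref{eq:nbnbnb@})$, the resulting $2$-cocycle is controlled by the deformation of the Galois action along the critical character, i.e.\ by the function $\Phi_{\mathbf{f}}=\phi_{f}\big(\tfrac{\mathbf{a}_{p}^{\ast-1}[\chi_{\mathrm{cy}}]^{1/2}-1}{\varpi_{\mathrm{wt}}}\big)\in H^{1}(\Q_{p},L)$ already computed there.

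Next I would plug this lift into the Cassels--Tate pairing $\cup^{\mathrm{CT}}_{\pi}$. By Nekov\'a\v{r}'s construction (cf.\ \cite[\S10]{Ne}), when one of the two arguments comes via the Bockstein from an element that is unramified outside $v$ and locally trivial away from $v$, $\cup^{\mathrm{CT}}_{\pi}$ collapses to a single local Tate cup-product at $v$ between $\Phi_{\mathbf{f}}|_{G_{K_{\chi,v}}}$ and the image of $(P,\widetilde{P})$ under the projection $p_{v}^{-}\colon\exsel^{1}(K_{\chi},V_{f})\to H^{1}(K_{\chi,v},V_{f,v}^{-})$. By part $(ii)$ of Lemma~\ref{lelap} and the Tate parametrisation, the latter class is exactly the Kummer class of $\widetilde{P}_{v}\in K_{\chi,v}^{\times}$ in $H^{1}(K_{\chi,v},L)\cong K_{\chi,v}^{\times}\widehat{\otimes}L$ (via $V_{f,v}^{-}\cong L$).

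Finally, applying the identity $\Phi_{\mathbf{f}}\circ\mathrm{rec}_{K_{\chi,v}}(x)\stackrel{\cdot}{=}-\tfrac{1}{2\log_{p}(\gamma_{\mathrm{wt}})}\log_{q_{A}}(N_{K_{\chi,v}/\Q_{p}}(x))$, proved exactly as in (\ref{eq:nbnbnb+})--(\ref{eq:nbnbnb=}) (using the Greenberg--Stevens formula $\tfrac{d}{dk}a_{p}(k)_{k=2}=-\tfrac{1}{2}\mathscr{L}_{p}(A)$ and Lubin--Tate local class field theory, noting that $N_{K_{\chi,v}/\Q_{p}}$ appears via the functoriality of $\mathrm{rec}$ for the base change to $K_{\chi,v}$), gives the stated formula; the factor $\ell_{\mathrm{wt}}=\log_{p}(\gamma_{\mathrm{wt}})$ built into $\langle-,-\rangle^{\mathrm{Nek}}_{V_{f},\pi}$ cancels the $\log_{p}(\gamma_{\mathrm{wt}})$ in $\Phi_{\mathbf{f}}$, and the identification of the remaining constant as $\tfrac{1}{2}\pi_{f,\mathbf{1}}(\mathbf{1}\otimes\boldsymbol{q_{A}})$ is a normalization check against the chosen trivialisation $V_{p}(A)=L\cdot\mathbf{1}\oplus L\cdot\boldsymbol{q_{A}}$ and the self-duality $\pi_{f}$. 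The main obstacle is the second step: rigorously matching Nekov\'a\v{r}'s abstract Cassels--Tate pairing with the explicit local Tate pairing by $\Phi_{\mathbf{f}}$, which is exactly the content that requires the Selmer-complex formalism of \cite{Ne} and the local computation already performed for $H^{2}(\Q_{p},\mathbb{T}_{\mathbf{f}}^{+}\otimes_{\I}\Il)$ in Lemma~\ref{sublemma}.
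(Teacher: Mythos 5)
The paper does not prove this theorem internally: the proof given is the one-line citation ``This is Corollary 4.6 of \cite{PhD}'' together with a remark on the normalisation of $\pi$, so there is no in-paper argument against which to match your sketch. That said, the outline you propose---a Bockstein lift of $i_{A}^{\dag}(q_{v})$ governed by the deformation class $\Phi_{\mathbf{f}}$, a reduction of $\cup^{\mathrm{CT}}_{\pi}$ to a single local Tate cup-product at $v$, and the explicit evaluation via Greenberg--Stevens and Lubin--Tate theory---is the right circle of ideas, and it is structurally parallel to the computation in Lemma~\ref{sublemma}, as you observe.

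One point needs correcting. You project $(P,\widetilde{P})$ via ``$p_{v}^{-}\colon\exsel^{1}(K_{\chi},V_{f})\to H^{1}(K_{\chi,v},V_{f,v}^{-})$'' and identify the image with the Kummer class of $\widetilde{P}_{v}$ in $H^{1}(K_{\chi,v},L)$. This is off on both counts: by the very definition of the Greenberg local condition in the Selmer complex, the composite $\exsel^{1}(K_{\chi},V_{f})\to H^{1}(K_{\chi,v},V_{f})\to H^{1}(K_{\chi,v},V_{f,v}^{-})$ vanishes (the local restriction of a class in $\exsel^{1}$ is, by construction, cohomologous to one coming from $V_{f,v}^{+}$), and the Kummer class of an element of $K_{\chi,v}^{\times}$ lives naturally in $H^{1}(K_{\chi,v},L(1))\cong H^{1}(K_{\chi,v},V_{f,v}^{+})$, not in $H^{1}(K_{\chi,v},L)$. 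The map you want is the one $p_{f}^{+}\colon\derco(K_{\chi},V_{f})\twoheadrightarrow{}\bigoplus_{v|p}\dercts(K_{\chi,v},V_{f,v}^{+})$ recorded in the footnote to Lemma~\ref{lelap}: the Kummer class of $\widetilde{P}_{v}$ is the image of $(P,\widetilde{P})$ under $p_{f}^{+}$ in $H^{1}(K_{\chi,v},V_{f,v}^{+})$, while $\Phi_{\mathbf{f}}$ lives on the dual side $H^{1}(K_{\chi,v},V_{f,v}^{-})=H^{1}(K_{\chi,v},L)$; the two are then paired through the perfect local duality $V_{f,v}^{+}\otimes V_{f,v}^{-}\to L(1)$ and $\mathrm{inv}_{p}$, which is exactly the shape of $(\ref{eq:funtate})$. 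Beyond that relabelling, the step you flag as ``the main obstacle''---rigorously collapsing Nekov\'a\v{r}'s global Cassels--Tate pairing to this single local cup-product at $v$, including showing the contributions at $\overline{v}$ and away from $p$ disappear---is indeed the substantive content; it is precisely what \cite[Cor.~4.6]{PhD} establishes and cannot be inferred from the purely local Lemma~\ref{sublemma} alone.
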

\begin{proof} This is Corollary $4.6$ of \cite{PhD}. (In \emph{loc. cit.}  $\pi : \ppq\otimes_{\I}\ppq\fre{}\I(1)$ is normalised 
in such a way that $\pi_{f,\mathbf{1}}$ takes the value $1$ on $\mathbf{1}\otimes{}\boldsymbol{q_{A}}$,
so that the constant $c(\pi)$ becomes $1/2$.) For a more general statement, see also \cite{Ven}. 
\end{proof}

\subsection{Proof of Theorem $\ref{bchar}$} Assume that $\chi(p)=1$, i.e. that  $p$ splits in $K_{\chi}$. Moreover, assume that
\begin{equation}\label{eq:nuhyp}
                    \rank_{\Z}A(K_{\chi})^{\chi}=1;\ \ \#\Big(\sha(A/K_{\chi})_{p^{\infty}}^{\chi}\Big)<\infty,
\end{equation}
and let $P_{\chi}\in{}A(K_{\chi})^{\chi}$ be a generator of $A(K_{\chi})^{\chi}$ modulo torsion. 
Fix a lift $P^{\dag}_{\chi}=\big(P_{\chi},(\widetilde{P}_{\chi,v})_{v|p}\big)\in{}A^{\dag}(K_{\chi})^{\chi}$ of $P_{\chi}$
under $(\ref{eq:extmor})$, and define a \emph{$\chi$-period}
\[
                q_{\chi}\in{}A^{\dag}(K_{\chi})^{\chi}
\]
as follows. If $\chi$ is the trivial character, i.e. $K_{\chi}=\Q$, then let $q_{\chi}:=(0,q_{A})\in{}A^{\dag}(\Q)
\subset{}A(\Q)\times{}\Q_{p}^{\times}$.
Similarly, if $K_{\chi}/\Q$ is quadratic, let 
$q_{\chi}:=\big(0,(q_{A},q_{A}^{-1})\big)\in{}A^{\dag}(K_{\chi})^{\chi}\subset{}A(K_{\chi})\times{}K_{\chi,\mathfrak{p}}^{\times}\times{}K_{\chi,\overline{\mathfrak{p}}}^{\times}$, where $p\mathcal{O}_{K_{\chi}}=\mathfrak{p}\cdot{}\overline{\mathfrak{p}}$.
By the exact sequence of $\Z[\mathrm{Gal}(K_{\chi}/\Q)]$-modules  
$(\ref{eq:extmor})$, our assumptions, and Lemma $\ref{lelap}$ one has
\begin{equation}\label{eq:qfin}
                \exsel^{1}(K_{\chi},V_{f})^{\chi}\stackrel{i_{A}^{\dag}}{\cong}\lri{A(K_{\chi})\otimes{}L}^{\chi}=L\cdot{}q_{\chi}\oplus{}L\cdot{}P^{\dag}_{\chi}.
\end{equation}
Since $\dia{-,-}^{\mathrm{Nek}}_{V_{f},\pi}$ is a skew-symmetric bilinear form,  $\dia{q_{\chi},q_{\chi}}_{V_{f},\pi}^{\mathrm{Nek}}=0$
and $\dia{P_{\chi}^{\dag},P_{\chi}^{\dag}}^{\mathrm{Nek}}_{V_{f},\pi}=0$.
Moreover, in case $K_{\chi}=\Q$, Theorem $\ref{mainPhD}$ gives
\[
             \dia{q_{\chi},P_{\chi}^{\dag}}^{\mathrm{Nek}}_{V_{f},\pi}\stackrel{\cdot{}}{=}
             \log_{q_{A}}(\widetilde{P}_{\chi,p})=\log_{A}(P_{\chi}),
\]
where $\log_{A}:=\log_{q_{A}}\circ{}\Phi_{\mathrm{Tate}}^{-1} : A(\Q_{p})\cong{}\Q_{p}$ is the formal group logarithm on 
$A/\Q_{p}$, and $\stackrel{\cdot{}}{=}$ denotes equality up to multiplication by a non-zero element of $L^{\times}$.  
In case $K_{\chi}/\Q$ is quadratic, write as above $(p)=\mathfrak{p}\cdot{}\overline{\mathfrak{p}}$,
and  $\iota_{\mathfrak{p}} : K_{\chi}\subset{}K_{\chi,\mathfrak{p}}\cong{}\Q_{p}$ and  $\iota_{\overline{\mathfrak{p}}} : 
K_{\chi}\subset{}K_{\chi,\overline{\mathfrak{p}}}\cong{}\Q_{p}$
 for the completions of $K$ at $\mathfrak{p}$ and $\overline{\mathfrak{p}}$ respectively. 
Then $\iota_{\overline{\mathfrak{p}}}=\iota_{\mathfrak{p}}\circ{}\tau$, where $\tau$ is the non-trivial element of $\mathrm{Gal}(K_{\chi}/\Q)$.
Since $P_{\chi}^{\dag}\in{}A^{\dag}(K_{\chi})^{\chi}$, we have $P_{\chi}^{\tau}=-P_{\chi}$ and 
$\widetilde{P}_{\chi,\mathfrak{p}}=\widetilde{P}_{\chi,\overline{\mathfrak{p}}}^{-1}$.
As $q_{\chi}:=q_{\mathfrak{p}}-q_{\bar{\mathfrak{p}}}$ (by the definitions), another application of Theorem $\ref{mainPhD}$ allows us to compute
\begin{align*}
       \dia{q_{\chi},P_{\chi}^{\dag}}^{\mathrm{Nek}}_{V_{f},\pi}=
        \dia{q_{\mathfrak{p}},P_{\chi}^{\dag}}^{\mathrm{Nek}}_{V_{f},\pi}-
        \dia{q_{\overline{\mathfrak{p}}},P_{\chi}^{\dag}}^{\mathrm{Nek}}_{V_{f},\pi}
        \stackrel{\cdot{}}{=}\log_{q_{A}}\big(\widetilde{P}_{\chi,\mathfrak{p}}\big)
        -\log_{q_{A}}\big(\widetilde{P}_{\chi,\overline{\mathfrak{p}}}\big)  & \\
        =\log_{A}(\iota_{\mathfrak{p}}(P_{\chi})) -\log_{A}(\iota_{\overline{\mathfrak{p}}}(P_{\chi}))
        =\log_{A}\big(\iota_{\mathfrak{p}}\lri{P_{\chi}-P_{\chi}^{\tau}}\big) &
        =2\cdot{}\log_{A}\lri{P_{\chi}},
\end{align*}
where we write again (with a slight abuse of notation) $\log_{A} : A(K_{\chi})
\stackrel{\iota_{\mathfrak{p}}\ }{\longrightarrow{}}A(\Q_{p})\stackrel{\log_{A}\ }{\longrightarrow{}}\Q_{p}$.

The preceding discussion can be summarised by the following formulae (valid for $\chi$ trivial or quadratic):
\[
         \det\dia{-,-}^{\mathrm{Nek},\chi}_{V_{f},\pi}:=\det\begin{pmatrix} \big<q_{\chi},q_{\chi}\big>^{\mathrm{Nek}}_{V_{f},\pi}  & \big<
         q_{\chi},P_{\chi}^{\dag}\big>^{\mathrm{Nek}}_{V_{f},\pi} \\
                &  \\ \big<P_{\chi}^{\dag},q_{\chi}\big>^{\mathrm{Nek}}_{V_{f},\pi} 
                & \big<P_{\chi}^{\dag},P_{\chi}^{\dag}\big>^{\mathrm{Nek}}_{V_{f},\pi}\end{pmatrix}
                \stackrel{\cdot{}}{=}\det\begin{pmatrix} 0 & \log_{A}(P_{\chi}) \\ & \\ -\log_{A}(P_{\chi}) & 0
                \end{pmatrix}\stackrel{\cdot{}}{=}\log_{A}^{2}(P_{\chi})
\]
(where we used again the fact that $\dia{-,-}^{\mathrm{Nek}}_{V_{f},\pi}$ is skew-symmetric to compute 
$\big<P_{\chi}^{\dag},q_{\chi}\big>^{\mathrm{Nek}}_{V_{f},\pi}=-\big<q_{\chi},P^{\dag}_{\chi}\big>^{\mathrm{Nek}}_{V_{f},\pi}$,
and we wrote as above $\stackrel{\cdot{}}{=}$ to denote equality up to multiplication by a non-zero element in $L^{\times}$). 
Since $P_{\chi}\in{}A(K_{\chi})$ is a point of infinite order, and $\log_{A}$ gives an isomorphism 
between $A(\Q_{p})\otimes\Q_{p}$ and $\Q_{p}$, $\log_{A}(P_{\chi})\not=0$,
so that 
\[
       \det\dia{-,-}^{\mathrm{Nek},\chi}_{V_{f},\pi}\not=0.
\]
Recalling that $q_{\chi}$ and $P_{\chi}^{\dag}$ generate $\exsel^{1}(K_{\chi},V_{f})^{\chi}$
as an $L$-vector space by $(\ref{eq:qfin})$, this implies that $\dia{-,-}^{\mathrm{Nek},\chi}_{V_{f},\pi}$
is non-degenerate, and the last statement of Proposition $\ref{nondeg}$ finally gives
\[
             \mathrm{length}_{\mathfrak{p}_{f}}\Big(X_{\mathrm{Gr}}^{\mathrm{cc}}(\mathbf{f}/K_{\chi})^{\chi}\Big)=
             \dim_{\Q_{p}}\Big(\mathrm{Sel}_{p}(A/K_{\chi})^{\chi}\Big)\stackrel{(\ref{eq:kummera}) \text{\ and\ } (\ref{eq:nuhyp})}{=}
             1.
\]
This means that
$X_{\mathrm{Gr}}^{\mathrm{cc}}(\mathbf{f}/K_{\chi})^{\chi}\otimes_{\I}\Il\cong{}\Il/\mathfrak{p}_{f}\Il$
as $\Il$-modules, as was to be shown. 

\section[Proofs]{Proof of the main result}
This section is entirely devoted to the proof of Theorem A
stated  in the introduction. 

\subsection{An auxiliary imaginary quadratic field}
We will need the following crucial lemma, which follows combining  the main result of \cite{BFH},
\neko's proof of the parity conjecture \cite{Ne}, and the KGZ Theorem.

\begin{lemma}\label{klemma} Let $N_{A}=Np$ be the conductor of $A/\Q$ (with $p\nmid{}N$).
Assume that the following properties hold:
\begin{itemize}
\item[$(a)$] there exists a prime $q\not=p$ such that $q\Vert{}N_{A}$;
\item[$(b)$] $\mathrm{rank}_{\Z}A(\Q)=1$ and $\sha(A/\Q)_{p^{\infty}}$ is finite.
\end{itemize}
Then there exists an imaginary quadratic field $F/\Q$, of discriminant $D_{F}$, satisfying the following properties:
\begin{itemize}
\item[$1.$]  $D_{F}$ is coprime to $6N_{A}$;
\item[$2.$] $q$ (resp., every prime divisor of $N_{A}/q$) is  inert (resp., splits) in $F$;
\item[$3.$] $\mathrm{ord}_{s=1}L(A^{F}/\Q,s)=1;$
\item[$4.$] $\mathrm{rank}_{\Z}A(F)=2$ and $\sha(A/F)_{p^{\infty}}$ is finite.
\end{itemize}
(In $3$: $A^{F}/\Q$ is the $\epsilon_{F}$-twist of $A/\Q$, $\epsilon_{F}$ being the quadratic character of $F$.)
\end{lemma}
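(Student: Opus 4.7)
The plan is to produce $F$ by first fixing the local splitting conditions $1$--$2$, checking that these together with hypothesis $(b)$ force the root number of $A^{F}/\Q$ to be $-1$, and then invoking the non-vanishing theorem of Bump--Friedberg--Hoffstein to realise condition $3$ within this local class. Conditions on ranks and Tate--Shafarevich groups will then follow from Kolyvagin--Gross--Zagier.

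First I would use \neko's proof of the parity conjecture \cite{Ne}: the assumption $\rank_{\Z}A(\Q)=1$ together with the finiteness of $\sha(A/\Q)_{p^{\infty}}$ implies that the corank of the $p^{\infty}$-Selmer group of $A/\Q$ equals $1$, hence is odd, which by \emph{loc. cit.} forces $\mathrm{sign}(A/\Q)=-1$. Next, fix the local conditions on a hypothetical imaginary quadratic field $F/\Q$: $D_{F}$ coprime to $6N_{A}$, the prime $q$ inert in $F$, and every $\ell\mid{}N_{A}/q$ split in $F$. These are open conditions on the quadratic character $\epsilon_{F}$ (they single out a finite union of congruence classes modulo $8\cdot{}N_{A}$), and they are non-empty since they can be prescribed independently at the finitely many places involved. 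The resulting $\epsilon_{F}$ is odd (as $F$ is imaginary), satisfies $\epsilon_{F}(q)=-1$ and $\epsilon_{F}(\ell)=+1$ for $\ell\mid{}N_{A}/q$; using $q\Vert{}N_{A}$ (so $\mathrm{ord}_{q}(N_{A})=1$) the standard formula for the change of root number under a quadratic twist unramified at the conductor gives
\[
   \mathrm{sign}(A^{F}/\Q)=\mathrm{sign}(A/\Q)\cdot{}\epsilon_{F}(-N_{A})=(-1)\cdot{}(-1)\cdot{}(-1)=-1,
\]
so the functional equation of $L(A^{F}/\Q,s)$ forces an odd order zero at $s=1$.

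Now I would apply the main result of Bump--Friedberg--Hoffstein \cite{BFH} (or equivalently the work of Murty--Murty), which in the present formulation states: given the weight-two newform $f$ attached to $A/\Q$ and local conditions on the twist such that the twisted root number is $-1$, there exist infinitely many imaginary quadratic fields $F/\Q$ satisfying those local conditions and such that $L'(A^{F}/\Q,1)\not=0$. Applying this with the local conditions $1$--$2$ above yields an $F$ verifying items $1$, $2$, $3$ of the statement. The Kolyvagin--Gross--Zagier theorem, applied to $A^{F}/\Q$, then gives $\rank_{\Z}A^{F}(\Q)=1$ and the finiteness of $\sha(A^{F}/\Q)$; in particular $\sha(A^{F}/\Q)_{p^{\infty}}$ is finite.

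Finally, to upgrade to condition $4$, one uses the $\mathrm{Gal}(F/\Q)$-equivariant decomposition
$A(F)\otimes\Q=\big(A(\Q)\otimes\Q\big)\oplus{}\big(A^{F}(\Q)\otimes\Q\big)$,
which combined with hypothesis $(b)$ and the conclusion of KGZ for $A^{F}$ yields $\rank_{\Z}A(F)=1+1=2$. Likewise the restriction-corestriction exact sequence shows that the kernel and cokernel of the natural map $\sha(A/\Q)\oplus\sha(A^{F}/\Q)\to{}\sha(A/F)$ are annihilated by $2$, hence (as $p$ is odd) $\sha(A/F)_{p^{\infty}}$ is finite precisely because both $\sha(A/\Q)_{p^{\infty}}$ and $\sha(A^{F}/\Q)_{p^{\infty}}$ are. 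The only genuinely non-formal input is the BFH non-vanishing statement with prescribed local conditions; the main point of the argument is to verify that the imposed splitting behaviour in items $1$--$2$ is \emph{compatible} with $\mathrm{sign}(A^{F}/\Q)=-1$, which is exactly the root-number computation above.
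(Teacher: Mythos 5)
Your proof is correct and follows the same overall strategy as the paper: deduce $\mathrm{sign}(A/\Q)=-1$ from Nekov\'a\v{r}'s parity theorem, invoke a Bump--Friedberg--Hoffstein non-vanishing theorem to produce an imaginary $F$ with the prescribed splitting behaviour and $\mathrm{ord}_{s=1}L(A^{F}/\Q,s)=1$, feed this into Kolyvagin--Gross--Zagier, and combine with hypothesis $(b)$ via the $\mathrm{Gal}(F/\Q)$-decomposition over $F$. The one substantive difference is in how BFH is applied. You prescribe all local conditions on $\epsilon_{F}$ at once (in particular the \emph{nontrivial} condition $\epsilon_{F}(q)=-1$ at the prime $q\Vert N_{A}$) and invoke the non-vanishing theorem directly for $A$, which has root number $-1$. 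The paper instead factors $\epsilon_{F}=\chi\psi$: it first chooses an \emph{even} auxiliary character $\chi$ of conductor coprime to $6N_{A}$ with $\chi(q)=-1$ and $\chi(\ell)=+1$ for $\ell\mid N_{A}/q$, so that $A^{\chi}$ has root number $+1$; it then applies BFH to $A^{\chi}$ to find an \emph{odd} $\psi$ that is trivial on every prime of $6c_{\chi}N_{A}$, forcing $\mathrm{sign}(A^{\chi\psi}/\Q)=-1$. This reduces the non-vanishing input to its most classical form (odd twist unramified at, and trivial on, the level of the base form), at the cost of an auxiliary twist, whereas your version requires the stronger formulation of the non-vanishing theorem with arbitrary congruence conditions on the discriminant at primes dividing $N_{A}$. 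Both are legitimate; your root-number computation and the final rank/Tate--Shafarevich bookkeeping over $F$ match the paper's argument exactly.
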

\begin{proof} By condition $(b)$ and  \neko's proof of the parity conjecture \cite[Section 12]{Ne}
$$\mathrm{sign}(A/\Q)=-1$$ (where $\mathrm{sign}(A/\Q)$ denotes the sign in the functional 
equation satisfied by the Hasse-Weil $L$-series $L(A/\Q,s)$).
Let $\chi$ be a quadratic  Dirichlet character of conductor $c_{\chi}$ coprime with $6N_{A}$ such that:
\begin{itemize}
\item[$(\alpha_{\chi})$] $\chi(q)=-1$ and $\chi(\ell)=+1$ for every prime divisor $\ell$ of $N_{A}/q$;
\item[$(\beta_{\chi})$] $\chi(-1)=+1$,
\end{itemize} 
and let $A^{\chi}/\Q$ be the $\chi$-twist of $A/\Q$. As $q\Vert{}N_{A}$, 
we deduce by \cite[Theorem 3.66]{Sh} and the preceding properties
$$\mathrm{sign}(A^{\chi}/\Q)=\chi(-N_{A})\cdot{}\mathrm{sign}(A/\Q)=-\chi(N_{A})=+1.$$
The main result of \cite{BFH}
then guarantees the existence of a quadratic Dirichlet character $\psi$, of conductor coprime with $6c_{\chi}N_{A}$,  such that
\begin{itemize}
\item[$(\alpha_{\psi})$] $\psi(\ell)=+1$ for every prime divisor $\ell$ of $6c_{\chi}N_{A}$;
\item[$(\beta_{\psi})$] $\psi(-1)=-1$;
\item[$(\gamma_{\psi})$] $\mathrm{ord}_{s=1}L(A^{\chi\psi}/\Q,s)=1$.
\end{itemize}
Define $F=F_{\chi\psi}$ as the quadratic field attached to $\chi\psi$, so $\chi\psi=\epsilon_{F}$
and $L(A^{\chi\psi}/\Q,s)=L(A^{F}/\Q,s)$ is the Hasse-Weil $L$-series of the $F$-twist of $A/\Q$. 
In particular, property $3$ in the statement is satisfied. 
By the KGZ theorem, it follows by  $(\gamma_{\psi})$ 
that $A(F)^{\epsilon_{F}}$ has rank one and $\sha(A/F)^{\epsilon_{F}}$ is finite. Together with  $(b)$, this gives
\[
              \mathrm{rank}_{\Z}A(F)=2;\ \ \#\Big(\sha(A/F)_{p^{\infty}}\Big)<\infty,
\]
i.e. property $4$ in the statement. Property $1$ is clear by construction. Moreover, by $(\alpha_{\chi})$ and $(\alpha_{\psi})$ we deduce 
$\epsilon_{F}(-1)=-1$, $\epsilon_{F}(q)=-1$ and $\epsilon_{F}(\ell)=+1$ for every prime divisor of $N_{A}/q$.
This means precisely that $F/\Q$ is an imaginary quadratic field satisfying property $2$ in the statement,
thus concluding the proof.
\end{proof}

\subsection{Proof of Theorem A} Assume that $A/\Q$ and $p>2$ satisfy the  hypotheses listed in Theorem A, i.e.
\begin{itemize}
\item[$(\alpha)$] $\overline{\rho}_{A,p}$ is an irreducible $G_{\Q}$-representation;
\item[$(\beta)$] there exists a prime $q\not=p$ at which $A$ has multiplicative reduction (i.e. $q\Vert{}N_{A}$);
\item[$(\gamma)$] $p\nmid{}\mathrm{ord}_{q}(j_{A})$;
\item[$(\delta)$] $\mathrm{rank}_{\Z}A(\Q)=1$ and $\sha(A/\Q)_{p^{\infty}}$ is finite.
\end{itemize}  
Let $K/\Q$ be a quadratic imaginary field  such that
\begin{itemize}
\item[$(\epsilon)$] $D_{K}$ is coprime with $6N_{A}$;
\item[$(\zeta)$] $q$ is inert in $K$;
\item[$(\eta)$] every prime divisor of $N_{A}/q$ splits in $K$;
\item[$(\theta)$] $\mathrm{rank}_{\Z}A(K)=2$ and $\sha(A/K)_{p^{\infty}}$ is finite;
\item[$(\iota)$] $\mathrm{ord}_{s=1}L(A^{K}/\Q,s)=1.$
\end{itemize}
The existence of such a $K/\Q$ has been proved in Lemma $\ref{klemma}$ above. 
Finally, let $L/\Q_{p}$ be a finite extension containing 
$\Q_p\lri{D_{K}^{1/2},(-1)^{1/2},1^{1/Np}}/\Q_p$, let $q_{K}\nmid{}6p$ be a rational prime which splits in $K$,
and let  $S$ be the set of  primes of $K$
consisting of  all the prime divisors of $q_{K}N_{A}D_{K}$. Then:

\begin{lemma}\label{hypmain} The data $(\mathbf{f},K,p,L,q_{K},S)$ satisfy Hypotheses $\ref{h2}$, $\ref{h3}$ and $\ref{h4}$.
\end{lemma}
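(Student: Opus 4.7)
The plan is to verify each of the three hypotheses in turn, reducing each condition to one of the assumptions $(\alpha)$--$(\iota)$ together with the dictionary relating $\overline{\rho}_{\mathbf{f}}$ to $\overline{\rho}_{A,p}$ recorded in Remark \ref{remirrdist}.

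First, Hypothesis \ref{h2} is immediate: by Remark \ref{remirrdist}, $\overline{\rho}_{\mathbf{f}}$ is isomorphic to the $\mathbf{F}$-base change of $\overline{\rho}_{A,p}$, so the absolute irreducibility assumption $(\alpha)$ transfers. Next, for Hypothesis \ref{h3}, the four bullet points are verified as follows. The coprimality of $D_{K}$ with $6Np=6N_{A}$ is exactly $(\epsilon)$. That $p$ splits in $K$ follows from $(\eta)$: indeed $p$ is a prime divisor of $N_{A}/q=Np/q$ (since $q\neq{}p$ and $q\Vert{}N_{A}$, so $q|N$), and every such divisor is split in $K$ by hypothesis. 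The condition on $L$ holds by the very choice of $L$ made just before the lemma, and the set $S$ was defined to consist exactly of the primes of $K$ above $q_{K}D_{K}Np$.

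For Hypothesis \ref{h4}, I would first determine the factorisation $N=N^{+}N^{-}$ (with $N=N_{A}/p$). By $(\zeta)$, the prime $q$ is inert in $K$; by $(\eta)$, every other prime divisor of $N_{A}/q$ (equivalently, every prime divisor of $N/q$) is split in $K$. Since $q\Vert{}N_{A}$ and $q\neq{}p$, one has $q\Vert{}N$, so the definition of $N^{\pm}$ forces $N^{-}=q$ and $N^{+}=N/q$. In particular $N^{-}$ is square-free and has a single, hence odd, number of prime divisors, verifying the first bullet of Hypothesis \ref{h4}.

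The remaining point, and the only one requiring non-trivial input, is that $\overline{\rho}_{\mathbf{f}}$ is ramified at $N^{-}=q$. Since $\overline{\rho}_{\mathbf{f}}|_{G_{\Q_{q}}}\cong\overline{\rho}_{A,p}|_{G_{\Q_{q}}}\otimes_{\mathbf{F}_{p}}\mathbf{F}$, it suffices to show $\overline{\rho}_{A,p}$ is ramified at $q$. Because $A/\Q_{q}$ has multiplicative reduction, Tate's uniformisation (cf.\ \cite{Tate-2}, or Chapter V of \cite{Sil-2}) gives an isomorphism of $G_{\Q_{q}}$-modules $A(\overline{\Q}_{q})\cong{}\overline{\Q}_{q}^{\times}/q_{A,q}^{\Z}$, where $q_{A,q}\in{}\Q_{q}^{\times}$ is the Tate period, satisfying $\mathrm{ord}_{q}(q_{A,q})=-\mathrm{ord}_{q}(j_{A})$. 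The submodule $A[p]\subset{}A(\overline{\Q}_{q})$ is generated by $\mu_{p}$ and by any $p$-th root $q_{A,q}^{1/p}$, and $\overline{\rho}_{A,p}|_{G_{\Q_{q}}}$ is unramified if and only if $q_{A,q}^{1/p}$ lies in the maximal unramified extension of $\Q_{q}$, equivalently $p\mid{}\mathrm{ord}_{q}(q_{A,q})=-\mathrm{ord}_{q}(j_{A})$. Hypothesis $(\gamma)$ is precisely the negation of this condition, so $\overline{\rho}_{A,p}$ is ramified at $q$, completing the verification of Hypothesis \ref{h4}. The main (and only) technical obstacle is this last Tate-module computation; the rest is bookkeeping.
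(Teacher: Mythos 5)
Your verification matches the paper's proof step for step: Hypothesis \ref{h2} via Remark \ref{remirrdist}, Hypothesis \ref{h3} by unwinding $(\epsilon)$, $(\eta)$ and the choices of $L$ and $S$, the computation $N^{-}=q$, $N^{+}=N_{A}/pq$ from $(\zeta)$ and $(\eta)$, and the Tate-curve computation showing $\overline{\rho}_{A,p}$ is ramified at $q$ precisely when $p\nmid\mathrm{ord}_{q}(j_{A})$. The only minor imprecision is that you assert the Tate uniformisation $A(\overline{\Q}_{q})\cong\overline{\Q}_{q}^{\times}/q_{A,q}^{\Z}$ as $G_{\Q_{q}}$-modules, which requires a quadratic unramified twist if the reduction at $q$ is non-split multiplicative; the paper phrases this as an isomorphism of $I_{\Q_{q}}$-modules (equivalently, works over the quadratic unramified extension), which is the correct level of generality and suffices since ramification is an inertia question.
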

\begin{proof} By construction and properties $(\epsilon)$ and  $(\eta)$, Hypothesis $\ref{h3}$ is satisfied. 
Since $\overline{\rho}_{\mathbf{f}}$ is isomorphic (by definition) to the semi-simplification of $\overline{\rho}_{A,p}$,
assumption $(\alpha)$ 
is nothing but a reformulation of Hypothesis $\ref{h2}$. To prove that Hypothesis $\ref{h4}$ holds true, note that (with the notations of \emph{loc. cit.})
$N^{+}=N_{A}/pq$ and $N^{-}=q$ by $(\zeta)$ and $(\eta)$ above. Then $N^{-}$ is a square-free product of an odd number of primes. 
It thus remains to prove that $\overline{\rho}_{A,p}\cong{}\overline{\rho}_{\mathbf{f}}$ is ramified at $q$. 
By Tate's theory, we know that 
$A/\overline{\Q}_{q}$ is isomorphic to the Tate curve $\mathbb{G}_{m}/t_{q}^{\Z}$
over the quadratic unramified extension of $\Q_{p}$,
where $t_{q}\in{}q\Z_{q}$ is the Tate period of $A/\Q_{q}$,
satisfying $\mathrm{ord}_{q}\lri{t_{q}}=-\mathrm{ord}_{q}(j_{A})$ \cite{Tate-2}, \cite[Chapter V]{Sil-2}. 
Then $$A[p]:=
A(\overline{\Q})[p]\cong{}\left\{t_{q}^{\frac{i}{p}}\cdot{}\zeta_{p}^{j} : 0\leq{}i,j<p
\right\}$$
as $I_{\Q_{q}}$-modules, where $t_{q}^{1/p}\in{}\overline{\Q}_{q}$ and $\zeta_{p}\in\overline{\Q}_{q}$ 
are fixed primitive $p$th roots of $t_{q}$ and $1$ respectively.   
As $\Q_{q}(\zeta_{p})/\Q_{q}$ is unramified, $\overline{\rho}_{A,p}$ is ramified at $q$
precisely if $\Q_{q}(t_{q}^{1/p})/\Q_{q}$ is ramified. Recalling that 
$t_{q}\in{}q\Z_{q}$ and $\mathrm{ord}_{q}(t_{q})=-\mathrm{ord}_{q}(j_{A})$,
this is the case if and only if $p\nmid{}\mathrm{ord}_{q}(j_{A})$. Then Hypothesis $\ref{h4}$ follows from $(\gamma)$.
\end{proof} 

In order to prove Theorem A, we need one more simple lemma. Omitting $S$ from the notations, recall the 
dual Selmer groups 
$X^{\mathrm{cc}}_{\Q_{\infty}}(\mathbf{f}/K):=X_{\Q_{\infty}}^{S,\mathrm{cc}}(\mathbf{f}/K)$
and $X_{\mathrm{Gr}}^{\mathrm{cc}}(\mathbf{f}/K)$
introduced in Sections $\ref{criticalgrsec}$ and $\ref{mainsec}$ respectively.

\begin{lemma}\label{compgr} $\mathrm{length}_{\mathfrak{p}_{f}}\Big(X_{\Q_{\infty}}^{\mathrm{cc}}(\mathbf{f}/K)\Big)\leq{}
\mathrm{length}_{\mathfrak{p}_{f}}\Big(X^{\mathrm{cc}}_{\mathrm{Gr}}(\mathbf{f}/K)\Big)+2$.
\end{lemma}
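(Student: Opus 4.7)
The plan is to directly compare the two Selmer groups via their local conditions at primes $v\mid p$.  Both $\mathrm{Sel}_{\mathrm{Gr}}^{\mathrm{cc}}(\mathbf{f}/K)$ and $\mathrm{Sel}_{\Q_\infty}^{S,\mathrm{cc}}(\mathbf{f}/K)$ sit inside $H^{1}(G_{K,S},\mathbb{T}_{\mathbf{f}}\otimes_\I \I^{\ast})$ and differ only in that the strict group requires the localisation at each $v\mid p$ to vanish in $H^{1}(K_v,\mathbb{T}_{\mathbf{f},v}^{-}\otimes_\I \I^{\ast})$, while the non-strict group only requires its image to vanish in $H^{1}(I_v,\mathbb{T}_{\mathbf{f},v}^{-}\otimes_\I \I^{\ast})$.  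Therefore $\mathrm{Sel}_{\mathrm{Gr}}^{\mathrm{cc}}(\mathbf{f}/K)\subseteq \mathrm{Sel}_{\Q_\infty}^{S,\mathrm{cc}}(\mathbf{f}/K)$, with quotient injecting into $\bigoplus_{v\mid p}H^{1}_{\mathrm{ur}}(K_v,\mathbb{T}_{\mathbf{f},v}^{-}\otimes_\I \I^{\ast})$, where $H^{1}_{\mathrm{ur}}(K_v,-):=\ker\bigl(H^{1}(K_v,-)\to H^{1}(I_v,-)\bigr)$.  Dualising gives a surjection $X_{\Q_\infty}^{S,\mathrm{cc}}(\mathbf{f}/K)\twoheadrightarrow X_{\mathrm{Gr}}^{\mathrm{cc}}(\mathbf{f}/K)$ whose kernel is a quotient of $\bigoplus_{v\mid p}H^{1}_{\mathrm{ur}}(K_v,\mathbb{T}_{\mathbf{f},v}^{-}\otimes_\I \I^{\ast})^{\vee}$; since $p$ splits in $K$ by Hypothesis \ref{h3} the sum runs over exactly two primes, so by additivity of length it suffices to show
\[
\mathrm{length}_{\mathfrak{p}_{f}}\Bigl(H^{1}_{\mathrm{ur}}(K_v,\mathbb{T}_{\mathbf{f},v}^{-}\otimes_\I \I^{\ast})^{\vee}\Bigr)\leq 1
\]
for each $v\mid p$.

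Next I compute the local cohomology using the explicit description of Section \ref{ppq}: as $\I[G_v]$-modules, $\mathbb{T}_{\mathbf{f},v}^{-}\cong \I(\mathbf{a}_p^{\ast}\cdot [\chi_{\mathrm{cy}}]^{-1/2})$, and the restriction of $[\chi_{\mathrm{cy}}]^{-1/2}$ to $I_v$ factors as $I_v\twoheadrightarrow\Gamma\xrightarrow{-1/2}\Gamma\xrightarrow{[\cdot]}\I^{\times}$.  The image of this character shifted by $-1$ generates $(\varpi_{\mathrm{wt}})\subset \I$ up to a unit: indeed $[\gamma_{\mathrm{wt}}^{-1/2}]+1\in \I^{\times}$ (its image under $\phi_{f}$ is $2\neq 0$), and $([\gamma_{\mathrm{wt}}^{-1/2}]+1)([\gamma_{\mathrm{wt}}^{-1/2}]-1)=[\gamma_{\mathrm{wt}}]^{-1}-1$ is a unit multiple of $\varpi_{\mathrm{wt}}$.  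Consequently $(\mathbb{T}_{\mathbf{f},v}^{-}\otimes_\I \I^{\ast})^{I_v}$ identifies with the Pontryagin dual of $\I/\varpi_{\mathrm{wt}}\I$, carrying a residual Frobenius action by $\alpha_{v}:=(\mathbf{a}_p^{\ast}\cdot [\chi_{\mathrm{cy}}]^{-1/2})(\mathrm{Frob}_{v})\in \I^{\times}$; computing $H^{1}_{\mathrm{ur}}$ as Frobenius coinvariants and then dualising yields the canonical isomorphism of $\I$-modules
\[
H^{1}_{\mathrm{ur}}(K_v,\mathbb{T}_{\mathbf{f},v}^{-}\otimes_\I \I^{\ast})^{\vee}\cong (\I/\varpi_{\mathrm{wt}}\I)[\alpha_{v}-1].
\]

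The crucial step is then to show that $\alpha_{v}-1\in \mathfrak{p}_{f}$.  This is precisely the algebraic manifestation of split multiplicative reduction at $p$: on the one hand, $a_{p}(f)=+1$ forces $\phi_{f}(\mathbf{a}_p^{\ast}(\mathrm{Frob}_{v}))=\phi_{f}(\mathbf{a}_p)=1$; on the other hand, since $\phi_{f}$ is trivial on the image of $\Gamma$ in $\I$ (see Section \ref{hifa}) and the character $[\chi_{\mathrm{cy}}]^{-1/2}$ is by construction valued in $[\Gamma]$, we get $\phi_{f}([\chi_{\mathrm{cy}}]^{-1/2}(\mathrm{Frob}_{v}))=1$ as well.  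Hence $\phi_{f}(\alpha_{v})=1$, i.e.\ $\alpha_{v}-1\in\mathfrak{p}_{f}$.  After localising at $\mathfrak{p}_{f}$ (where $\varpi_{\mathrm{wt}}$ is a uniformiser by $(\ref{eq:unif})$), multiplication by $\alpha_{v}-1$ acts as zero on $(\I/\varpi_{\mathrm{wt}}\I)_{\mathfrak{p}_{f}}=L$, so $(\I/\varpi_{\mathrm{wt}}\I)_{\mathfrak{p}_{f}}[\alpha_{v}-1]=L$, of length one over $\I_{\mathfrak{p}_{f}}$; summing over the two primes $v\mid p$ yields the required bound.

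The main obstacle is precisely this last calculation $\alpha_{v}-1\in\mathfrak{p}_{f}$: it is the algebraic shadow of the \emph{exceptional zero} at $p$ arising from the split multiplicative reduction hypothesis ($a_{p}=+1$).  It is this vanishing that produces the additive constant $+2$ in the inequality, perfectly mirroring the order-$2$ exceptional zeros exhibited by both $L_{p}(f_\infty,\chi_{\mathrm{triv}},k,s)$ and $L_{p}(f_\infty,\epsilon_{K},k,s)$ at $(k,s)=(2,1)$ on the analytic side (cf.\ Corollary \ref{corbdmain}).
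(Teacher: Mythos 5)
Your proof is correct and follows essentially the same route as the paper: both compare the two Selmer groups via the inflation-restriction exact sequence, identify the error term at each $v\mid p$ with (the dual of) the unramified cohomology $H^{1}(\mathrm{Frob}_v,(\mathbb{A}_{\mathbf{f}}^-)^{I_v})$, compute the $I_v$-invariants as the $\varpi_{\mathrm{wt}}$-torsion of $\I^{\ast}$ twisted by the unramified part, and then exploit $\phi_f(\mathbf{a}_p)=a_p(A)=1$ (the algebraic avatar of the split multiplicative exceptional zero) to get length $1$ per prime and hence $+2$ in total since $p$ splits in $K$. The only genuine difference is cosmetic: you verify carefully that $[\gamma_{\mathrm{wt}}^{-1/2}]-1$ generates $\varpi_{\mathrm{wt}}\I$ (via the factorisation $([\gamma_{\mathrm{wt}}^{-1/2}]+1)([\gamma_{\mathrm{wt}}^{-1/2}]-1)=[\gamma_{\mathrm{wt}}]^{-1}-1$ and the observation that the first factor maps to $2\in L^{\times}$ under $\phi_f$), a point the paper passes over with the looser remark that $[\rho]\equiv 1\bmod\varpi$.
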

\begin{proof} 
As remarked in the proof of Lemma $\ref{lalaf}$, the perfect, skew-symmetric duality $\pi : \ppq\otimes_{\I}\ppq\fre{}\I(1)$
induces a natural isomorphism of $\I[G_{\Q_{p}}]$-modules:
$\mathbb{T}^{-}_{\mathbf{f}}\otimes_{\I}\I^{\ast}\cong{}\Hom{\mathrm{cont}}(
\ppq^{+},\mu_{p^{\infty}})=:\mathbb{A}_{\mathbf{f}}^{-}$. By construction and the inflation-restriction sequence, 
there is  then an exact sequence
\[
           0\fre{}\mathrm{Sel}^{\mathrm{cc}}_{\mathrm{Gr}}(\mathbf{f}/K)\fre{}\mathrm{Sel}^{\mathrm{cc}}_{\Q_{\infty}}(\mathbf{f}/K)
           \fre{}
           \bigoplus_{v|p}H^{1}\lri{\mathrm{Frob}_{v},\big(\mathbb{A}_{\mathbf{f}}^{-}\big)^{I_{v}}},
\]
where $I_{v}:=I_{K_{v}}$ is the inertia subgroup of $G_{K_{v}}$,
$\mathrm{Frob}_{v}\in{}G_{K_{v}}/I_{K_{v}}$ is the arithmetic Frobenius at $v$,
%, $\mathbb{A}_{\mathbf{f},v}^{-}:=\mathbb{A}_{\mathbf{f}}^{-}$
%as a $G_{\Q_{p}}$-module  (see Section $\ref{sc}$ for details), 
and we write for simplicity 
$H^{\ast}(\mathrm{Frob}_{v},-):=H^{\ast}(G_{K_{v}}/I_{K_{v}},-)$. 
(Here the reference to the fixed set $S$ is again omitted, so 
that $\mathrm{Sel}_{\Q_{\infty}}^{\mathrm{cc}}(\mathbf{f}/K)
:=\mathrm{Sel}_{\Q_{\infty}}^{S,\mathrm{cc}}(\mathbf{f}/K)$.)
Taking  Pontrjagin duals and then localising at $\mathfrak{p}_{f}$ gives 
an exact sequence of $\Il$-modules:
\[
              \bigoplus_{v|p}H^{1}\lri{\mathrm{Frob}_{v},\big(\mathbb{A}_{\mathbf{f}}^{-}\big)^{I_{v}}}^{\ast}_{\mathfrak{p}_{f}}
              \fre{}X_{\Q_{\infty}}^{\mathrm{cc}}(\mathbf{f}/K)_{\mathfrak{p}_{f}}\fre{}
              X^{\mathrm{cc}}_{\mathrm{Gr}}(\mathbf{f}/K)_{\mathfrak{p}_{f}}\fre{}0,
\] 
where $\lri{-}^{\ast}_{\mathfrak{p}_{f}}$ is an abbreviation for $\lri{(-)^{\ast}}_{\mathfrak{p}_{f}}=(-)^{\ast}\otimes_{\I}\Il$.
As $p$ splits in $K$, %and $\mathbb{A}_{\mathbf{f}}^{-}$ is a $G_{\Q_{p}}$-module, this gives
one deduces
\begin{equation}\label{eq:niuy}
         \mathrm{length}_{\mathfrak{p}_{f}}\Big(X_{\Q_{\infty}}^{\mathrm{cc}}(\mathbf{f}/K)\Big)\leq{}
         \mathrm{length}_{\mathfrak{p}_{f}}\Big(X_{\mathrm{Gr}}^{\mathrm{cc}}(\mathbf{f}/K)\Big)+
         2\cdot{}\mathrm{length}_{\mathfrak{p}_{f}}\Big(H^{1}\lri{\mathrm{Frob}_{p},\big(\mathbb{A}_{\mathbf{f}}^{-}\big)^{I_{p}}}^{\ast}\Big),
\end{equation}
where $I_{p}:=I_{\Q_{p}}\subset{}G_{\Q_{p}}$ is the inertia subgroup and $\mathrm{Frob}_{p}\in{}G_{\Q_{p}}/I_{\Q_{p}}$
is the arithmetic Frobenius at $p$. 

By equation $(\ref{eq:addequation})$, $\mathbb{T}_{\mathbf{f}}^{+}\cong{}\I\lri{\big(\mathbf{a}^{\ast}_{p}\big)^{-1}\cdot{}\chi_{\mathrm{cy}}\cdot{}[\chi_{\mathrm{cy}}]^{1/2}   }$
as  $G_{\Q_{p}}$-modules. Then its Kummer dual $\mathbb{A}_{\mathbf{f}}^{-}$ is isomorphic to
$\I^{\ast}\lri{\mathbf{a}_{p}^{\ast}\cdot{}[\chi_{\mathrm{cy}}]^{-1/2}}$.
Let $\gamma\in{}1+p\Z_{p}$ be a topological generator, let $[\gamma]\in{}\I$ be its image under the structural morphism 
$[\cdot{}] : \iw\fre{}\I$, and
let $\varpi=[\gamma]-1\in{}\iw$. 
%and let $\sigma_{\gamma}\in{}I_{p}$ be an element such that $\chi_{\mathrm{cy}}(\sigma_{\gamma})^{-1/2}=\gamma$.
Since $\mathbf{a}_{p}^{\ast}$ is an unramified character 
and $[\rho]\equiv{}1\ \mathrm{mod}\ \varpi$ for every $\rho\in{}1+p\Z_{p}$, one has isomorphisms 
of $\mathrm{Frob}_{p}$-modules
%and $\mathbb{A}_{\mathbf{f}}^{-}=\lri{\mathbb{T}_{\mathbf{f}}^{+}(-1)}^{\ast}$we have:
\begin{equation}\label{eq:jjji}
             H^{0}(I_{p},\mathbb{A}_{\mathbf{f}}^{-})=
             %\big(\mathbb{A}_{\mathbf{f}}^{-}\big)^{\sigma_{\gamma}=1}=
             \mathbb{A}_{\mathbf{f}}^{-}[\varpi]\cong{}
             \lri{\I/\varpi\I}^{\ast}(\mathbf{a}_{p}^{\ast}).
             %\lri{\mathbb{T}_{\mathbf{f}}^{+}(-1)}^{\ast}[\varpi]
             %=\lri{\mathbb{T}_{\mathbf{f}}^{+}(-1)\otimes_{\I}\I/\varpi\I}^{\ast}.
\end{equation}
%Noting that , we deduce that
%the quotient $\mathbb{T}_{\mathbf{f}}^{+}(-1)/\varpi\mathbb{T}_{\mathbf{f}}^{+}(-1)$ is an
%unramified  $\I/\varpi\I[G_{\Q_{p}}]$-module, free of rank one over $\I/\varpi\I$, on which $\mathrm{Frob}_{p}$
%acts via multiplication by $\mathbf{a}_{p}^{-1}$. In other words, 
Applying $H^{1}(\mathrm{Frob}_{p},-)$ to $(\ref{eq:jjji})$ then yields
$H^{1}\lri{\mathrm{Frob}_{p},\big(\mathbb{A}_{\mathbf{f}}^{-}\big)^{I_{p}}}=
\lri{\frac{\I}{\varpi\cdot{}\I}}^{\ast}/\lri{\mathbf{a}_{p}-1}\lri{\frac{\I}{\varpi\cdot{}\I}}^{\ast}$.
Taking the Pontrjagin duals and then localising at $\mathfrak{p}_{f}$ one deduces
\begin{equation}\label{eq:hghghg}
          H^{1}\lri{\mathrm{Frob}_{p},\big(\mathbb{A}_{\mathbf{f}}^{-}\big)^{I_{p}}}^{\ast}_{\mathfrak{p}_{f}}\cong{}
          \lri{    \lri{\frac{\I}{\varpi\cdot{}\I}}^{\ast\ast}[\mathbf{a}_{p}-1]   }_{\mathfrak{p}_{f}}
          \cong{}\lri{\frac{\Il}{\varpi\cdot{}\Il}}[\phi_{f}(\mathbf{a}_{p})-1]=\Il/\mathfrak{p}_{f}\Il.
\end{equation}
Indeed, as remarked in $(\ref{eq:unif})$, $\varpi$ %(which is equal to $\varpi_{\mathrm{wt}}$ up to a unit in $\iw$) 
is a uniformiser of $\Il$.
Moreover, $\mathfrak{p}_{f}:=\ker(\phi_{f})$ and $\phi_{f}(\mathbf{a}_{p})=a_{p}(2)=+1$
(as $A/\Q_{p}$ is split multiplicative), so that $\mathbf{a}_{p}-1$ acts trivially on
$\Il/\mathfrak{p}_{f}\Il$ and $(\ref{eq:hghghg})$ follows. In particular,
$(\ref{eq:hghghg})$ yields
\[
               \mathrm{length}_{\mathfrak{p}_{f}}\Big(H^{1}\lri{\mathrm{Frob}_{p},\big(\mathbb{A}_{\mathbf{f}}^{-}\big)^{I_{p}}}^{\ast}\Big)=1.
\]
Together with  equation $(\ref{eq:niuy})$, this  concludes the proof of the lemma. 
\end{proof}

We can finally conclude the proof of Theorem A. To be short, we have
\begin{equation}\label{eq:<=}
         4\stackrel{\mathrm{Cor.}\ \ref{corbdmain}}{\leq{}}
         \mathrm{ord}_{k=2}L_{p}^{\mathrm{cc}}(f_{\infty}/K,k)\stackrel{\mathrm{Cor.}\ \ref{corsumain}}{\leq{}}\mathrm{length}_{\mathfrak{p}_{f}}
         \Big(X_{\Q_{\infty}}^{\mathrm{cc}}(\mathbf{f}/K)\Big)\stackrel{\mathrm{Lemma}\ \ref{compgr}}{\leq}
         \mathrm{length}_{\mathfrak{p}_{f}}\Big(X_{\mathrm{Gr}}^{\mathrm{cc}}(\mathbf{f}/K)\Big)+2
         \stackrel{\mathrm{Th.}\ \ref{bchar}}{=}4.
\end{equation}
Indeed, hypothesis $(\delta)$ gives $\mathrm{dim}_{\Q_{p}}\mathrm{Sel}_{p}(A/\Q)=1$,
and then (as in the proof of Lemma $\ref{klemma}$) \neko's proof of the parity conjecture guarantees that $\mathrm{sign}(A/\Q)=-1$.
Together with Lemma $\ref{hypmain}$, this implies that the hypotheses of Corollary $\ref{corbdmain}$
are satisfied, and then that the first inequality in $(\ref{eq:<=})$ holds true.
%Moreover, $\epsilon_{K}(p)=+1$ (i.e. $p$ splits in $K$) by $(\eta)$ and $\epsilon_{K}(-1)=-1$ since $K/\Q$
%is imaginary, so that
%$\epsilon_{K}(-N)=\epsilon_{K}(-1)\cdot{}\epsilon_{K}(N)=-\epsilon_{K}(q)=+1$ by $(\zeta)$ and $(\eta)$.
%This means that the hypotheses of Bertolini-Darmon's Corollary $\ref{corbdmain}$ are satisfied.
%The first inequality in $(\ref{eq:<=})$ then follows by (the easy part of) Corollary $\ref{corbdmain}$ 
%\footnote{Note that this is a simple consequence of the presence of an exceptional zero for both the $p$-adic $L$-functions 
%of $f$ and $f\otimes\epsilon_{K}$.}. 
Lemma  $\ref{hypmain}$ also allows us to  apply Skinner-Urban's  Corollary $\ref{corsumain}$,
which gives the second inequality in $(\ref{eq:<=})$. The third inequality in $(\ref{eq:<=})$ is the content 
of the preceding lemma.
Finally, let $\chi$ denote either the trivial character or the quadratic character $\epsilon_{K}$ of $K$,
and let $K_{\chi}:=\Q$ or $K_{\chi}:=K$ accordingly. Then
$(\delta)$ and $(\theta)$ imply that (with the notations of Section $\ref{mainsec}$)
\[
             \mathrm{rank}_{\Z}A(K_{\chi})^{\chi}=1;\ \ \#\Big(\sha(A/K_{\chi})_{p^{\infty}}^{\chi}\Big)<\infty.
\]
Moreover, we know that $p$ splits in $K_{\chi}$ (i.e. in $K$, by hypothesis $(\eta)$). Then the hypotheses 
$(i)$, $(ii)$ and $(iii)$ of Theorem $\ref{bchar}$ are
satisfied by both our $\chi$'s, and by applying the  theorem twice yields
\[
               X_{\mathrm{Gr}}^{\mathrm{cc}}(\mathbf{f}/K)_{\mathfrak{p}_{f}}\cong{}X_{\mathrm{Gr}}^{\mathrm{cc}}(\mathbf{f}/\Q)
               _{\mathfrak{p}_{f}}\oplus
               X_{\mathrm{Gr}}^{\mathrm{cc}}(\mathbf{f}/K)^{\epsilon_{K}}_{\mathfrak{p}_{f}}
               \cong{}\Il/\mathfrak{p}_{f}\Il\oplus{}\Il/\mathfrak{p}_{f}\Il
\]  
\footnote{For the first isomorphism, we decomposed $X_{\mathrm{Gr}}^{\mathrm{cc}}(\mathbf{f}/K)$ into its 
`$+$ and $-$' components for the action 
of $\mathrm{Gal}(K/\Q)$, and used the  fact that the $+$-part is naturally isomorphic to 
$X_{\mathrm{Gr}}^{\mathrm{cc}}(\mathbf{f}/\Q)$  under the $K/\Q$-restriction map.},
justifying the last equality in $(\ref{eq:<=})$.

Equation $(\ref{eq:<=})$ proves that  $\mathrm{ord}_{k=2}L_{p}^{\mathrm{cc}}(f_{\infty}/K,k)=4$. 
It then follows by Bertolini-Darmon's Corollary $\ref{corbdmain}$ that 
the Hasse-Weil $L$-function of $A/K$ has a double zero at $s=1$:
\[
       \mathrm{ord}_{s=1}L(A/K,s)=2.
\]
Since $L(A/K,s)=L(A/\Q,s)\cdot{}L(A^{K}/\Q,s)$ is  the product of the Hasse-Weil $L$-functions 
of $A/\Q$ and its $K$-twist $A^{K}/\Q$, and since
$L(A^{K}/\Q,s)$ has a simple zero at $s=1$ by $(\iota)$ above, we finally deduce
$$\mathrm{ord}_{s=1}L(A/\Q,s)=1.$$

\bibliography{myref1}{}
\bibliographystyle{alpha}

\ \\

\end{document}